\newtheorem{thm}{Theorem}[section]
\newtheorem{prop}[thm]{Proposition}
\newtheorem{lem}[thm]{Lemma}
\newtheorem{rem}[thm]{Remark}
\newtheorem{definition}[thm]{Definition}
\newcommand{\C}{\mathbb C}
\newcommand{\R}{\mathbb R}
\newcommand{\HH}{{\mathbb H}^2}
\renewcommand{\leq}{\leqslant}
\renewcommand{\geq}{\geqslant}
\renewcommand{\epsilon}{\varepsilon}
\renewcommand{\phi}{\varphi}
\title{Bounded ideal triangulations of infinite Riemann surfaces}
\author{Dragomir \v Sari\' c}
\address{DS: Mathematics PhD. Program, Graduate Center of the City University of New York, 365 Fifth Avenue, New York, NY 10016-4309}
\address{DS: Department of Mathematics, Queens College of the City University of New York, 65--30 Kissena Blvd., Flushing, NY 11367}
\email{Dragomir.Saric@qc.cuny.edu}
\author{Casey Whitney}
\address{CW:  Mathematics PhD. Program, Graduate Center of the City University of New York, 365 Fifth Avenue, New York, NY 10016-4309}
\email{}
\thanks{}
\thanks{The second author was partially supported by a PSC-CUNY research grant.}
\date{\today}
\subjclass{}
\keywords{}
\begin{document}
	
\begin{abstract}
We introduce a notion of  a bounded ideal triangulation of an infinite Riemann surface  and  parametrize Teichm\"uller spaces of infinite surfaces which allow bounded triangulations. We prove that our parametrization is real-analytic. Riemann surfaces with bounded geometry and countably many punctures belong to the class of surfaces with bounded ideal triangulations. In comparison, the Fenchel-Nielsen parametrization for surfaces with bounded geometry is not known, while the Fenchel-Nielsen parametrization for surfaces with bounded pants decompositions is known as a homeomorphism but it is not known whether it is real-analytic
\end{abstract}

\maketitle

\section{Introduction}

Any marked Riemann surface structure on a compact topological surface $S$ of genus at least two is uniquely determined by the length and the twist parameters on each cuff of the pants decomposition-defining the Fenchel-Nielsen parametrization of the Teichm\"uller space $T(S)$. The Fenchel-Nielsen parametrization of $T(S)$ is real-analytic. 

Shiga \cite{Shiga} introduced a (topological) parametrization of the Teichm\"uller space $T(X)$ of an infinite Riemann surface $X$ using the Fenchel-Nielsen coordinates in the case when $X$ has a bounded pants decomposition. This parametrization of the Teichm\"uller space $T(X)$ was extended to the case when the surface $X$ has an upper bounded geodesic pants decomposition with a sequence of cuff lengths going to zero by Alessandrini, Liu, Papadopoulos, and Su \cite{ALPS}. 
It turns out that when a Riemann surface has only unbounded geodesic pants decomposition, the exact Fenchel-Nielsen coordinates of the Teichm\"uller space are not known. This has to do with the fact that cuffs with large lengths have thin collars that can bunch together. 

Arguably, the most natural class of infinite Riemann surfaces are those with bounded geometry.  While Riemann surfaces with bounded pants decompositions are of bounded geometry,  Kinjo \cite{Kinjo} and also Basmajian, Parlier, and Vlamis \cite{BPV} gave examples of surfaces with bounded geometry but no bounded geodesic pants decompositions.

Thurston \cite{Thurston}, Penner \cite{Penner} and Bonahon \cite{Bonahon} introduced another way of parametrizing the Teichm\"uller space of compact (or finite area) Riemann surfaces by shears on a fixed ideal geodesic triangulation of the surface. McLeay and Parlier \cite{MP} proved that any infinite Riemann surface has an ideal triangulation. The shear parametrization was first considered for the universal Teichm\"uller space by Penner \cite{Penner}, and subsequently in \cite{Saric10,Saric21,PS,SWW}. In this paper, we study the shear parametrization of Teichm\"uller spaces of infinite Riemann surfaces.

An ideal geodesic triangulation of an infinite Riemann surface $X$ is {\it bounded} if it is locally finite, every edge has both of its endpoints at punctures of $X$, a uniform constant bounds the number of incident edges at each puncture, and the shears have finite $\ell^{\infty}$-norm. We show that the Riemann surfaces in \cite{Kinjo,BPV} have bounded ideal geodesic triangulations (these surfaces are flute surfaces. i.e., planar surfaces with countably many punctures and a single topological end.) The surfaces in \cite{Kinjo} are not parabolic, while we prove that the surfaces in \cite{BPV} are parabolic. However, both classes of surfaces have bounded geometry without having a bounded geodesic pants decomposition. More generally, the class of surfaces $X$ with bounded ideal geodesic triangulations are not limited to planar surfaces or surfaces with only one topological end. Such surfaces can have an arbitrary genus (including infinite) and an arbitrary space of topological ends (including the Cantor set).

Let $G$ be the index two orientation-preserving subgroup of the $(2,4,8)$ hyperbolic triangle group acting on the upper half-plane $\mathbb{H}^2$, and let $P$ be the fundamental region for the action of $G$. By deleting vertices of $P$ along with three points in $P$ close to the vertices of $P$ and propagating the deletion under the group $G$ in $\mathbb{H}^2$, one obtains a new surface $X$ that is planar and has infinitely many punctures that accumulate to one infinite end (see \cite[Figure 1]{Kinjo}). Kinjo \cite{Kinjo} proved that any geodesic pants decomposition of $X$ has cuff lengths going to infinity. 
We establish 

\begin{prop}
\label{prop:triangulation-kinjo}
The Riemann surface $X$ obtained by erasing points in the upper half-plane $\mathbb{H}^2$ along the orbit of the index two orientation-preserving subgroup of the $(2,4,8)$ hyperbolic triangle group is not parabolic and has a bounded ideal triangulation. 
\end{prop}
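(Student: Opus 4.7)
Since $X=\mathbb{H}^2\setminus E$, where $E$ denotes the discrete $G$-orbit of the deleted points, $X$ is a plane domain and its complement in $\hat{\C}$ contains the closed lower half-plane, which has positive logarithmic capacity. By the Ahlfors--Sario criterion for plane domains, $X$ is of class $O_G$ (parabolic) if and only if $\hat{\C}\setminus X$ is polar; since this fails, $X$ admits a Green's function and is non-parabolic.

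\textbf{A $G$-equivariant combinatorial triangulation.} The $(2,4,8)$ triangle group is cocompact ($\tfrac{1}{2}+\tfrac{1}{4}+\tfrac{1}{8}<1$), so its orientation-preserving index-two subgroup $G$ has a fundamental region $P$ of finite hyperbolic area whose finitely many vertices lie in $E$. Inside $P$, draw a finite set of hyperbolic arcs joining each interior puncture $p_j$ to the nearby vertex punctures of $P$ and, if necessary, to the other $p_k$, chosen symmetrically with respect to the side-pairings of $P$ so that the complementary regions in $P$ are hyperbolic triangles with all three vertices in $E$. Propagating this finite arc system under $G$ gives a locally finite, $G$-invariant triangulation $\T_0$ of $\mathbb{H}^2$ by hyperbolic triangles whose vertex set is exactly $E$.

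\textbf{Straightening and shear bounds.} Let $g_X$ denote the complete hyperbolic metric in the conformal class of $X$. Each edge of $\T_0$ is a properly embedded arc in $X$ whose two ends escape to the cusps of $(X,g_X)$ corresponding to its endpoints in $E$, so it is homotopic, rel ends, to a unique $g_X$-geodesic running between those two cusps. Straightening every edge of $\T_0$ produces an ideal geodesic triangulation $\T$ of $X$. Because $G$ preserves the complex structure on $X$, it acts on $(X,g_X)$ by isometries, and by uniqueness of the straightening it permutes the edges of $\T$; hence the shear of any edge depends only on its $G$-orbit. Cocompactness of $G$ implies that $\T$ has only finitely many $G$-orbits of vertices, edges and triangles, which at once yields local finiteness, a uniform bound on the valence at each puncture, and a shear function assuming only finitely many values. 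In particular the shears have finite $\ell^\infty$-norm, so $\T$ is bounded.

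\textbf{Where the work lies.} Non-parabolicity is a one-line consequence of $X\subset\mathbb{H}^2$. The substantive point is the explicit construction in the previous paragraph: one must exhibit an arc system in $P$ whose $G$-translates match compatibly across the side-pairings and whose complementary pieces in $\mathbb{H}^2$ are genuinely triangles rather than higher-sided polygons. Once such a $G$-equivariant combinatorial triangulation is in hand --- the freedom to place each $p_j$ close enough to the corresponding vertex in the highly symmetric $(2,4,8)$ pattern leaves ample room for this --- the remaining conclusions (geodesic realization, local finiteness, bounded valence, bounded shears) follow formally from cocompactness of $G$, with no delicate estimate on the uniformizing metric of $X$.
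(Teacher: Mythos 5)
Your non-parabolicity argument is correct and coincides in substance with the paper's: the paper exhibits the Green's function $\log\bigl|\frac{z-i}{z+i}\bigr|$ of $\mathbb{H}^2$ restricted to $X$, while you invoke the positive logarithmic capacity of $\hat{\mathbb{C}}\setminus X$; these are interchangeable. Your reduction of the shear bound and the valence bound to ``finitely many $G$-orbits of vertices, edges and triangles'' is also sound, and it is exactly how the paper concludes boundedness \emph{once the triangulation is in hand}.

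The gap is that the existence of the $G$-equivariant ideal triangulation --- which you yourself flag as ``where the work lies'' --- is asserted rather than constructed, and this construction is precisely the content of the paper's proof. Two concrete points are missing. First, after one connects the $\epsilon$-punctures to each other and to the vertex punctures of $\Delta$ in the natural symmetric way, the complementary regions are not all triangles: there remain ideal quadrilaterals straddling the sides of the tiles $g\Delta$, and one must choose a single diagonal in each, consistently over the whole $G$-orbit. This is where equivariance can genuinely fail, and the paper checks it by a stabilizer computation: for the quadrilateral meeting $[z_b,z_c]$, the orders $4$ and $8$ of $\mathrm{Fix}(z_b)$ and $\mathrm{Fix}(z_c)$ force each quadrilateral in the orbit to be the image of the model one under a \emph{unique} $g\in G$, while for the quadrilateral meeting $[z_b,z_{\sigma}]$ the element $g$ is unique only up to the involution $A$, and the construction survives only because the chosen diagonal $[\sigma_{\epsilon},A\sigma_{\epsilon}]$ is $A$-invariant. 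Your remark that placing the $p_j$ ``close enough to the corresponding vertex \dots leaves ample room'' does not substitute for this verification. Second, you do not check that the straightened geodesic triangulation (in the uniformizing metric $g_X$, which is not the restricted $\mathbb{H}^2$-metric) is locally finite and actually covers $X$; for an infinite ideal triangulation this is not automatic, since straightened edges could a priori accumulate. The paper disposes of this by noting that the lifted shear function satisfies the summation condition (\ref{eq:qs-suff-cond}) of \cite{PS}, so the lifted triangulation fills $\mathbb{H}^2$. As written, your argument shows only that \emph{if} a $G$-invariant ideal triangulation of $X$ exists then it is bounded, not that one exists.
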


The following example is a surface constructed by Basmajian, Parlier, and Vlamis \cite{BPV}. Start with the hyperbolic rectangle with three angles $\pi /2$ and the fourth angle $0$. The two finite sides are chosen to have equal lengths $\sinh^{-1} (1)$. Take countably many isometric rectangles as above and isometrically glue finite sides to finite sides such that four sides are at the vertex to make an angle $2\pi$, thus a regular point of the surface. At the vertex with one finite side and one infinite side of a rectangle, we again glue four rectangles to obtain an angle $2\pi$. Finally, at the ideal vertex, we also glue four rectangles to obtain a Riemann surface with countably many punctures accumulating to one infinite end. 

\begin{prop}
\label{prop:BPV-surface}
Let $X$ be a Riemann surface obtained by isometrically gluing infinitely many isometric rectangles with two finite sides of length $\sinh^{-1} (1)$, three angles $\pi /2$, and one angle $0$ as above. Then $X$ is parabolic, and there is an ideal triangulation of $X$ that is bounded and whose shears are all zero.
\end{prop}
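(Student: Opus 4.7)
The plan is to build the triangulation directly from the symmetries of the rectangle gluing and to establish parabolicity by combining the combinatorics of the rectangle adjacency graph with Kanai's discretization. Begin by placing one rectangle in $\HH$ with ideal vertex at $\infty$, Type A vertex at $i$, and Type B vertices at $\pm 1+i\sqrt 2$; the two finite sides lie on the circles $|z\mp 1|=\sqrt 2$, have hyperbolic length $\sinh^{-1}(1)$, and the rectangle has the reflection symmetry $z\mapsto -\bar z$. The BPV gluing places the Type A, Type B, and ideal vertices on three sublattices of a combinatorial $\mathbb Z^2$, with four rectangles meeting at every vertex. At each Type A vertex $p$ the four incident rectangles assemble, by the local $D_4$-symmetry, into a regular ideal quadrilateral $Q_p$ with four punctures as its ideal vertices and area $4\cdot\pi/2=2\pi$; the $Q_p$'s tile $X$ with each rectangle belonging to exactly one $Q_p$.

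Triangulate each $Q_p$ by one of its two diagonals in a checkerboard pattern over the sublattice of Type A vertices. Each such diagonal is the axis of a reflection that swaps the two ideal triangles of $Q_p$, so the shear across every diagonal vanishes. Across a side shared by adjacent $Q_p$ and $Q_{p'}$, the reflection of $X$ identifying the two quadrilaterals is an isometry (the rectangles are isometric and the gluing is symmetric), and the checkerboard choice is precisely what guarantees that this reflection sends the chosen diagonal of $Q_p$ to the chosen diagonal of $Q_{p'}$; so the two adjacent triangles are reflected images of each other and the shear across every side vanishes as well. Each puncture is incident to four $Q_p$'s contributing four sides plus at most four additional diagonals, so its degree is at most $8$. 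Combined with local finiteness, the condition that every edge has both endpoints at punctures, and the vanishing of all shears, this shows the triangulation is bounded.

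For parabolicity, the rectangle adjacency graph of $X$ (vertices are rectangles, edges are shared sides) is combinatorially the standard $\mathbb Z^2$ lattice: each rectangle has exactly four neighbors, and globally the gluing realizes a planar, one-ended $\mathbb Z^2$-quadrangulation whose ideal-vertex sublattice becomes the set of punctures of $X$. Since simple random walk on $\mathbb Z^2$ is recurrent (Varopoulos' theorem) and $X$ has bounded geometry in its thick part with standard cusp neighborhoods at the punctures, Kanai's discretization theorem transfers recurrence to Brownian motion on $X$; equivalently, $X$ admits no Green's function and is parabolic. Concretely this can be witnessed by logarithmic cutoff functions built from the graph-distance to a base rectangle, whose Dirichlet energies tend to zero.

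The main technical obstacle is the global consistency of the checkerboard diagonal choice: one must check that the reflection identifying two adjacent quadrilaterals across their common side really does interchange the two checkerboard parities, so that the chosen diagonals line up and the zero-shear condition holds on every shared side simultaneously. This is an orientation-tracking argument that uses the explicit symmetries of the rectangle and the rigid $\mathbb Z^2$ combinatorics of the gluing.
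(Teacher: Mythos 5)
Your triangulation is essentially the paper's construction in different packaging: the geodesics joining punctures of adjacent tiles are exactly the edges the paper draws first, your regular ideal quadrilaterals $Q_p$ are the complementary regions of those edges, and your checkerboard diagonals are the paper's ``zig-zag'' diagonals. Your explicit check that the checkerboard parity is precisely what forces the two triangles on either side of each shared edge to be mirror images (hence zero shear) fills in what the paper asserts in a single sentence, and your degree count (at most $8$, alternating $8$ and $4$) matches the paper's. The genuine divergence is in the proof of parabolicity. The paper constructs disjoint annuli $A_n$ around concentric square curves $C_n$, uses the linear area growth and uniform width of $A_n$ to get $\mathrm{ext}(\Gamma_n)\geq C/n$, and concludes by the serial rule that the extremal length of the curve family escaping to the non-simple end is $\sum_n C/n=\infty$. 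You instead pass to the rectangle adjacency graph (which you correctly identify with $\mathbb{Z}^2$), quote recurrence of the planar random walk, and transfer back by Kanai's discretization. Both routes are valid and both ultimately exploit quadratic area growth, but the paper's is self-contained, while yours has one hypothesis to discharge: Kanai's theorem in its standard form requires a positive lower bound on the injectivity radius, which fails in the cusps, so you must either invoke a variant adapted to surfaces with cusps (noting that cusps are finite-area ends and do not affect recurrence) or, more simply, run your fallback argument directly --- the logarithmic cutoff functions have Dirichlet energy $O(1/\log n)$ because the combinatorial ball of radius $n$ consists of $O(n^2)$ tiles of area $2\pi$ each, and that computation needs no discretization theorem at all.
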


As we pointed out, the surfaces in Propositions \ref{prop:triangulation-kinjo} and \ref{prop:BPV-surface} do not have even upper bounded geodesic pants decomposition, and their Teichm\"uller spaces are not (to our best knowledge) successfully parametrized by the Fenchel-Nielsen coordinates. Our main result is a parametrization of the Teichm\"uller spaces of surfaces with bounded ideal geodesic triangulation, which includes a large class of surfaces with bounded geometry and the above examples (see \S \ref{sec:planar} for more details). 

Our first result gives a homeomorphic parametrization of the Teichm\"uller space $T(X)$ in terms of shear functions on a bounded ideal hyperbolic triangulation $\mathcal{T}$ on $X$.

\begin{thm}
\label{thm:shears-homeo}
Let $X$ be a Riemann surface with a bounded ideal triangulation $\mathcal{T}$. Denote by $E(\mathcal{T})$ the edge set of $\mathcal{T}$. The shear map
$$
s_{*}:T(X)\to\ell^{\infty}(E(\mathcal{T}),\mathbb{R})
$$
is injective. The image $s_{*}(T(X))$ consists of all shear functions $g\in \ell^{\infty}(E(\mathcal{T}),\mathbb{R})$ such that the sum of $g$-values on each set of edges ending in a single puncture of $X$ is zero.

Moreover, the shear map $s_{*}$ is a homeomorphism onto its image for the natural topologies on $T(X)$ and $\ell^{\infty}(E(\mathcal{T}),\mathbb{R})$. In particular, $s_{*}(T(X))$ contains the constant zero function. The point in $T(X)$ corresponding to the zero shear function has a subgroup of $PSL_2(\mathbb{Z})$ for its covering group.
\end{thm}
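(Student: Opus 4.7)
The plan is to follow the classical developing-map paradigm for shear coordinates, adapted to the infinite-triangulation setting. Fix a base point in $T(X)$ represented by $X$ itself together with its triangulation $\mathcal{T}$, lift $\mathcal{T}$ to a tessellation $\widetilde{\mathcal{T}}$ of $\mathbb{H}^2$, and normalize one distinguished ideal triangle $\Delta_0 \subset \mathbb{H}^2$.

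\emph{Step 1: injectivity.} Given $[f:X\to Y]\in T(X)$, pull the hyperbolic triangulation on $Y$ back to a triangulation of $X$ with the same combinatorics as $\mathcal{T}$. The shear of an edge $e$ is determined by the signed hyperbolic distance along $e$ between the two feet of perpendiculars from the opposite vertices of the two ideal triangles sharing $e$. Starting from $\Delta_0$, the shear on each adjacent edge determines the position of the neighboring triangle uniquely, and by induction along the dual tree of $\widetilde{\mathcal{T}}$ the shear function pins down the developing map $\mathbb{H}^2\to\mathbb{H}^2$ up to post-composition by a fixed isometry. The covering group is then recovered as the monodromy of loops, and $[Y]$ is determined uniquely.

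\emph{Step 2: image description.} That the sum of shears at a puncture must vanish is a completeness condition. Walking once around a puncture through the cyclically ordered edges issuing from it yields a developing-side isometry of $\mathbb{H}^2$; normalizing so that the puncture lies at $\infty$, a direct computation shows this isometry is upper-triangular with horocyclic translation length equal to the sum of the shears along those edges. Completeness of $Y$ at the puncture forces the isometry to be parabolic, i.e., the sum to vanish. Conversely, given any $g\in\ell^{\infty}(E(\mathcal{T}),\mathbb{R})$ satisfying the puncture conditions, the inductive development of Step 1 produces a complete hyperbolic surface $Y_g$ together with a triangulation-preserving homeomorphism $X\to Y_g$. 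To upgrade this to a quasiconformal equivalence, and thereby conclude $[Y_g]\in T(X)$, one interpolates triangle by triangle, assembling a piecewise-smooth map whose Beltrami coefficient is bounded pointwise in terms of $\|g-s_{*}([X])\|_{\infty}$; the bounded valence at each puncture is used here to control the accumulation of distortion in the fans of triangles around punctures.

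\emph{Step 3: homeomorphism.} Continuity of $s_{*}$ follows from the real-analytic dependence of shears on Beltrami coefficients, combined with the bounded-triangulation hypothesis that ensures the derivative is $\ell^{\infty}$-bounded. For continuity of $s_{*}^{-1}$, the explicit quasiconformal map constructed in Step 2 yields, for $g$ sufficiently close to $s_{*}([X])$, an estimate of the form $d_T([Y_g],[X])\leq C\,\|g-s_{*}([X])\|_{\infty}$ with $C$ depending only on the combinatorial bounds of $\mathcal{T}$. For the final claim, zero shear on every edge means that adjacent ideal triangles in $\widetilde{\mathcal{T}}$ share the foot of the perpendicular from the opposite vertex, which is precisely the defining property of the Farey tessellation of $\mathbb{H}^2$. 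Since $PSL_2(\mathbb{Z})$ is the group of orientation-preserving isometries preserving the Farey tessellation, the covering group of the surface associated to the zero shear function is a subgroup of $PSL_2(\mathbb{Z})$.

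\emph{Main obstacle.} The technical heart is Step 2: producing, from a bounded shear function satisfying the puncture conditions, a surface that is not merely hyperbolic but quasiconformally equivalent to $X$. This requires assembling infinitely many quasiconformal maps between ideal triangles with uniformly controlled dilatation and checking that the assembly is globally quasiconformal across the whole triangulation. The hypothesis of uniformly bounded valence at each puncture is the crucial combinatorial ingredient that prevents the dilatation from blowing up as one approaches a puncture through its fan of edges.
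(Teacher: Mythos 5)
Your outline follows the same general paradigm as the paper (developing map, zero-sum condition at punctures, control along fans), but where the paper leans on the quantitative quasisymmetry criteria of \cite{Saric10,Saric21} and the sufficient condition of \cite{PS} (conditions (\ref{eqn:qs-cond}) and (\ref{eq:qs-suff-cond})), you propose a direct geometric construction: a triangle-by-triangle quasiconformal interpolation with dilatation controlled by $\|g-s_*([X])\|_{\infty}$. That construction is precisely where all the difficulty lives, and it is asserted rather than carried out. The key mechanism is not articulated: what makes the developing map quasisymmetric is not the $\ell^{\infty}$-bound on shears together with bounded valence per se, but the fact that the zero-sum condition at each puncture, combined with the uniform bound $d$ on valence, forces every partial sum $\sum_{j=m}^{m+k}s(e_j)$ along every fan to be bounded by $(d-1)\|s\|_{\infty}$. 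Mere boundedness of shears is known to be insufficient (this is exactly the failure in the universal Teichm\"uller space case, cf.\ the remark after Theorem \ref{thm:Teich-param}), and any interpolation scheme must confront the cumulative drift of triangles deep in a fan, which is measured by these partial sums. Until you either prove that your piecewise interpolation has dilatation controlled by the partial sums (and then invoke their boundedness), or cite the criterion of \cite{PS} as the paper does, Step 2 is a gap rather than a proof. The same gap propagates to your continuity estimate for $s_*^{-1}$ in Step 3, which rests on the unproven interpolation; the paper instead proves inverse continuity by a direct (and lengthy) estimate showing $\|s_n-s\|_{\infty}\to 0$ implies $\sup_p M_{s_n,s}(p)\to 1$, again exploiting the grouping of fan edges into blocks of $d$ consecutive edges with vanishing shear sum.

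Two smaller points. First, your justification of the continuity of $s_*$ via ``real-analytic dependence of shears on Beltrami coefficients'' gets ahead of the logical order: real-analyticity is the content of Theorem \ref{thm:shear-r-analytic}, proved later and using the present theorem; what is needed here is only the uniform (over all edges) continuity of cross-ratios of image quadruples, which the paper extracts from the $M_{s_n,s}(p)\to 1$ criterion of \cite{Saric10}. Second, in your necessity argument the holonomy around a puncture, normalized to fix $\infty$, is $z\mapsto e^{\Sigma}z+c$ where $\Sigma$ is the sum of the shears in the fan; the sum is the logarithm of the multiplier, not a ``horocyclic translation length,'' and parabolicity forces $\Sigma=0$. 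The conclusion is right but the stated mechanism is not. Your Step 1 (injectivity via the developing map along the dual tree) and the final identification of the zero-shear point with a quotient of the Farey tessellation by a subgroup of $PSL_2(\mathbb{Z})$ are correct and agree with the paper.
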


When $X$ has a bounded or an upper bounded pants decomposition, the Fenchel-Nielsen coordinates give homeomorphic parametrizations in the corresponding space. Unlike for compact surfaces, it is not known whether these parametrizations are real-analytic  (although one would expect to be so).
We prove that the above shear parametrization is a real-analytic diffeomorphism onto its image.

\begin{thm}
\label{thm:shear-r-analytic}
Let $X$ be a Riemann surface with a bounded ideal hyperbolic triangulation $\mathcal{T}$. Then, the shear map
$$
s_{*}:T(X)\to\ell^{\infty} (E(\mathcal{T}),\mathbb{R})
$$
is a real-analytic diffeomorphism onto its image.
\end{thm}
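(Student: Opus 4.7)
The plan is to upgrade Theorem \ref{thm:shears-homeo} from a homeomorphism to a real-analytic diffeomorphism by realizing $s_{*}$ as the real trace of a holomorphic map between complex Banach manifolds and invoking the Banach inverse function theorem. Since Theorem \ref{thm:shears-homeo} already identifies the image with the closed subspace $V\subset \ell^{\infty}(E(\mathcal{T}),\mathbb{R})$ of shear functions whose values sum to zero at each puncture, the remaining issue is local real-analyticity of $s_{*}$ and of its inverse at every point of $T(X)$.

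First I would parametrize $T(X)$ near a base point by $\Gamma$-invariant Beltrami coefficients $\mu$ on $\mathbb{H}^{2}$ with $\|\mu\|_{\infty}<1$, where $\Gamma$ is the covering group. Lifting $\mathcal{T}$ to an ideal triangulation $\tilde{\mathcal{T}}$ of $\mathbb{H}^{2}$, each edge $\tilde{e}$ has four ideal endpoints (the vertices of the two adjacent ideal triangles), and the shear $s_{\tilde{e}}(\mu)$ is the logarithm of a cross-ratio of the images of these four points under the normalized solution $f^{\mu}$ of the Beltrami equation. By Ahlfors-Bers each scalar function $\mu\mapsto s_{\tilde{e}}(\mu)$ is holomorphic, and the decisive step is to upgrade this to holomorphicity of the $\ell^{\infty}(E(\mathcal{T}),\mathbb{C})$-valued map $\mu\mapsto (s_{e}(\mu))_{e}$. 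After conjugating each pair of adjacent triangles by a M\"obius transformation sending three of the four ideal vertices to fixed points of $\partial\mathbb{H}^{2}$, the fourth vertex lies in a compact subset of $\partial\mathbb{H}^{2}$ uniformly in $\tilde{e}$, thanks to the uniform $\ell^{\infty}$-bound on the shears at the base point and the uniform valence at punctures; Cauchy estimates then give the required uniform holomorphic control, and hence a genuine holomorphic Banach-space map whose real trace is $s_{*}$.

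To invert, I would apply the real-analytic Banach inverse function theorem. Computing $Ds_{*}$ at a point via the explicit infinitesimal shear formula gives injectivity into $V$; surjectivity onto $V$ I would establish by constructing an explicit bounded right inverse using infinitesimal earthquakes supported on $\mathcal{T}$, summed in $\ell^{\infty}$ via the bounded-triangulation hypothesis. The open mapping theorem then promotes $Ds_{*}$ to a Banach-space isomorphism onto $V$; this yields a real-analytic local inverse that must coincide with the continuous global inverse supplied by Theorem \ref{thm:shears-homeo}, and the same argument at every base point finishes the proof. The hard part will be the uniform $\ell^{\infty}$ estimate in the holomorphicity step: the cross-ratio is nowhere near globally Lipschitz on $\partial\mathbb{H}^{2}$, so one must exploit the bounded-triangulation hypothesis very carefully---especially the uniform valence at punctures and the $\ell^{\infty}$-bound on shears---to keep all normalized configurations of four ideal endpoints inside a single compact subset of $\overline{\mathbb{H}^{2}}$, and in particular to control shears along edges incident to widely separated punctures simultaneously.
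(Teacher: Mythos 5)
Your forward direction (holomorphy of $\mu\mapsto(s_e(\mu))_e$ into $\ell^{\infty}(E(\mathcal{T}),\mathbb{C})$ via Ahlfors--Bers, M\"obius normalization of each quadruple, and uniform control of the normalized configurations) is essentially the paper's argument: the paper proves boundedness plus weak holomorphicity and invokes the fact that a bounded weakly holomorphic map between complex Banach spaces is holomorphic, with the uniform compactness of the normalized cross-ratios coming from quasiconformal distortion of moduli and the Schwarz lemma. But for the inverse you take a genuinely different route, and that is where the gap lies. The paper never touches the differential $Ds_*$ or the inverse function theorem; instead it directly constructs a holomorphic extension of $s_*^{-1}$ to a complex neighborhood of the shear coordinates, by writing the developing map for a complex shear as a real earthquake followed by a pure bending, proving the bending is injective on the vertices via nested half-spaces, and then upgrading to a holomorphic motion of $\mathbb{C}$ using the lambda lemma and Slodkowski's theorem. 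The injectivity step rests on a quantitative cancellation lemma for compositions $T_g^{\epsilon}\circ T_h^{-\epsilon}$ of loxodromics with opposite translation lengths sharing an ideal endpoint, combined with the fact that the shears in each fan sum to zero over a period and the valence at punctures is uniformly bounded.

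The concrete gap in your proposal is the claimed bounded right inverse of $Ds_*$ ``using infinitesimal earthquakes supported on $\mathcal{T}$, summed in $\ell^{\infty}$.'' The edges of $\tilde{\mathcal{T}}$ accumulate in fans: a geodesic arc of length $1$ can cross arbitrarily many edges sharing an ideal endpoint, so the atomic transverse cocycle with weights $\dot{s}(e)$ has unbounded total variation on compact arcs and the naive infinitesimal earthquake vector field neither converges nor defines a bounded Beltrami class. The only thing that saves the construction is the zero-sum condition at each puncture, which forces telescoping cancellations within each fan --- exactly the mechanism the paper isolates in its cancellation lemma --- and your proposal does not explain how that condition enters the $\ell^{\infty}$ summation. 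Injectivity of $Ds_*$ is likewise asserted rather than argued; it does not follow from writing down the infinitesimal cross-ratio formula. You also single out the uniform estimate in the forward holomorphicity step as the hard part, but that half is comparatively routine; the real difficulty of the theorem is entirely on the inverse side, and as written your proof of that side is incomplete.
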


We prove the above theorem by extending the shear map to a neighborhood of $T(X)$ in the quasi-Fuchsian space $QF(X)$. The extended shear map takes values in $\ell^{\infty}(E(\mathcal{T}),\mathbb{C})$ and it is defined by bending from the upper half-plane into upper half-space when the shears have non-zero imaginary parts. The bendings are done in terms of a single complex parameter, and we show that the bending induces a holomorphic motion of the real axis. Using the properties of holomorphic motions, we prove that the bending is holomorphic in a single complex parameter (for the origin of this idea, see \cite{KeenSeries,Saric,Saric14}). Since a map from one complex Banach space onto another is holomorphic if and only if it is bounded and weakly holomorphic, the above implies that the extended map is holomorphic (see \cite{Chae}). Hence, its restriction to $T(X)$ is real-analytic. The idea that we can use the holomorphicity of the bending in a single variable to prove that the map from $T(X)$ is holomorphic is new in this setup.

The critical step in proving that the bending map is holomorphic is to prove that it is injective on the real axis. This phenomenon results from specific cancellation properties of the composition of two loxodromic elements whose shears are opposite but of equal absolute value (see \cite{Bonahon,Saric14}). Since the sum of shears at a puncture is zero and there is a bound on the number of edges incident to each puncture, this phenomenon also induces appropriate cancellations in the case of infinite Riemann surfaces.

Finally, we consider the big mapping class group $MCG(X)$ and the quasiconformal mapping class group $QMCG(X)$, where $QMCG(X)<MCG(X)$.  Given $[f]\in MCG(X)$, we denote by $\mathcal{T}_{[f]}$ the ideal hyperbolic triangulation of $X$ obtained by straightening the edges of $f(E(\mathcal{T}))$ into hyperbolic geodesics via homotopy modulo the punctures. We will say that $\mathcal{T}$ and $\mathcal{T}_{[f]}$ have a {\it finite intersection property} if there is a constant $M$ such that each edge of $\mathcal{T}$ intersects at most $M$ edges of $\mathcal{T}_{[f]}$, and each edge of $\mathcal{T}_{[f]}$ intersects at most $M$ edges of $\mathcal{T}$. Using the fact that $X$ is quasiconformally equivalent to a Riemann surface on which the image of $\mathcal{T}$ has zero shears and a result of Parlier and the second author \cite{PS}, we obtain

 \begin{thm}
 \label{thm:qmcg}
 Let $X$ be a Riemann surface with a bounded ideal hyperbolic triangulation $\mathcal{T}$. Then, an element $[f]$ in the big mapping class group $MCG(X)$ belongs to the quasiconformal mapping class group $QMCG(X)$ if and only if the triangulations $\mathcal{T}$ and $\mathcal{T}_{[f]}$ have a finite intersection property.
 \end{thm}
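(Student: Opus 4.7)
The plan is to reduce the theorem to the case where $\mathcal{T}$ has identically zero shears and then invoke the result of Parlier and Whitney \cite{PS}. By Theorem~\ref{thm:shears-homeo}, the zero function lies in $s_{*}(T(X))$, so there is a quasiconformal homeomorphism $h:X\to X_{0}$ onto a Riemann surface $X_{0}$ whose covering group is a subgroup of $PSL_{2}(\mathbb{Z})$ and on which (the straightening of) $h(\mathcal{T})$ is a bounded ideal triangulation $\mathcal{T}_{0}$ with all shears equal to zero. Conjugation by $h$ yields an isomorphism $QMCG(X)\cong QMCG(X_{0})$ carrying $\mathcal{T}_{[f]}$ to $\mathcal{T}_{0,[hfh^{-1}]}$; the finite intersection property is purely topological and is preserved by $h$. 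Hence we may assume that $\mathcal{T}$ has zero shears, so that its lift to $\mathbb{H}^{2}$ is a $\Gamma$-invariant subcomplex of the Farey tessellation.

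For the forward direction, assume $[f]\in QMCG(X)$ is represented by a $K$-quasiconformal homeomorphism $f$. Its lift $\tilde f:\mathbb{H}^{2}\to\mathbb{H}^{2}$ extends to a quasisymmetric boundary map $\hat f$ conjugating the covering group to itself, and each lifted edge of $\mathcal{T}_{[f]}$ is the geodesic between the $\hat f$-images of the endpoints of a lifted edge of $\mathcal{T}$. I would use the bounded distortion of cross-ratios by $\hat f$, together with the bounded valence of $\mathcal{T}$ at each cusp and the combinatorial rigidity of the Farey tessellation, to show that only finitely many lifted edges of $\mathcal{T}_{[f]}$ (with a uniform bound depending only on $K$ and the valence) can cross any fixed lifted edge of $\mathcal{T}$; projecting to $X$ yields the finite intersection property.

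Conversely, assume $\mathcal{T}$ and $\mathcal{T}_{[f]}$ satisfy the finite intersection property. In the zero-shear setting this is exactly the combinatorial hypothesis of \cite{PS}, which constructs from such data a quasisymmetric map of $\partial\mathbb{H}^{2}$ whose equivariant quasiconformal extension to $\mathbb{H}^{2}$ descends to a representative of $[f]$ in $QMCG(X_{0})$; pulling back by $h$ gives $[f]\in QMCG(X)$. The principal obstacle is the forward direction, namely converting bounded quasisymmetry of $\hat f$ into a uniform intersection bound. Both the zero-shear reduction (giving the Farey combinatorics) and the uniform puncture-valence bound of $\mathcal{T}$ are essential: without them a quasi-geodesic at bounded Hausdorff distance from a fixed geodesic could a priori cross unboundedly many edges.
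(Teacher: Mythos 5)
Your proposal follows essentially the same route as the paper: reduce to the zero-shear model via the shear parametrization (so that $\mathcal{T}$ lifts to the Farey tessellation, and the finite intersection property, being a topological invariant of homotopy classes of arcs, transfers under the conjugating quasiconformal map), and then invoke \cite[Theorem 1.2]{PS} for homeomorphisms of $\hat{\mathbb{R}}$ preserving the Farey vertices. The only deviation is that \cite[Theorem 1.2]{PS} is already an equivalence, so it also covers your forward direction; the cross-ratio/bounded-valence argument you sketch there is in the right spirit but is only a plan as written, and it is unnecessary once the cited result is used in both directions.
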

 
 The above theorem gives a topological criteria for checking whether $[f]\in MCG(X)$ does not have a quasiconformal representative in its homotopy class on $X$. It is enough to establish that the intersection number is infinite.

\section{A class of planar Riemann surfaces with bounded geometry}
\label{sec:planar}

A {\it geodesic pair of pants} is a bordered Riemann surface (whose interior is homeomorphic to a sphere minus three disks) equipped with the conformal hyperbolic metric such that the boundary components are either closed geodesics (called cuffs) or punctures, where at least one boundary component is a cuff. We will say that a family of geodesic pairs of pants is {\it bounded} if there are two positive numbers such that the cuff lengths are between two numbers for the whole family of pants.

A Riemann surface is said to have {\it bounded geometry} if there is a lower and upper bound on the injectivity radius away from horodisk neighborhoods of punctures with the boundary of the length $2$ (see \cite{Kinjo,BPV}). If a surface has a bounded geometry, the covering group is of the first kind.
A surface $X$ is planar if and only if it is conformally equivalent to a domain in $\mathbb{C}$. 

When $X$ is a surface with bounded geometry, Kinjo \cite{Kinjo1} proved that it can be decomposed into at most countably many geodesic subsurface components $\{ X_i\}_i$ such that each component $X_i$ either has a geodesic pants decomposition with cuff lengths between two positive constants or it is a planar subsurface that can be decomposed into right-angled hexagons with side lengths bounded between two positive constants. A planar component has countably many closed geodesics on its boundary, and each hexagon has three non-consecutive sides on the boundary geodesics of the planar component. Each boundary geodesic of a planar component is the union of finitely many sides of hexagons. 
The component subsurfaces attached to the planar components are attached along simple closed geodesics. Necessarily, all the punctures are in the attached components to the planar components. 

Each attached component (to a planar component) can have a finite or infinite topology. The simplest topology of a single attached component is a pair of pants with one closed geodesic and two punctures. Generally, the single attached piece has a bounded pants decomposition, with pairs of pants possibly having one or two punctures as their respective boundaries. However, the pairs of pants in the decomposition of a component may all have cuffs on their boundaries, and the accumulation of these cuffs (as a subset of $\mathbb{C}$) could be a Cantor set. 

A topological end of a surface is a way of going off to infinity on the surface (see \cite{Richards}). More precisely, given a compact exhaustion of a surface, the nested family of complements of the compact sets defines a topological end. This definition is independent of a choice of compact exhaustion (see \cite{Richards}). An end is {\it simple} if not accumulated by other ends or genus. In the case of a Riemann surface with a covering Fuchsian group of the first kind, each simple end is a puncture. 

At this point, we assume that planar, bounded geometry surface $X$ has an infinite part that is homeomorphic to a plane minus countably many closed disks. The boundary of the infinite part is a countable union of closed geodesics, and a pair of pants with two punctures is attached to each geodesic. The planar part is partitioned into right-angled hexagons of bounded side lengths whose three non-adjacent sides lie on the closed geodesic on the boundary of the infinite part, and the other three sides are geodesic arcs orthogonal to the boundary. Necessarily, each closed boundary geodesic of the infinite part meets a bounded number of hexagons.

This situation arises in Kinjo \cite{Kinjo} and Basmajian-Parlier-Vlamis \cite{BPV}. We draw geodesics connecting punctures inside each pair of pants. Further, we connect punctures in one pair of pants to the punctures in another pair of pants if a hexagon inside the infinite part connects the two pairs of pants. The geodesics are drawn such that they are disjoint, and they form an ideal triangulation of $X$.  More generally, the same method gives

\begin{prop}
Let $X$ be a Riemann surface with a planar subsurface $P$ with bounded geometry such that each boundary component of $P$ is attached to a surface with bounded ideal triangulation. Then, $X$  has a bounded ideal triangulation.
\end{prop}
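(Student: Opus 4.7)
The plan is to extend the union of the given bounded ideal triangulations $\mathcal{T}_i$ on the attached surfaces $Y_i$ to a bounded ideal triangulation of all of $X$ by adding a compatible family of ideal arcs that cross the planar part $P$.

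First, I would invoke the hexagonal decomposition of $P$ recalled above: since $P$ is planar and of bounded geometry, it decomposes into right-angled hexagons with side lengths bounded between two positive constants, where each hexagon has three non-adjacent sides lying on the boundary closed geodesics $\gamma_i\subset\partial P$ and three orthogonal seams in the interior. Each $\gamma_i$ is a union of uniformly boundedly many hexagon sides, and each hexagon meets at most three of the $\gamma_i$. Next I construct the cross-$P$ arcs. For each boundary side $s\subset\gamma_i$ of a hexagon $H$, the boundedness of $\mathcal{T}_i$ (local finiteness, bounded valence at each puncture, bounded shears) implies that some puncture $p_s\in Y_i$ lies at uniformly bounded hyperbolic distance from $s$. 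For each hexagon $H$ with boundary sides $s_1,s_2,s_3$, I draw the three ideal geodesic arcs through $H$ joining $p_{s_a}$ to $p_{s_b}$ pairwise. Together with $\bigcup_i\mathcal{T}_i$, modified only in a fixed-width collar of each $\gamma_i$ as far as needed to accommodate the new arcs, these arcs form an ideal triangulation $\mathcal{T}$ of $X$.

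Finally, I would verify that $\mathcal{T}$ is bounded. Edges end at punctures, and $\mathcal{T}$ is locally finite by construction and by the bounded geometry of $P$ and of each $Y_i$. The valence of each puncture $p\in Y_i$ stays bounded, since its $\mathcal{T}_i$-valence is bounded by hypothesis and only boundedly many hexagons lie within bounded distance of $p$. The step I expect to be most delicate is the $\ell^{\infty}$ bound on shears. Shears of edges of $\mathcal{T}_i$ away from $\gamma_i$ are unchanged and bounded by hypothesis. For the new cross-$P$ arcs, and for the ``interface'' edges whose two adjacent triangles sit respectively in $Y_i$ and in $P$, both adjacent ideal triangles come from a universally bounded family of isometry types (the hexagon geometry is uniformly bounded, and the nearby $\mathcal{T}_i$-triangles are bounded by hypothesis), so the shear, computed from the positions in $\HH$ of the four vertices of the two adjacent triangles, is uniformly bounded. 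Combining these bounds over all edges completes the verification that $\mathcal{T}$ is a bounded ideal triangulation of $X$.
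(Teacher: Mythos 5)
Your construction is essentially the paper's own: the paper likewise takes the right-angled hexagon decomposition of the planar part (with uniformly bounded side lengths and boundedly many hexagons per boundary geodesic) and adds disjoint ideal geodesics joining punctures of the attached pieces whenever a hexagon connects them, obtaining boundedness of valence and shears from the uniform geometry of the hexagons and of the given triangulations. Your write-up is in fact more detailed than the paper's, which presents the construction in a few sentences and a figure without a formal verification of the shear bound.
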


\begin{center}

\includegraphics[width=\linewidth]{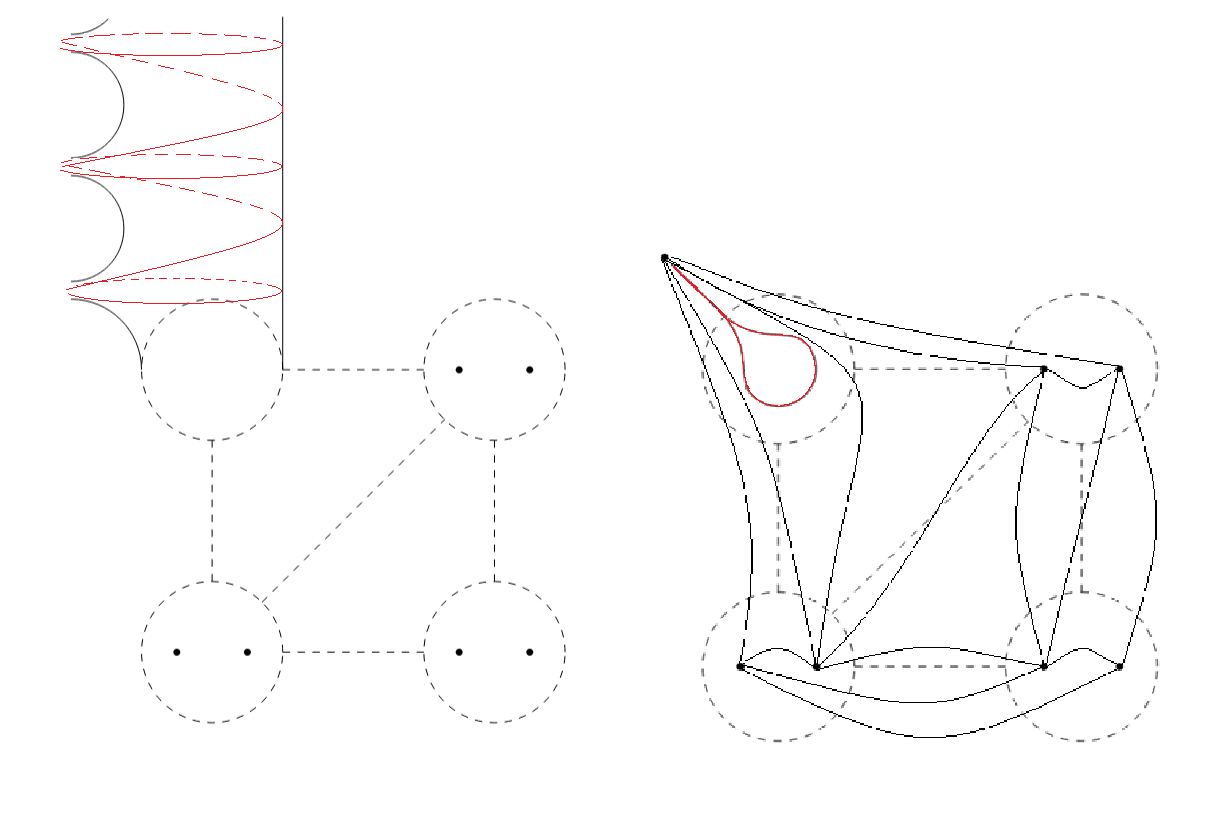}
\end{center}

\begin{center}
    
\begin{tikzpicture}
\draw(0,0) node{Fig 2.1 Extending an ideal  triangulation to the planar part};
\end{tikzpicture}
\end{center}

\section{The shear coordinates on the Farey triangulation and quasisymmetric maps}

Let $\mathbb{H}^2$ be the upper half-plane $\{ z:Im(z)>0\}$ equipped with the hyperbolic metric $\frac{|dz|}{Im(z)}$; the ideal boundary of $\mathbb{H}^2$ is identified with $\mathbb{R}\cup\{\infty\}$. 
Let $\Delta_0$ be the ideal hyperbolic triangle in $\mathbb{H}^2$ with vertices $0$, $1$ and $\infty$. The Modular group $\Gamma$ is generated by the Mobius transformations $z\mapsto \frac{-1}{z}$ and $z\mapsto z+1$. The $\Gamma$-orbit of $\Delta_0$ is a triangulation $\mathcal{F}$ of $\mathbb{H}^2$ called the {\it Farey triangulation}. Every rational number appears as a vertex for the image of $\Delta_0$ under some $\gamma\in\Gamma$. 

The Farey triangulation $\mathcal{F}$ is the image of $\Delta_0$ under all finite compositions of the (orientation reversing) isometric reflections in the sides of $\Delta_0$. In particular, this method of construction reveals that two adjacent triangles - two triangles sharing a common side - are images of each other through a reflection over their common side.

Given two arbitrary, adjacent ideal triangles, we can quantify the degree to which they are ``off-center'' with respect to each other. Adjacent triangles are evenly aligned with each other-``centered''- if the orthogeodesics to their common side from the vertices opposite the common side meet at a point (and form a single geodesic arc): when this is the case we define the {\it shear} between the two triangles to be zero. 

When two orthogeodesics do not meet on the common side-``off-centered'', the shear is the signed distance between their feet on the common side.  To visualize the shear, 
start with two adjacent triangles with zero shear, slide one orthogeodesic of one triangle by the signed  hyperbolic distance equal to the shear along the common side while keeping it orthogonal (see Figure 3.1). The shear is positive if when drawn as a path, this distance is drawn to the left; and negative if it is to the right. This is independent of which triangle we start drawing from. A formula for the shear between two adjacent ideal triangles can be expressed in terms of their vertices. Note that due to the construction of the Farey tesselation, any two adjacent triangles in $\mathcal{F}$ are reflected images of each other over the common shared edge; therefore, all  shears on $\mathcal{F}$ are  zero.

\begin{center}
    \begin{tikzpicture}
        \draw (-5,0)--(5,0);
        \draw (-3,0) arc (180:0:3);
        \draw (1.5,0) arc (180:0:0.75);
        \draw (-1,0) arc (180:0:2);
        \draw (-3,0) arc (180:0:1);
        \draw (-1,0) arc (180:0:1.25);
        \draw [dashed] (-3,0) arc (180:55:1.5); 
        \draw [dashed] (1.5,0) arc (180:145:3);
    \draw [blue,thick,domain=142:60] plot ({cos(\x)+cos(\x)+1}, {sin(\x)+sin(\x});
    \draw (-3,-.5) node{a};
    \draw (-1,-.5) node{b};
    \draw (1.5,-.5) node{c};
    \draw (3,-.5) node{d};
    \draw (0,-1) node{Fig 3.1 Positive shear of a pair of triangles};

    \end{tikzpicture}
    
\end{center}

\begin{center}
    \begin{tikzpicture}
        \draw (-5,0)--(5,0);
        \draw (-3,0) arc (180:0:3);
        \draw (1.5,0) arc (180:0:0.75);
        \draw (-3,0) arc (180:0:2.25);
        \draw (-3,0) arc (180:0:1);
        \draw (-1,0) arc (180:0:1.25);
        \draw [dashed] (3,0) arc (0:110:1.5); 
        \draw [dashed] (-1,0) arc (0:43:3);
    \draw [red,thick,domain=115:40] plot ({2.25*cos(\x)-0.75}, {2.25*sin(\x)});
    \draw (-3,-.5) node{a};
    \draw (-1,-.5) node{b};
    \draw (1.5,-.5) node{c};
    \draw (3,-.5) node{d};
    \draw (0,-1) node{Fig 3.2 Negative shear of a pair of triangles};

    \end{tikzpicture}
    
\end{center}

If two adjacent triangles are known from the context, we will call their shear also the shear of their common side.

To compute the shear of an arbitrary pair of adjacent triangles, let $f\in PSL_2(\mathbb{R})$
be an isometry of $\mathbb{H}^2$ such that $f(a)=-1$, $f(b)=0$ and $f(d)=\infty$. The shear is the hyperbolic distance between the feet of the orthogeodesics and it is invariant under isometries (see Figure 3.2). When the common side has ideal endpoints $0$ and $\infty$, and the other two vertices are $-1$ and $f(c)$, the signed distance between the feet of the orthogeodesics is $\log f(c)$. Since the cross-ratios are invariant under $f\in PSL_2(\mathbb{R})$, it follows that the shear is
$$s=\log\frac{(c-b)(a-d)}{(c-d)(b-a)}.$$ Define the cross ratio $cr(a,b,c,d)=\frac{(c-b)(a-d)}{(c-d)(b-a)}$, then the shear is $\log cr(a,b,c,d)$.

A particularly useful observation is that for a common side with one vertex at $\infty$, if the three  vertices on the real line are equally spaced, the shear on the common side is zero. 

\begin{center}
    \begin{tikzpicture}
        \draw (-2,0)--(3,0);
      \draw (0,0)--(0,3);
      \draw (-1,0)--(-1,3);
      \draw (2,0)--(2,3);
      \draw (-1,0) arc (180:0:.5);
      \draw (0,0) arc (180:0:1);
      \draw [dashed] (-1,0) arc (180:90:1);
      \draw [dashed] (2,0) arc (0:90:2);
      \draw [blue,thick] (0,1)--(0,2);
      \draw (-1,-.5) node{-1};
      \draw (0,-.5) node{0};
      \draw (2,-.5) node{f(c)};
      \draw (0,-1) node{Fig. 3.3 Image of triangles via Mobius transformation $f(z)$};

    \end{tikzpicture}
    
\end{center}

An orientation preserving homeomorphism $h:\mathbb{R}\to\mathbb{R}$ is {\it quasisymmetric map}  if there exists $K\geq 1$ such that for all $x\in\mathbb{R}$, and $t>0$, $$\frac{1}{K}\leq \frac{h(x+t)-h(x)}{h(x)-h(x-t)}\leq K.$$

Let $h:\mathbb{R}\to\mathbb{R}$
be a quasisymmetric map of the real line. For any edge $e$ in the Farey tesselation $\mathcal{F}$, $s_h(e)$ is  the shear of the image $h(e)$ considered as the boundary of the two triangles in the image triangulation $h(\mathcal{F})$. Thus, a quasisymmetric map $h$ induces a shear function
$$
s_h:E(\mathcal{F})\to\mathbb{R},
$$
where $E(\mathcal{F})$ is the set of edges of $\mathcal{F}$.

Given a shear function $s_h$, we can recover the quasisymmetric map $h$ (up to post-composition by an element of $PSL_2(\mathbb{R})$) via the developing map (see \cite{Penner,Saric10}).

In general, an arbitrary  function $s:E(\mathcal{F})\to\mathbb{R}$ induces a developing map $h_s$ from the vertices of the Farey triangulation to $\mathbb{R}$ but $h_s$ does not always extend to a homeomorphism of $\mathbb{R}$-let alone a quasisymmetric map. A necessary and sufficient condition for a shear function to induce a quasisymmetric map is given in \cite{Saric10,Saric21}, which we describe below.

A {\it fan} of edges of $\mathcal{F}$ is the set of all edges of $\mathcal{F}$ that have a common endpoint $p$ on $\mathbb{R}\cup\{\infty\}$. There is a natural ordering on any fan given by the natural orientation (interior points are on the left) of a horocycle whose center is the common endpoint of the fan. We enumerate each fan  by $\{ e_n\}_{n\in\mathbb{Z}}$ such that $e_n$ and $e_{n+1}$ are adjacent edges and $e_n<e_{n+1}$ for the ordering. For any edge $e_m\in\{ e_n\}_{n\in\mathbb{Z}}$ and an integer $k\geq 0$, we define
$$
s(p;m,k)=e^{s(e_m)}\frac{1+e^{s(e_{m+1})}+...e^{s(e_{m+1})+...s(e_{m+k})}}{1+e^{-s(e_{m-1})}+...e^{-s(e_{m-1})-...-s(e_{m-k})}}.
$$
Then (see \cite[Theorem A]{Saric10},\cite[Theorem 1]{Saric21}) a function $s:E(\mathcal{F})\to\mathbb{R}$ induces a quasisymmetric map $h_s:\mathbb{R}\to\mathbb{R}$ if and only if there exists $M\geq 1$ such that for all vertices $p$ of $\mathcal{F}$ and for all $m,k\in\mathbb{Z}$ with $k\geq 0$ we have
\begin{equation}
\label{eqn:qs-cond}
\frac{1}{M}\leq s(p;m,k)\leq M.
\end{equation}

Let $h_n:\mathbb{R}\to\mathbb{R}$ be a sequence of quasisymmetric maps that fix $0$ and $1$, and let $h:\mathbb{R}\to\mathbb{R}$ be a quasisymmetric map that also fixes $0$ and $1$. Then $h_n\to h$ in the Teichm\"uller topology if and only if $h_n\circ h^{-1}$ extend to quasiconformal maps of the upper half-plane with quasiconformal constants $K_n\to 1$ as $n\to\infty$. Let $s_n,s:E(\mathcal{F})\to\mathbb{R}$ be the shear functions corresponding to $h_n,h$. We define
$$
M(s_n,s)=\sup_{p\in\mathcal{F};m,k\in\mathbb{Z},k\geq 0}\Big{\{}\max \Big{(}\frac{s_n(p;m,k)}{s(p;m,k)},\frac{s(p;m,k)}{s_n(p;m,k)}\Big{)}\Big{\}}.$$
Then $h_n\to h$ in the Teichm\"uller topology if and only if $M(s_n,s)\to 1$ as $n\to\infty$ (see \cite[Theorem D]{Saric10}). 

Finally, Parlier and the second author \cite{PS} established a sufficient condition on the shear functions to guarantee the induced maps are quasisymmetric. Namely, if there exists $C>0$ such that for all fans $\{ e\}_{n\in\mathbb{Z}}$ and for all $m,k\in\mathbb{Z}$ with $k\geq 0$ we have
\begin{equation}
\label{eq:qs-suff-cond}
    \Big{|}\sum_{j=m}^{m+k}s(e_j)\Big{|}\leq C
\end{equation}
then the induced map $h_s$ is quasisymmetric. The advantage of the condition (\ref{eq:qs-suff-cond}) compared to (\ref{eqn:qs-cond}) is that it is easier to check. For a condition on shears for $C^{1+\alpha}$-homeomorphisms, see \cite{SWW}.

\section{The shear coordinates for infinite surfaces and some examples}
\label{sec:examples}

Motivated by the ideal triangulations of $\mathbb{H}^2$ and the sufficient condition for quasisymmetry (\ref{eq:qs-suff-cond}), we introduce the notion of a bounded ideal triangulation on $X$.
\begin{definition}
An ideal hyperbolic triangulation $\mathcal{T}$ of $X$ is said to be bounded if both rays of each edge of $\mathcal{T}$ end at punctures of $X$, there is an upper bound on the number of incident edges of $\mathcal{T}$ at punctures of $X$, and the absolute values of the shears on the edges of $\mathcal{T}$ are bounded above. 
\end{definition}

Let $X$ be a  Riemann surface $X$ with a bounded ideal triangulation $\mathcal{T}$. Let $f:X\to X'$ be a quasiconformal map. Then $f(\mathcal{T})$ can be homotoped modulo ideal endpoints into hyperbolic triangulation $\mathcal{T}'$ of $X'$ which has the same number of incident edges at each puncture of $X'$. Note that the sums of the shears at the punctures of $X'$ are necessarily zero (otherwise the end would not be a puncture). We have

\begin{prop}
The Riemann surface $X$ with a bounded ideal hyperbolic triangulation is quasiconformal to the Riemann surface $X_0$ with bounded ideal triangulation whose all shears are zero. Consequently, the covering group of $X_0$ is a subgroup of $PSL_2(\mathbb{Z})$.
\end{prop}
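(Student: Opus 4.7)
The plan is to construct a quasiconformal homeomorphism $f:X\to X_{0}$ by producing a quasisymmetric boundary map at the level of the universal cover and extending it equivariantly. Let $\Gamma<PSL_{2}(\mathbb{R})$ be the Fuchsian covering group, so $X=\mathbb{H}^{2}/\Gamma$, and let $\tilde{\mathcal{T}}$ be the $\Gamma$-invariant lift of $\mathcal{T}$. Each vertex of $\tilde{\mathcal{T}}$ is a parabolic fixed point of $\Gamma$ with infinite cyclic stabilizer, so it carries an infinite fan of incident edges that is periodic under the generating parabolic with period at most $N$, the uniform bound on edge valence at punctures of $X$.

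The first step is to combinatorially identify $\tilde{\mathcal{T}}$ with the Farey tessellation $\mathcal{F}$. Both are locally finite ideal triangulations of $\mathbb{H}^{2}$ whose dual graph is the infinite trivalent tree (the dual is a tree because $\mathbb{H}^{2}$ is simply connected) and in which every vertex carries a full infinite fan of edges. Fixing a base triangle on each side and propagating across shared edges yields a combinatorial isomorphism $\Phi:\mathcal{F}\to\tilde{\mathcal{T}}$. Pulling back the shear function of $\tilde{\mathcal{T}}$ through $\Phi$ produces a shear function $\sigma\in\ell^{\infty}(E(\mathcal{F}),\mathbb{R})$ of norm at most the shear bound of $\mathcal{T}$, and $\Phi$ transports the isometric $\Gamma$-action on $\tilde{\mathcal{T}}$ to a combinatorial action of $\Gamma$ on $\mathcal{F}$, realized by a subgroup $\Gamma_{0}<PSL_{2}(\mathbb{Z})$ of Mobius symmetries of $\mathcal{F}$.

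The crucial step is to verify the sufficient condition (\ref{eq:qs-suff-cond}) for $\sigma$: every partial sum along a fan in $\mathcal{F}$ is uniformly bounded. A fan in $\mathcal{F}$ corresponds under $\Phi$ to the fan at some parabolic fixed point of $\Gamma$, which the parabolic generator partitions into periods of length at most $N$. Because the puncture of $X$ is a genuine cusp, the product of edge-hyperbolic translations around one period is parabolic, which forces the shear sum within a period to vanish. Hence any partial sum $\sum_{j=m}^{m+k}\sigma(e_{j})$ decomposes into full periods contributing zero plus an incomplete tail of fewer than $N$ terms, giving the uniform bound $(N-1)\|\sigma\|_{\infty}$. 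By the sufficient condition, the developing map $h_{\sigma}:\mathbb{R}\to\mathbb{R}$ is quasisymmetric.

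Finally, I would extend $h_{\sigma}$ to a quasiconformal homeomorphism $F:\mathbb{H}^{2}\to\mathbb{H}^{2}$ via the conformally natural Douady--Earle extension. Naturality ensures $F\circ\gamma_{0}=\gamma\circ F$ for each matched pair $\gamma_{0}\in\Gamma_{0},\gamma\in\Gamma$, and by construction $F$ carries $\mathcal{F}$ onto $\tilde{\mathcal{T}}$ after a base-triangle normalization. Setting $X_{0}:=\mathbb{H}^{2}/\Gamma_{0}$, the descent of $F^{-1}$ furnishes the required quasiconformal map $X\to X_{0}$, and the triangulation of $X_{0}$ inherited from $\mathcal{F}$ has all shears zero; the covering group $\Gamma_{0}<PSL_{2}(\mathbb{Z})$ then gives the final assertion. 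The main obstacle lies in the partial-sum estimate, where both the uniform edge-valence bound and the puncture-sum-zero constraint are essential to invoke the sufficient condition.
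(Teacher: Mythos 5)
Your proposal is correct and follows essentially the same route as the paper: lift $\mathcal{T}$ to $\mathbb{H}^2$, identify it combinatorially with the Farey tessellation, use the boundedness of shears together with the vanishing of shear sums over each parabolic period to verify quasisymmetry of the induced boundary map, and extend equivariantly to a quasiconformal conjugacy between a subgroup of $PSL_2(\mathbb{Z})$ and the covering group of $X$. The only difference is that you inline the partial-sum verification of the condition (\ref{eq:qs-suff-cond}) (and name the Douady--Earle extension), whereas the paper delegates the quasisymmetry step to its Theorem \ref{thm:Teich-param}, whose proof contains exactly that estimate.
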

\begin{proof}
Lift $\mathcal{T}$ to an ideal triangulation $\tilde{\mathcal{T}}$ of $\mathbb{H}^2$. The shears on $\tilde{\mathcal{T}}$ are invariant under the covering group of $X$. Let $h:\mathbb{R}\to\mathbb{R}$ be a homeomorphism which is induced by mapping the triangles of the Farey triangulation $\mathcal{F}$ onto the triangles of $\tilde{\mathcal{T}}$. 
Since the shear function of $X$ is in $\ell^{\infty}(E(\mathcal{F}),\mathbb{R})$, by Theorem \ref{thm:Teich-param}, the homeomorphism $h$ is  quasisymmetric. By the invariance of the shear function on $\tilde{\mathcal{T}}$, the quasisymmetric map $h$ extends to a quasiconformal map of $\mathbb{H}^2$ that conjugates a subgroup of $PSL_2(\mathbb{Z})$ onto the covering group of $X$. The quotient of $\mathbb{H}^2$ by this subgroup of $PSL_2(\mathbb{Z})$ is a Riemann surface $X_0$ with an ideal triangulation $\mathcal{T}_0$ with zero shears (the quotient of $\mathcal{F}$).
\end{proof}

We discuss examples of surfaces (with bounded geometry without bounded pants decompositions)  where shear calculations can provide insight into their structure. The first example is a flute surface introduced by Basmajian-Parlier-Vlamis \cite{BPV}  which is homeomorphic to $\mathbb{R}^2-\mathbb{Z}^2$ with \textit{bounded geometry} but where there is no bounded pants decomposition. We show that this surface is parabolic and that its covering group is a subgroup of $PSL_2(\mathbb{Z})$. Next, we consider a surface introduced by Kinjo\cite{Kinjo} that also has a bounded geometry and no bounded pants decomposition but is not parabolic, and we compute shears for this surface. 

\subsection{A Parabolic Flute Surface with Bounded Ideal Triangulation}(see \cite{BPV}) For any real number $b>arcsinh(1)$, there exists a unique right-angled pentagon with two adjacent sides of lengths $b$. As the values of $b$ approach $arcsinh(1)$, the pentagon degenerates into a right-angled ideal quadrilateral called a Lambert quadrilateral. We start to construct a surface $X$ by gluing by isometries four copies of a Lambert quadrilateral, $b=arcsinh(1)$, along their infinite sides (see Figure 4.1.1). The gluings are such that the obtained surface $X_b$ has one boundary, a closed piecewise hyperbolic arc with four geodesic arcs of length $2b$ meeting at right angles. The other boundary component is the single puncture, and $X_b$ is called a tile.  The surface $X$
is formed by isometrically gluing countably many isometric copies of $X_b$ along their piecewise geodesic boundaries such that each vertex belongs to four copies of $X_b$, which makes the angle $2\pi$. The surface $X$ has a subgroup of hyperbolic isometries isomorphic to $\mathbb{Z}^2$ that maps a single copy of $X_b$ to any other copy.
The subgroup of isometries also maps one puncture to any other puncture and $X$ is homeomorphic to $\mathbb{R}^2-\mathbb{Z}^2$. In fact, both $X$ and  $\mathbb{R}^2-\mathbb{Z}^2$ are conformal coverings of the square torus and therefore they are conformal themselves.

\begin{center}

\begin{tikzpicture}
\draw (-2.2,1) node{b};
\draw (-1,2.2) node{b};

\draw (0,-2.5) node{Fig. 4.1.1 $X_b$ tile of $X$};

\draw (0,0) node[circle,fill,inner sep=1pt,label=right:{}]{};
\draw (-2,2)--(2,2);
\draw (-2,2)--(-2,-2);
\draw (2,2)--(2,-2);
\draw (-2,-2)--(2,-2);
\draw (2,0)--(-2,0);
\draw (0,2)--(0,-2);
\draw (-1.8,2)--(-1.8,1.8);
\draw (-1.8,1.8)--(-2,1.8);
\draw (-1.8,0)--(-1.8,0.2);
\draw(-1.8,0.2)--(-2,0.2);
\draw(-0.2,2)--(-0.2,1.8);
\draw(-0.2,1.8)--(0,1.8);

\end{tikzpicture}

\end{center}

\begin{prop}
    There exists a triangulation $\mathcal{T}$ of the Riemann surface $X$  described abovethat is bounded, in particular such that the shears on all edges are zero. Thus, the covering group is a subgroup of $PSL(2,\mathbb{Z})$.
\end{prop}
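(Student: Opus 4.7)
The plan is to exploit the reflective isometries of $X$ to construct a bounded ideal triangulation with all shears zero, from which the covering group statement follows by the developing map.

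First I would verify the isometric symmetries of $X$. Each Lambert quadrilateral carries a reflective symmetry across its $BD$-diagonal (swapping the two right-angle vertices $A,C$), and the four Lamberts comprising each tile $X_b$ are glued so that $X_b$ inherits the full dihedral symmetry $D_4$ about its central puncture. Since the tile gluings are isometric and compatible with these symmetries, the horizontal, vertical, and two diagonal reflections through each puncture extend to global isometries of $X$; similarly, reflections across the midlines between any two adjacent rows or columns of punctures are global isometries.

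Next I would define $\mathcal{T}$. Labeling the punctures by $\mathbb{Z}^2$ according to the tile structure, at each puncture $p_{m,n}$ I take six ideal hyperbolic geodesics joining it to $p_{m\pm 1,n}$, $p_{m,n\pm 1}$, $p_{m+1,n+1}$ and $p_{m-1,n-1}$. Each such edge is uniquely specified in its homotopy class, and by the reflective symmetries it lies along a horizontal, vertical, or NE--SW diagonal axis through its two endpoints. Using the tile decomposition and the reflective symmetries, I would check that these geodesics are pairwise disjoint and cut $X$ into ideal triangles, so $\mathcal{T}$ is a bounded ideal triangulation with exactly six edges at each puncture.

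To see that the shears vanish, fix an edge $e$ with adjacent triangles $T_1,T_2$. In each of the three cases (horizontal, vertical, diagonal) there is a reflective isometry $\rho_e$ of $X$ fixing $e$ pointwise and swapping $T_1$ with $T_2$. Then $\rho_e$ carries the orthogeodesic dropped from the opposite vertex of $T_1$ to that of $T_2$, so both feet must coincide with the unique fixed point of $\rho_e$ on $e$; the shear of $e$ is therefore zero. This also puts all shears in $\ell^\infty$, confirming that $\mathcal{T}$ is bounded in the sense of the paper.

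Finally, with $\mathcal{T}$ having zero shears, the lift $\tilde{\mathcal{T}}$ to the universal cover coincides up to $PSL_2(\mathbb{R})$-conjugation with the Farey triangulation $\mathcal{F}$ of $\mathbb{H}^2$; the covering group of $X$ preserves $\tilde{\mathcal{T}}$, and the orientation-preserving stabilizer of $\mathcal{F}$ in $PSL_2(\mathbb{R})$ is exactly $PSL_2(\mathbb{Z})$, so the covering group is a subgroup of $PSL_2(\mathbb{Z})$.

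The main obstacle will be the verification in the second and third steps that the chosen six geodesics per puncture genuinely form a valid triangulation and that, for each edge, the claimed reflective isometry swaps the two adjacent triangles rather than sending one of them outside the triangulation. Care is required at the lattice corners where four tiles meet at angle $2\pi$, since the NE--SW diagonal of one square and the NW--SE diagonal of the adjacent one would cross there; the consistent choice of a single diagonal direction across all squares is what resolves this.
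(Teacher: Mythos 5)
Your strategy---force each shear to vanish by exhibiting, for every edge $e$, a reflection of $X$ that fixes $e$ pointwise and interchanges the two adjacent triangles---is exactly the mechanism of the paper's proof, and the criterion itself is correct: if the hyperbolic reflection in the geodesic $e=(b,d)$ carries the opposite vertex $a$ of one adjacent triangle to the opposite vertex $c$ of the other, the two feet of the orthogeodesics coincide and $s(e)=0$. The gap is in your choice of diagonals. With every square given the NE--SW diagonal, the claimed reflection $\rho_e$ does not exist for the horizontal and vertical edges. Take $e=[p_{0,0},p_{1,0}]$: the only isometry of $X$ fixing $e$ pointwise is the reflection $(x,y)\mapsto(x,-y)$ across the horizontal line of punctures, and it sends the adjacent triangle $(p_{0,0},p_{1,0},p_{1,1})$ to $(p_{0,0},p_{1,0},p_{1,-1})$, whereas the triangle of your triangulation below $e$ is $(p_{0,0},p_{1,0},p_{0,-1})$. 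This is not a repairable bookkeeping slip: those shears are genuinely nonzero. If all shears of the all-NE triangulation vanished, $X$ would be isometric to the zero-shear surface carrying that combinatorial triangulation; combinatorially this is the $A_2$ (equilateral) triangulation of the plane, which admits the order-$6$ orientation-preserving automorphism $(1,0)\mapsto(1,1)$, $(0,1)\mapsto(-1,0)$ fixing a vertex, and on a zero-shear surface every orientation-preserving combinatorial automorphism is realized by a conformal automorphism. But every conformal automorphism of $X\cong\mathbb{C}\setminus(\mathbb{Z}+i\mathbb{Z})$ extends to $\mathbb{C}$, so $\mathrm{Aut}(X)\cong(\mathbb{Z}+i\mathbb{Z})\rtimes\mathbb{Z}_4$ has no element of order $6$. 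What your symmetries actually yield is $s=0$ on the diagonals and $s(\mathrm{horizontal})=-s(\mathrm{vertical})\neq 0$ (the reflection $(x,y)\mapsto(y,x)$ preserves your triangulation but, being anticonformal, negates shears), which is consistent with the vanishing of the sum of shears around each puncture but not with all shears vanishing. Your $\mathcal{T}$ is therefore still a bounded ideal triangulation (the shears take only three values), but the ``all shears zero'' clause and hence the $PSL(2,\mathbb{Z})$ conclusion do not follow from your argument.

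The repair is exactly the paper's choice: place the diagonals in a zig-zag pattern so that the configuration is mirror-symmetric across every horizontal and vertical line of punctures, i.e., two squares sharing a horizontal or vertical edge carry mirror-image diagonals (within each row the diagonals alternate NE, SE, NE, \dots, and consecutive rows are reflections of one another). Then every edge---horizontal, vertical, or diagonal---lies on a global mirror line of $X$ whose reflection does interchange the two adjacent triangles, and your vanishing argument goes through verbatim; the vertex degrees become $8$ and $4$ instead of a constant $6$, which is harmless for boundedness. Your closing paragraph correctly identifies the diagonal choice as the delicate point, but resolves it in the wrong direction: the ``consistent single diagonal direction'' is precisely what destroys the reflection symmetry you need.
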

\begin{proof}
We use the homeomorphism of $X$ and $\mathbb{R}^2-\mathbb{Z}^2$ to identify the subgroup of the isometry group which permutes the tiles $X_b$ of $X$ with the action of $\mathbb{Z}^2$ on $\mathbb{R}^2-\mathbb{Z}^2$.  Fix one puncture $v_1$ to be identified with the origin $(0,0)\in\mathbb{R}^2$. Choose 3 punctures $v_2,v_3,v_4$ such that $v_2,v_3,v_4$ are images of $(1,0),(0,1),(1,1)\in\mathbb{Z}^2$ of $v_1$ under the isometric action of $\mathbb{Z}^2$ that forms the surface. To define an ideal triangulation of $X$, form an edge (ideal hyperbolic geodesic) between punctures if they contain sides of a Lambert quadrilateral along with an edge between $v_1$ and $(1,1)v_1$. The triangulation of the rest of the surface is as in Figure 4.1.2 where the added edges are in a zig-zag fashion in the horizontal direction.  This forms a graph with vertices with degrees $8$ and $4$ as in Figure 4.1.2 below.

\begin{center}

\begin{tikzpicture}

\draw (-.5,1.2) node{b};

\draw (-1.2,0.5) node{b};

\draw (1,-3) node{Fig. 4.1.2 Triangulation of $X_b$ with zero shears.};

\draw (0,0) node[circle,fill,inner sep=1pt,label=right:{}]{};
\draw (4,2) node[circle,fill,inner sep=1pt,label=right:{}]{};
\draw (4,0) node[circle,fill,inner sep=1pt,label=right:{}]{};
\draw (4,-2) node[circle,fill,inner sep=1pt,label=right:{}]{};
\draw (2,2) node[circle,fill,inner sep=1pt,label=right:{}]{};
\draw (2,0) node[circle,fill,inner sep=1pt,label=right:{}]{};
\draw (2,-2) node[circle,fill,inner sep=1pt,label=right:{}]{};
\draw (0,2) node[circle,fill,inner sep=1pt,label=right:{}]{};
\draw (0,-2) node[circle,fill,inner sep=1pt,label=right:{}]{};
\draw (-2,2) node[circle,fill,inner sep=1pt,label=right:{}]{};
\draw (-2,0) node[circle,fill,inner sep=1pt,label=right:{}]{};
\draw (-2,-2) node[circle,fill,inner sep=1pt,label=right:{}]{};

\draw (-2,2)--(2,2);
\draw (-2,2)--(-2,-2);
\draw (2,2)--(2,-2);
\draw (-2,-2)--(2,-2);
\draw (2,0)--(-2,0);
\draw (0,2)--(0,-2);
\draw (2,2)--(4,2);
\draw (4,2)--(4,-2);
\draw(2,0)--(4,0);
\draw (4,-2)--(0,-2);
\draw (0,0)--(-2,2);
\draw (0,0)--(-2,-2);
\draw(0,0)--(2,2);
\draw (0,0)--(2,-2);
\draw (2,2)--(4,0);
\draw (4,0)--(2,-2);
\draw (0,2)--(0,2.2);
\draw (-2,2)--(-1.8,2.2);
\draw (-2,2)--(-2,2.2);
\draw (-2,2)--(-2.2,2);
\draw (-2,2)--(-2.2,2.2);
\draw (-2,2)--(-2.2,1.8);
\draw (-2,0)--(-2.2,0);
\draw (-2,-2)--(-2.2,-2);
\draw (-2,-2)--(-2,-2.2);
\draw (-2,-2)--(-2.2,-2.2);
\draw (-2,-2)--(-1.8,-2.2);
\draw (-2,-2)--(-2.2,-1.8);
\draw (0,-2)--(0,-2.2);
\draw (2,2)--(2,2.2);
\draw (2,2)--(1.8,2.2);
\draw (2,2)--(2.2,2.2);
\draw(4,2)--(4,2.2);
\draw (4,2)--(4.2,2);
\draw (4,0)--(4.2,0);
\draw (4,0)--(4.2,0.2);
\draw (4,0)--(4.2,-0.2);
\draw (4,-2)--(4.2,-2);
\draw (4,-2)--(4,-2.2);
\draw (2,-2)--(2,-2.2);
\draw (2,-2)--(1.8,-2.2);
\draw (2,-2)--(2.2,-2.2);

\draw [dashed] (1,2)--(1,-2);
\draw [dashed] (3,2)--(3,-2);
\draw [dashed] (4,1)--(-2,1);
\draw [dashed] (4,-1)--(-2,-1);
\draw [dashed] (-1,2)--(-1,-2);

\end{tikzpicture}

\end{center}
Since any two adjacent triangles are reflections of each other over their shared edge, the shear of every edge must be zero. Therefore, this is a bounded triangulation such that the shear of every edge is zero and by above, the covering group is a subgroup of $PSL(2,\mathbb{Z})$. 
\end{proof}

A Green's function on $X$ is a harmonic function $u:X\setminus\{ z_0\}\to\mathbb{R}$ such that $u(z)\asymp \log \frac{1}{|z-z_0|}$ as $z\to z_0$, and $u(z)\to 0$ as $z\to\partial_{\infty}X$.  A Riemann surface without Green's function is called {\it parabolic}. There are numerous equivalent conditions for a Riemann surface to be parabolic. For example, a Riemann surface $X$ is parabolic if and only if the Brownian motion on $X$ is ergodic (see Ahlfors-Sario \cite{AhlforsSario}) if and only if the geodesic flow on the unit tangent bundle $T^1X$ of $X$ is ergodic (Hopf-Tsuji-Sullivan theorem, see \cite{Sullivan}) if and only if the Poincar\'e series of the Fuchsian covering group $\Gamma$ of $X$ is divergent.

We prove that $X$ is a parabolic surface.

\begin{prop}
    The Riemann surface $X$ as described above is parabolic.
\end{prop}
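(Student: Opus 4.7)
The plan is to exploit the conformal identification $X\cong\mathbb{C}\setminus\mathbb{Z}^2$ recorded in the construction above and to reduce the parabolicity of $X$ to the classical potential-theoretic fact that countable subsets of the Riemann sphere have vanishing logarithmic capacity.

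First I would note that parabolicity is a conformal invariant of a Riemann surface, so it suffices to show that $\Omega:=\mathbb{C}\setminus\mathbb{Z}^2$, viewed as an open Riemann surface, is parabolic. Regarded as a subdomain of $\hat{\mathbb{C}}$, its complement is the countable set $E:=\mathbb{Z}^2\cup\{\infty\}$. I would then invoke the classical criterion from potential theory (Myrberg, Ahlfors--Sario): a subdomain $\Omega\subset\hat{\mathbb{C}}$ is of parabolic type if and only if its complement $\hat{\mathbb{C}}\setminus\Omega$ has logarithmic capacity zero. Since singletons in $\hat{\mathbb{C}}$ are polar and the family of polar sets is a $\sigma$-ideal, $E$ is polar; hence $\Omega$, and therefore $X$, is parabolic.

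An equivalent probabilistic derivation, aligned with the Brownian motion characterization quoted in the excerpt, runs as follows. Standard two-dimensional Brownian motion on $\mathbb{C}$ is recurrent and almost surely avoids every prescribed countable set (each point being polar for 2D BM, and a countable union of null events is null). Hence BM started from any $z_0\in\Omega$ remains in $\Omega$ for all time and is recurrent there; by the Hopf--Tsuji--Sullivan theorem recalled in the excerpt, this is equivalent to the divergence of the Poincar\'e series of the Fuchsian covering group, and hence to parabolicity of $X$.

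The point requiring most care is making the conformal identification $X\cong\mathbb{C}\setminus\mathbb{Z}^2$ rigorous: because $X$ is built as an infinite isometric gluing of Lambert quadrilaterals carrying a $\mathbb{Z}^2$-action by hyperbolic isometries, the quotient $X/\mathbb{Z}^2$ is conformally a once-punctured torus whose modulus is forced by the square symmetry of the tile $X_b$ to be that of the standard square torus, and the resulting maximal abelian ($\mathbb{Z}^2$-)cover is conformally $\mathbb{C}\setminus\mathbb{Z}^2$. Once this identification is granted, the rest of the argument is a short invocation of standard potential theory and presents no further obstacle. One should resist the temptation to prove parabolicity directly from the inclusion of the covering group in $PSL_2(\mathbb{Z})$, since an infinite-index subgroup of a divergence-type Fuchsian group need not itself be of divergence type.
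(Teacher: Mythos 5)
Your proof is correct, but it takes a genuinely different route from the paper's. You reduce everything to the conformal identification $X\cong\mathbb{C}\setminus\mathbb{Z}^2$ (which the paper does assert in the construction, via the observation that both surfaces are the $\mathbb{Z}^2$-cover of the square once-punctured torus) and then invoke the classical Myrberg/Ahlfors--Sario criterion: a plane domain is parabolic if and only if its complement in $\hat{\mathbb{C}}$ has logarithmic capacity zero, and the countable closed set $\mathbb{Z}^2\cup\{\infty\}$ is polar. The paper instead argues intrinsically in the hyperbolic metric: it builds nested piecewise-geodesic curves $C_n$ from the boundaries of successive rings of tiles, thickens them to pairwise disjoint annuli $A_n$ of area $O(n)$ and definite width, obtains the lower bound $ext(\Gamma_n)\geq C/n$ for the family of curves crossing $A_n$, and concludes by the serial rule that the extremal length of the family of curves escaping to the non-simple end diverges. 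Your approach is shorter and essentially immediate once the uniformization $X\cong\mathbb{C}\setminus\mathbb{Z}^2$ is granted, but it leans entirely on that identification (which in turn requires the order-four symmetry argument pinning down the square modulus of $X/\mathbb{Z}^2$ --- or, more robustly, the observation that the $\mathbb{Z}^2$-cover of any once-punctured torus is $\mathbb{C}$ minus a lattice, which suffices). The paper's extremal-length argument never uses the planar uniformization and is the kind of quasiconformally robust, tile-counting estimate that generalizes to surfaces admitting no such explicit model. Your closing caveat --- that one cannot deduce parabolicity merely from the covering group being an infinite-index subgroup of $PSL_2(\mathbb{Z})$, since divergence type does not pass to infinite-index subgroups --- is well taken and consistent with the paper, which indeed gives a separate argument rather than relying on that inclusion. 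The only loose point is your secondary probabilistic paragraph: recurrence of Brownian motion is directly equivalent to the absence of a Green's function, so routing that remark through the Hopf--Tsuji--Sullivan theorem is an unnecessary detour; but since your capacity argument already stands on its own, this does not affect correctness.
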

\begin{proof}

Any puncture $x$ of the surface $X$ is on the ideal boundary of a tile isometric to $X_b$. Each geodesic arc (of length $2b$) of the piecewise geodesic boundary of $X_b$ is between two hypercycles on the distance $\frac{1}{4}b$. We form a rectangle by extending by $\frac{1}{4}b$ both hypercycles in both direction and capping off by geodesic arcs orthogonal to them. 

\begin{center}

\begin{tikzpicture}
\label{fig:par}
\draw (0,-4.75) node{Fig. 4.1.3 $A_1$ and $A_2$};

\draw (0,0) node[circle,fill,inner sep=1pt,label=right:{}]{};
\draw (4,2) node[circle,fill,inner sep=1pt,label=right:{}]{};
\draw (4,0) node[circle,fill,inner sep=1pt,label=right:{}]{};
\draw (4,-2) node[circle,fill,inner sep=1pt,label=right:{}]{};
\draw (2,2) node[circle,fill,inner sep=1pt,label=right:{}]{};
\draw (2,0) node[circle,fill,inner sep=1pt,label=right:{}]{};
\draw (2,-2) node[circle,fill,inner sep=1pt,label=right:{}]{};
\draw (0,2) node[circle,fill,inner sep=1pt,label=right:{}]{};
\draw (0,-2) node[circle,fill,inner sep=1pt,label=right:{}]{};
\draw (-2,2) node[circle,fill,inner sep=1pt,label=right:{}]{};
\draw (-2,0) node[circle,fill,inner sep=1pt,label=right:{}]{};
\draw (-2,-2) node[circle,fill,inner sep=1pt,label=right:{}]{};
\draw (-4,0) node[circle,fill,inner sep=1pt,label=right:{}]{};
\draw (-4,2) node[circle,fill,inner sep=1pt,label=right:{}]{};
\draw (-4,4) node[circle,fill,inner sep=1pt,label=right:{}]{};
\draw (-4,-2) node[circle,fill,inner sep=1pt,label=right:{}]{};
\draw (-4,-4) node[circle,fill,inner sep=1pt,label=right:{}]{};
\draw (-2,-4) node[circle,fill,inner sep=1pt,label=right:{}]{};
\draw (-2,4) node[circle,fill,inner sep=1pt,label=right:{}]{};
\draw (0,-4) node[circle,fill,inner sep=1pt,label=right:{}]{};
\draw (0,4) node[circle,fill,inner sep=1pt,label=right:{}]{};
\draw (2,4) node[circle,fill,inner sep=1pt,label=right:{}]{};
\draw (2,-4) node[circle,fill,inner sep=1pt,label=right:{}]{};
\draw (4,4) node[circle,fill,inner sep=1pt,label=right:{}]{};
\draw (4,-4) node[circle,fill,inner sep=1pt,label=right:{}]{};

\draw (-4.5,0)--(4.5,0);
\draw (-4.5,1)--(4.5,1);
\draw (-4.5,2)--(4.5,2);
\draw (-4.5,3)--(4.5,3);
\draw (-4.5,4)--(4.5,4);

\draw (-4.5,-1)--(4.5,-1);
\draw (-4.5,-2)--(4.5,-2);
\draw (-4.5,-3)--(4.5,-3);
\draw (-4.5,-4)--(4.5,-4);

\draw (-4,4.5)--(-4,-4.5);
\draw (-3,4.5)--(-3,-4.5);
\draw (-2,4.5)--(-2,-4.5);
\draw (-1,4.5)--(-1,-4.5);
\draw (0,4.5)--(0,-4.5);
\draw (1,4.5)--(1,-4.5);
\draw (2,4.5)--(2,-4.5);
\draw (3,4.5)--(3,-4.5);
\draw (4,4.5)--(4,-4.5);

\draw [dashed] (-4.5,.75)--(4.5,.75);
\draw [dashed] (-4.5,1.25)--(4.5,1.25);
\draw [dashed] (-4.5,2.75)--(4.5,2.75);
\draw [dashed] (-4.5,3.25)--(4.5,3.25);

\draw [dashed] (-4.5,-.75)--(4.5,-.75);
\draw [dashed] (-4.5,-1.25)--(4.5,-1.25);
\draw [dashed] (-4.5,-2.75)--(4.5,-2.75);
\draw [dashed] (-4.5,-3.25)--(4.5,-3.25);

\draw [dashed] (.75,-4.5)--(.75,4.5);
\draw [dashed] (1.25,-4.5)--(1.25,4.5);
\draw [dashed] (2.75,-4.5)--(2.75,4.5);
\draw [dashed] (3.25,-4.5)--(3.25,4.5);

\draw [dashed] (-.75,-4.5)--(-.75,4.5);
\draw [dashed] (-1.25,-4.5)--(-1.25,4.5);
\draw [dashed] (-2.75,-4.5)--(-2.75,4.5);
\draw [dashed] (-3.25,-4.5)--(-3.25,4.5);

\fill[black,opacity=0.3] (-1.25,-1.25) -- (-.75,-1.25) -- (-.75,1.25) -- (-1.25,1.25);

\fill[black,opacity=0.3] (-.75,-.75) -- (-.75,-1.25) -- (1.25,-1.25)--(1.25,-.75);

\fill[black,opacity=0.3] (.75,1.25) -- (1.25,1.25) -- (1.25,-.75)--(.75,-.75);

\fill[black,opacity=0.3] (.75,1.25) -- (.75,.75) -- (-.75,.75)--(-.75,1.25);

\fill[black,opacity=0.3] (-3.25,-3.25) -- (-2.75,-3.25) -- (-2.75,3.25)--(-3.25,3.25);

\fill[black,opacity=0.3] (-2.75,-2.75) -- (-2.75,-3.25) -- (3.25,-3.25)--(3.25,-2.75);

\fill[black,opacity=0.3] (2.75,-2.75) -- (3.25,-2.75) -- (3.25,3.25)--(2.75,3.25);

\fill[black,opacity=0.3] (2.75,3.25) -- (2.75,2.75) -- (-2.75,2.75)--(-2.75,3.25);

\end{tikzpicture}

\end{center}

We form a sequence $C_n$ of simple closed piecewise geodesic polygonal arcs by taking the geodesic arcs of the boundaries of the tiles. First, take $C_1$ to be exactly the boundary of a single tile. Then, $C_1$ separates a single puncture from the rest of the punctures. In the next step, $C_2$ is the piecewise geodesic boundary of the union of the tiles attached to $C_1$ (including attached tiles to the vertices of $C_1$). If we are given $C_n$, the next piecewise closed arc $C_{n+1}$ is the boundary of the tiles attached to $C_n$. Let $A_n$ be a topological annulus formed by the union of the rectangles (defined above) containing the geodesic arcs of $C_n$ considered as boundaries of the tiles (see Figure 4.1.3). Then $A_n$ is disjoint from $A_{n+1}$ and any curve leaving $A_1$ and going to the non-simple end of $X$, i.e., not the puncture,  must cross each $A_n$ for $n>1$.

The hyperbolic length of each $C_n$ is $(4n-2)b$ and it encloses $(2n-1)^{2}$ punctures. 
Let $\Gamma_n$ be the family of all curves in the annular region $A_n$ connecting its boundary components. The {\it extremal length} of $\Gamma_n$ is defined by the following (see \cite{Ahlfors, GarnettMarshall}):

$$ext(\Gamma_n)=\sup_{\rho}\frac{L_{\rho}(\Gamma_n)^2}{A(\rho)},$$
where the supremum is over all Borel measurable functions $\rho \geq 0$ in $A_n$, $L_{\rho}(\Gamma_n)=\inf_{\gamma\in \Gamma_n}\int_\gamma \rho (z)|dz|$,  and $A(\rho)=\int_{A_n}\rho ^2(z)dxdy$ with $0<A(\rho )<\infty$.

To find a lower bound on $ext(\Gamma_n)$, we choose a metric $\rho$ to be the hyperbolic metric on $X$ and compute $L_{\rho}(\Gamma_n)/A(\rho )$. The hyperbolic area $A(\rho )$ of $A_n$ is at most $32bn$ and the minimal length $L_{\rho}(\gamma )$ is the minimal hyperbolic distance between two boundary components of $A_n$, which $\frac{1}{2}b$. Therefore
$$
ext(\Gamma_n)\geq \frac{C}{n}$$
for some constant $C>0$ independent of $n$. 

Let $\Gamma$ be the family of all curves connecting $A_1$ to the non-simple end of $X$. Then each curve in $\Gamma$ overflows a curve in $\Gamma_n$ for each $n$. By the serial rule (see \cite[page 135]{GarnettMarshall}) we have
$$
ext(\Gamma )\geq\sum_{n=2}^{\infty}ext(\Gamma_n)\geq\sum_{n=2}^{\infty}\frac{C}{n}=\infty .$$ By \cite{AhlforsSario}, the surface $X$ is parabolic.
\end{proof}

\subsection{Non-parabolic Flute Surface with Bounded Ideal Triangulation}

Let $T$ be a hyperbolic triangle with angles $\alpha=\frac{\pi}{2}$, $\beta=\frac{\pi}{4}$, $\gamma=\frac{\pi}{8}$  in a clockwise direction. Let  $a, b, c$  be the sides of $T$ opposite the angles $\alpha$, $\beta$, $\gamma$. Since $\frac{1}{\alpha}+\frac{1}{\beta}+\frac{1}{\gamma}<1$, it is known that successive reflections about the sides of $T$ tesselate the hyperbolic plane (for example, see Caratheodory \cite[pg. 177-182]{Caratheodory}). 
For the simplicity of notation, denote by $a,b,c$ the hyperbolic reflections in the sides $a,b,c$, respectively. Let $\Delta(2,4,8)$ be the \textit{Triangle Group} generated by reflections  $a,b,c$ with the represention $\langle a,b,c:a^2=b^2=c^2=(bc)^2=(ac)^4=(ab)^8=1\rangle$. The triangle $T$ is a fundamental domain of $\Delta(2,4,8)$. 

The triangle group $\Delta(2,4,8)$ has an index 2 subgroup consisting of orientation-preserving elements. 
The element $B=ca$  is a clockwise rotation around $z_b$ by the angle $2\beta=\frac{\pi}{2}$.   Similarly $A=bc$ is a clockwise rotation about $z_{a}$ by the angle $2\alpha=\pi$ and $C=ab$ is a clockwise rotation about $z_c$ by the angle $2\gamma =\frac{\pi}{4}$. As $a,b,c$  have order $2$ in $\Delta(2,4,8)$, we have a relation $ABC=1$ implying $CA=B^{-1}$.  
 
This orientation preserving subgroup $G$  of  $\Delta(2,4,8)$ is called the \textit{Von-Dyke Group} and has a representation $G=\langle A,C : A^2=C^8=(AC)^4=1\rangle$. The triangle $\Delta=T\cup bT$ with vertices $z_b,z_c$ and $z_{\sigma}=bz_b$ is a fundamental domain for $G$.

Now, we recall the construction of a non-parabolic surface from Kinjo \cite[section 2]{Kinjo}. Let $\epsilon>0$ be smaller than half of the minimum distances of $z_b,z_c,z_{\sigma}$ from opposite sides of $\Delta$. Place the triangle $T$ in $\mathbb{H}^2$ such that $z_a=i$ and $z_c$ is on the positive imaginary axis above $i$. 
Let $c_{\epsilon}$ be the point on the segment $[z_a,z_c]$ with hyperbolic distance $\epsilon$ away from $z_c$. Let $b_{\epsilon}$ be the point on the distance $\epsilon$ from $z_b$ along the orthogonal geodesic arc from $z_b$ to the opposite side in $\Delta$. Let $\sigma_{\epsilon}$ be the point on the distance $\epsilon$ from $z_{\sigma}$ along orthogonal  geodesic arc from $z_{\sigma}$ to the opposite side in $\Delta$. 
The surface $X_{\Delta}=\mathbb{H}^2-\{gz_b,gz_c,gz_{\sigma},gb_{\epsilon},gc_{\epsilon},g\sigma_{\epsilon}:\forall g\in G\}$ is parabolic since $h(z)=\log |\frac{z-i}{z+i}|$ is a Green's function on $X_{\Delta}$ (see \cite[ pg. 203-206]{AhlforsSario}).

Now we construct a triangulation $\mathcal{T}_{\Delta}=(V_{\Delta},E_{\Delta})$ of $X_{\Delta}$. The vertex set, $V_{\Delta}$, are the punctures of $X_{\Delta}$, partitioned into $\Delta$-vertices and $\epsilon$-vertices: vertices of $g\Delta$ and vertices $\epsilon$ distance from $\Delta$-vertices in $\mathbb{H}^2$ respectively.  For the edge set, $E_{\Delta}$, we include the following edges. Within every $g(\Delta )$, connect $\epsilon$-vertices together by an edge. Every $\Delta$-vertex connect by an edge to all $\epsilon$-vertices $\epsilon$ distance away in $\mathbb{H}^2$; moreover, connect these $\epsilon$-vertices in sequence cyclically. The complement of the edges introduced so far consists of triangles that have one vertex in $\{ g(z_b),g(z_c),g(z_{\sigma}):g\in G\}$ and triangles with three vertices $\{ (g(b_{\epsilon}),g(c_{\epsilon}),g(\sigma_{\epsilon})):g\in G\}$, and rectangles that intersect at edges of $g(\Delta )$, for some $g\in G$. 
Lastly, include edges $[gb_{\epsilon},gCc_{\epsilon}]$ and $[g\sigma_{\epsilon},gA\sigma_{\epsilon}]$ for $g\in G$, 
the diagonals of the remaining rectangles. See figure $4.2.1$ below for the triangles incident to $\Delta$.

\begin{center}
    \begin{tikzpicture}

\draw (0,-1.25) node{Fig. 4.2.1 Triangles in $\mathcal{T}_{\Delta}$ incident to $\Delta$ };

\tkzDefPoint(-4,0){A}\tkzDefPoint(-1.5,1.5){B}\tkzDefPoint(0,4){C}
\tkzCircumCenter(A,B,C)\tkzGetPoint{O}
\tkzDrawArc [dashed](O,A)(C);

\tkzDefPoint(0,4){A}\tkzDefPoint(1.5,1.5){B}\tkzDefPoint(4,0){C}
\tkzCircumCenter(A,B,C)\tkzGetPoint{O}
\tkzDrawArc [dashed](O,A)(C);

\draw [dashed](-4,0)--(4,0);

\tkzDefPoint(-2.25,.35){A}\tkzDefPoint(-1,1){B}\tkzDefPoint(0,2){C}
\tkzCircumCenter(A,B,C)\tkzGetPoint{O}
\tkzDrawArc (O,A)(C);

\tkzDefPoint(0,2){A}\tkzDefPoint(1,1){B}\tkzDefPoint(2.25,.35){C}
\tkzCircumCenter(A,B,C)\tkzGetPoint{O}
\tkzDrawArc (O,A)(C);

\tkzDefPoint(-2.25,.35){A}\tkzDefPoint(0,.25){B}\tkzDefPoint(2.25,.35){C}
\tkzCircumCenter(A,B,C)\tkzGetPoint{O}
\tkzDrawArc (O,A)(C);

\tkzDefPoint(2.25,-.35){A}\tkzDefPoint(0,-.25){B}\tkzDefPoint(-2.25,-.35){C}
\tkzCircumCenter(A,B,C)\tkzGetPoint{O}
\tkzDrawArc (O,A)(C);

\draw (.35,4) node{\begin{small}$z_c$\end{small}};
\draw (-4.1,.25) node{\begin{small}$z_b$\end{small}};

\draw (4.1,.25) node{\begin{small}$z_{\sigma}$\end{small}};

\draw (-1.5,3) node{\begin{small}$C c_{\epsilon}$\end{small}};

\draw (0,2) node[circle,fill,inner sep=1pt,label=right:{}]{};
\draw (0.4,2) node{\begin{small}$c_{\epsilon}$\end{small}};

\draw (2.25,.35) node[circle,fill,inner sep=1pt,label=right:{}]{};
\draw (-1.9,.2) node{\begin{small}$b_{\epsilon}$\end{small}};

\draw (-1.9,-.55) node{\begin{small}$A\sigma_{\epsilon}$\end{small}};

\draw (2.2,.55) node{\begin{small}$\sigma_{\epsilon}$\end{small}};

\draw (2.25,-.35) node[circle,fill,inner sep=1pt,label=right:{}]{};

\draw (-2.25,.35) node[circle,fill,inner sep=1pt,label=right:{}]{};

\draw (-2.25,-.35) node[circle,fill,inner sep=1pt,label=right:{}]{};

\draw (0,4) node[circle,fill,inner sep=1pt,label=right:{}]{};

\draw (4,0) node[circle,fill,inner sep=1pt,label=right:{}]{};

\draw (-4,0) node[circle,fill,inner sep=1pt,label=right:{}]{};

\draw (0,4)--(0,2);

\draw (-4,0)--(-2.25,.35);
\draw (4,0)--(2.25,.35);
\draw (-4,0)--(-2.25,-.35);
\draw (4,0)--(2.25,-.35);

\tkzDefPoint(-2.25,-.35){A}\tkzDefPoint(-2.2,0){B}\tkzDefPoint(-2.25,.35){C}
\tkzCircumCenter(A,B,C)\tkzGetPoint{O}
\tkzDrawArc (O,A)(C);

\tkzDefPoint(2.25,.35){A}\tkzDefPoint(2.2,0){B}\tkzDefPoint(2.25,-.35){C}
\tkzCircumCenter(A,B,C)\tkzGetPoint{O}
\tkzDrawArc (O,A)(C);

\draw (2.25,.35)--(-2.25,-.35);

\tkzDefPoint(-2.25,-.35){A}\tkzDefPoint(-2.2,0){B}\tkzDefPoint(-2.25,.35){C}
\tkzCircumCenter(A,B,C)\tkzGetPoint{O}
\tkzDrawArc (O,A)(C);

\draw (0,4)--(0,2);
\draw (-4,0)--(-2.25,.35);
\draw (4,0)--(2.25,.35);
\draw (-4,0)--(-2.25,-.35);

\draw (-1,3) node[circle,fill,inner sep=1pt,label=right:{}]{};
\draw (1,3) node[circle,fill,inner sep=1pt,label=right:{}]{};

\tkzDefPoint(-1,3){A}\tkzDefPoint(-.25,3.5){B}\tkzDefPoint(0,4){C}
\tkzCircumCenter(A,B,C)\tkzGetPoint{O}
\tkzDrawArc (O,A)(C);

\tkzDefPoint(0,4){A}\tkzDefPoint(.25,3.5){B}\tkzDefPoint(1,3){C}
\tkzCircumCenter(A,B,C)\tkzGetPoint{O}
\tkzDrawArc (O,A)(C);

\tkzDefPoint(-1,3){A}\tkzDefPoint(-.25,2.15){B}\tkzDefPoint(0,2){C}
\tkzCircumCenter(A,B,C)\tkzGetPoint{O}
\tkzDrawArc (O,A)(C);

\tkzDefPoint(0,2){A}\tkzDefPoint(.25,2.15){B}\tkzDefPoint(1,3){C}
\tkzCircumCenter(A,B,C)\tkzGetPoint{O}
\tkzDrawArc (O,A)(C);

\draw (-3,.75) node[circle,fill,inner sep=1pt,label=right:{}]{};
\draw (3,.75) node[circle,fill,inner sep=1pt,label=right:{}]{};

\tkzDefPoint(-2.25,.35){A}\tkzDefPoint(-2.5,.6){B}\tkzDefPoint(-3,.75){C}
\tkzCircumCenter(A,B,C)\tkzGetPoint{O}
\tkzDrawArc (O,A)(C);

\tkzDefPoint(3,.75){A}\tkzDefPoint(2.5,.6){B}\tkzDefPoint(2.25,.35){C}
\tkzCircumCenter(A,B,C)\tkzGetPoint{O}
\tkzDrawArc (O,A)(C);

\tkzDefPoint(-4,0){A}\tkzDefPoint(-3.5,.25){B}\tkzDefPoint(-3,.75){C}
\tkzCircumCenter(A,B,C)\tkzGetPoint{O}
\tkzDrawArc (O,A)(C);

\tkzDefPoint(3,.75){A}\tkzDefPoint(3.5,.25){B}\tkzDefPoint(4,0){C}
\tkzCircumCenter(A,B,C)\tkzGetPoint{O}
\tkzDrawArc (O,A)(C);

\tkzDefPoint(-3,.75){A}\tkzDefPoint(-2,1.5){B}\tkzDefPoint(-1,3){C}
\tkzCircumCenter(A,B,C)\tkzGetPoint{O}
\tkzDrawArc (O,A)(C);

\tkzDefPoint(1,3){A}\tkzDefPoint(2,1.5){B}\tkzDefPoint(3,.75){C}
\tkzCircumCenter(A,B,C)\tkzGetPoint{O}
\tkzDrawArc (O,A)(C);

\tkzDefPoint(-2.25,.35){A}\tkzDefPoint(-1.85,.75){B}\tkzDefPoint(-1,3){C}
\tkzCircumCenter(A,B,C)\tkzGetPoint{O}
\tkzDrawArc (O,A)(C);

\tkzDefPoint(0,2){A}\tkzDefPoint(2,1){B}\tkzDefPoint(3,.75){C}
\tkzCircumCenter(A,B,C)\tkzGetPoint{O}
\tkzDrawArc (O,A)(C);

    \end{tikzpicture}
\end{center}

\begin{prop}
$\mathcal{T}_{\Delta}$ as defined above is a triangulation for $X_{\Delta}$ that is invariant under the action of  the conformal group of symmetries $G$ of $X_{\Delta}$.  In particular, $\mathcal{T}_{\Delta}$ is a bounded ideal triangulation of $X_{\Delta}$.   
\end{prop}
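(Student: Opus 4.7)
The plan is to verify each of the three defining properties of a bounded ideal triangulation separately, exploiting the $G$-equivariance of the construction to reduce everything to a finite check on the fundamental domain $\Delta$.

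First I will establish $G$-invariance. The vertex set $V_\Delta$ is by definition the $G$-orbit of $\{z_b, z_c, z_\sigma, b_\epsilon, c_\epsilon, \sigma_\epsilon\}$, hence $G$-invariant. The edges fall into three families. The edges placed inside each translate $g(\Delta)$ (the three $\epsilon$-$\epsilon$ edges and the three edges from a $\Delta$-vertex to its nearest $\epsilon$-vertex) are defined by the same local recipe in every $g(\Delta)$, so their union is $G$-invariant. The cyclic-connection edges at each $\Delta$-vertex are determined intrinsically by the cyclic order of the incident $\epsilon$-vertices around that vertex, so they transform equivariantly under the isometric $G$-action. Finally, the diagonal families $\{[gb_\epsilon, gCc_\epsilon]:g\in G\}$ and $\{[g\sigma_\epsilon, gA\sigma_\epsilon]:g\in G\}$ are closed under left multiplication by $G$ by construction.

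Next I will check that $\mathcal{T}_\Delta$ is an ideal triangulation of $X_\Delta$. By $G$-invariance it suffices to do this in a neighborhood of $\Delta$. Within $\Delta$ the internal edges bound the central triangle with vertices $b_\epsilon, c_\epsilon, \sigma_\epsilon$ together with three triangles each having one $\Delta$-vertex and two consecutive neighboring $\epsilon$-vertices. Across each side of $\Delta$, the cyclic edges at the two endpoints and the two $\Delta$-vertex-to-$\epsilon$-vertex edges (one from $\Delta$, one from the adjacent translate) bound a rectangular region, and the diagonals $[gb_\epsilon, gCc_\epsilon]$ and $[g\sigma_\epsilon, gA\sigma_\epsilon]$ split each such rectangle into two ideal triangles. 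I will verify this cutting explicitly by computing $Cc_\epsilon$ (the image of $c_\epsilon$ under the rotation by $\pi/4$ about $z_c$) and $A\sigma_\epsilon$ (the image of $\sigma_\epsilon$ under the rotation by $\pi$ about $z_a$), and identifying them as the opposite corners of the appropriate rectangles. Since all vertices are punctures of $X_\Delta$, each complementary region is an ideal hyperbolic triangle.

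Boundedness then follows formally. By construction every edge has both endpoints at punctures of $X_\Delta$. Since $G$ acts by isometries and there are at most six $G$-orbits of vertices, vertices in the same orbit have equal valence, so the number of incident edges at each puncture takes only finitely many values and is therefore uniformly bounded. For the shears, $G$ acts by hyperbolic isometries and permutes the edges of $\mathcal{T}_\Delta$, so the shear is constant on each $G$-orbit of edges; there are only finitely many such orbits (one for each edge type identified above), so the shear function takes only finitely many values and in particular lies in $\ell^\infty(E_\Delta, \mathbb{R})$.

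The principal obstacle is the middle step: one has to verify explicitly that the two families of diagonals cut the rectangular strips correctly, with no omitted or overlapping pieces. This reduces to locating $Cc_\epsilon$ and $A\sigma_\epsilon$ geometrically relative to $b_\epsilon$ and $\sigma_\epsilon$ and matching them to opposite corners of the correct rectangles, using the specific rotations generating $G$. Once that geometric identification is settled, the remaining steps are bookkeeping over the fundamental domain $\Delta$.
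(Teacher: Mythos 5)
Your outline of the $G$-invariance of the vertex set, the triangles inside the translates $g(\Delta)$, and the boundedness of valences and shears (finitely many $G$-orbits of vertices and edges) matches the paper's argument. But there is a genuine gap at the one point where the proof actually has content: the well-definedness of the diagonals. You assert that the diagonal families $\{g[b_{\epsilon},Cc_{\epsilon}]\}_{g\in G}$ and $\{g[\sigma_{\epsilon},A\sigma_{\epsilon}]\}_{g\in G}$ are ``closed under left multiplication by $G$ by construction,'' which is true as sets of edges, but it does not address the real danger: if two distinct elements $g,g'\in G$ carry the base rectangle to the \emph{same} rectangle $R$, the two propagated edges $g[b_{\epsilon},Cc_{\epsilon}]$ and $g'[b_{\epsilon},Cc_{\epsilon}]$ could be the two \emph{different} diagonals of $R$. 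Then both would be present, they would cross, and $\mathcal{T}_{\Delta}$ would not be a triangulation at all. Your proposed explicit computation of $Cc_{\epsilon}$ and $A\sigma_{\epsilon}$ only confirms that each chosen edge is \emph{a} diagonal of the intended rectangle; it does not rule out a second, crossing diagonal arriving from another element of the orbit.

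The paper closes exactly this gap with a stabilizer argument. For the rectangle $R_1$ meeting $[z_b,z_c]$: if $gR_1=g'R_1$ with $(g')^{-1}g\neq 1$, then $(g')^{-1}g$ would have to swap the roles of $z_b$ and $z_c$, conjugating the stabilizer of $z_b$ (order $4$) onto the stabilizer of $z_c$ (order $8$), which is impossible; hence the representative $g$ is unique and so is the diagonal. For the rectangle $R_2$ meeting $[z_b,z_{\sigma}]$ the representative is \emph{not} unique --- $(g')^{-1}g$ can equal $A$ --- and the construction survives only because the chosen diagonal $[\sigma_{\epsilon},A\sigma_{\epsilon}]$ happens to be fixed by $A$. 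This second case shows the issue is not a formality: a different choice of diagonal in $R_2$ would genuinely break the construction. Your proof needs this (or an equivalent) verification before the boundedness bookkeeping can proceed. As a secondary point, your reduction of ``$\mathcal{T}_{\Delta}$ covers $X_{\Delta}$'' to a local check near $\Delta$ is plausible but should be justified; the paper instead observes that the lifted shears satisfy the sufficient condition (\ref{eq:qs-suff-cond}), so the developed triangulation fills all of $\mathbb{H}^2$.
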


\begin{proof}

The edges listed above other than the diagonals of rectangles are
invariant under the action of $G$
since the vertices $V_{\Delta}$ are given in terms of the orbits of the vertices of $\Delta$ (which is a fundamental domain for $G$) and the points $\epsilon$ distance away from this orbit  which is invariant under isometries of $\mathbb{H}^2$. The ideal triangles inside the orbit $\{ g\Delta :g\in G\}$ are invariant under $G$ and so are the ideal triangles  with one vertex at a point of the orbit of vertices of $\Delta$. 

Therefore, the ideal rectangles are also invariant under the action of $G$ since they are complementary to the ideal triangles.  The diagonals in the ideal rectangles are defined by choosing  diagonals in two rectangles and propagating by the  action of $G$. The first rectangle $R_1$ intersects the edge $[z_b,z_c]$ and the diagonal is $[b_{\epsilon},Cc_{\epsilon}]$.  The choice of the diagonals in the orbit of this rectangle is given by $g[b_{\epsilon},Cc_{\epsilon}]$. Since we are constructing a triangulation, we require that each rectangle in this process has only one diagonal. To see this, it is enough to prove that any rectangle (in the orbit) is the image of the first rectangle under exactly one element of $G$.  Let $gR_1=g'R_1$ for $g,g'\in G$. Then $(g')^{-1}gR_1=R_1$. If $(g')^{-1}g\neq 1$, then $(g')^{-1}g(z_b)=z_c$. Then $(g')^{-1}g$ conjugates the subgroup  $Fix(z_b)$ of $G$ that fixes $z_b$ to the subgroup $Fix(z_c)$ that fixes $z_c$. Since $Fix(z_b)$ has order $4$ and $Fix(z_c)$ has order $8$, this is impossible. Thus $(g')^{-1}g=id$ and the uniqueness of the diagonals follows. 

The second rectangle $R_2$ intersects the side $[z_b,z_{\sigma}]$ of $\Delta$. If $gR_2=g'R_2$ for $g,g'\in G$, then $(g')^{-1}gR_2=R_2$. This implies that either $(g')^{-1}g=1$ or $(g')^{-1}g=A$. 
Even though the choice of $g$ is not unique, the diagonal in each rectangle in the orbit is unique since $A$ fixes the diagonal $[\sigma_{\epsilon}, A\sigma_{\epsilon}]$ . Therefore $\mathcal{T}_{\Delta}$ is a triangulation that is invariant under the action of $G$.

By the construction, each vertex of $\mathcal{T}_{\Delta}$ has valence at most $8$ and there are finitely many orbits of rectangles in $\mathcal{T}_{\Delta}$. Therefore the shears are bounded and the triangulation is bounded. 

It remains to prove that the whole surface $X_{\Delta}$ is covered by the triangulation $\mathcal{T}_{\Delta}$. Notice that the lift of the triangulation $\mathcal{T}_{\Delta}$ to the universal covering $\mathbb{H}^2$ is a triangulation whose shears satisfy (\ref{eq:qs-suff-cond}) and therefore the triangulation covers $\mathbb{H}^2$. 
\end{proof}

\section{The manifold structure on the Teichm\"uller and quasi-Fuchsian spaces}

The shear coordinates are real-valued and naturally belong to a real vector space. Therefore, even though the Teichm\"uller space is a complex Banach manifold, the shear map can only be a real-analytic diffeomorphism. 
We recall a natural manifold structure on the Teichm\"uller and Quasi-Fuchsian space, which (we will prove in the subsequent sections) makes the shear map real-analytic on the Teichm\"uller space and extendable to a holomorphic map in a small neighborhood of the Teichm\"uller space. In this setting, the Teichm\"uller space embeds as a totally real submanifold in the (complex Banach manifold) quasi-Fuchsian space.

We conformally identify the Riemann surface $X$ with $\HH /\Gamma$, where $\Gamma$ is a Fuchsian group acting on the upper half-plane $\HH$. The Teichm\"uller space $T(X)$ consists of all Teichm\"uller equivalence classes $[f]$ of quasiconformal maps $f:\HH\to\HH$ that fix $0$, $1$ and $\infty$, and conjugate the action of $\Gamma$ onto another Fuchsian group $\Gamma_f$. In other words, $f$ descends to a quasiconformal map of $X=\HH /\Gamma$ onto $\HH /\Gamma_f$. Two quasiconformal maps $f,f_1:\HH\to\HH$ (which conjugate $\Gamma$ onto another Fuchsian group) are {\it Teichm\"uller equivalent} if they agree on the ideal boundary $\hat{\R}=\R\cup\{\infty\}$ of the upper half-plane $\HH$.

The Beltrami coefficient $\mu :=\frac{f_{\bar{z}}}{ f_z}$ of a quasiconformal map $f:\HH\to\HH$ satisfies $\|\mu\|_{\infty}<1$. Conversely, given $\mu\in L^{\infty}(\HH )$ with $\|\mu\|_{\infty}<1$ there exists a unique quasiconformal map $f:\HH \to\HH$ that fixes $0$, $1$ and $\infty$ whose Beltrami coefficient is $\mu$. A quasiconformal map $f$ conjugates $\Gamma$ onto another Fuchsian group if and only if $\mu\circ\gamma (z)\frac{\overline{\gamma'(z)}}{\gamma'(z)}=\mu (z)$ for almost all  $z\in\HH$ and for all $\gamma\in \Gamma$. 
Two Beltrami coefficients are {\it Teichm\"uller equivalent} if the corresponding normalized quasiconformal maps are equal on $\hat{\R}$.  Alternatively, $T (X)$ is the set of all Teichm\"uller classes $[\mu]$ of Beltrami coefficients that satisfy $\mu\circ\gamma (z)\frac{\overline{\gamma'(z)}}{\gamma'(z)}=\mu (z)$ for all $\gamma\in \Gamma$ and for almost all $z\in\HH$ (for example, see \cite{GardinerLakic}).

A quasiconformal map $f:\hat{\mathbb{C}}\to\hat{\mathbb{C}}$ that conjugates  $\Gamma<PSL_2(\mathbb{R})$ onto a subgroup of $PSL_2(\mathbb{C})$ fixing $0$, $1$ and $\infty$ represents an element of the quasi-Fuchsian space ${QF}(X )$, where $X=\HH /\Gamma$. The group $\Gamma_f:=f\Gamma f^{-1}$ is said to be a quasiFuchsian group. Two quasiconformal maps $f$ and $g$ fixing $0$, $1$ and $\infty$ that conjugate $\Gamma$ onto a subgroup of $PSL_2(\C )$ are {\it equivalent} if they agree on $\hat{\R}=\R\cup\{\infty\}$. In general, the quasiconformal maps $f$ representing a point in ${QF}(X )$ do not preserve $\hat{\R}$. Denote by $[f]\in {QF}(X )$ the corresponding equivalence class. Equivalently, we can define $ {QF}(X )$ to consist of all equivalence classes $[\mu ]$ of Beltrami coefficients $\mu$ on $\C$ satisfying  $\mu\circ\gamma (z)\frac{\overline{\gamma'(z)}}{\gamma'(z)}=\mu (z)$ for all $\gamma\in \Gamma$ and for almost all $z\in\C$ where two Beltrami coefficients are equivalent if their corresponding normalized quasiconformal maps agree on $\hat{\R }$ (and conjugate $\Gamma$ onto subgroups of $PSL_2(\mathbb{C})$). 

A quasiconformal map $f:\HH\to\HH$ extends by the reflection $f(z)=\overline{f(\bar{z})}$ for $z$ in the lower half-plane $\HH_{-}=\{ z:Im(z)<0\}$ to a quasiconformal map of $\hat{\C}$ that preserves $\hat{\mathbb{R}}$. The corresponding Beltrami coefficient satisfies $\mu (z)=\overline{\mu (\bar{z})}$ for $z$ in the lower half-plane $\HH_{-}$. The Teichm\"uller space $T (X)$ embeds into the quasi-Fuchsian space ${QF}(X)$ by extending each Beltrami coefficient $\mu$ on $\HH$ to $\C$ using the reflection in the real line. In this embedding, $T(X)$ is a totally real submanifold of $QF(X)$.

Bers introduced a complex Banach manifold structure to the Teichm\"uller space $T (X)$. The complex chart around the basepoint $[0 ]\in T (X)$ is obtained as follows. Let $\tilde{\mu}$ be the Beltrami coefficient which equals $\mu$ in the upper half-plane $\HH$ and equals zero in the lower half-plane $\HH_{-}$. The solution $f=f^{\tilde{\mu}}$ to the Beltrami equation $ f_{\bar{z}}=\tilde{\mu}  f_z$  is conformal in the lower half-plane $\HH_{-}$. The Schwarzian derivative 
$$
S(f^{\tilde{\mu}})(z)=\frac{(f^{\tilde{\mu}})'''(z)}{(f^{\tilde{\mu}})'(z)}-\frac{3}{2}\Big{(}\frac{(f^{\tilde{\mu}})''(z)}{(f^{\tilde{\mu}})'(z)}\Big{)}^2
$$
for $z\in\HH_{-}$ defines a holomorphic function $\phi (z)=S(f^{\tilde{\mu}})(z)$ which satisfies
$(\phi \circ\gamma )(z) \gamma'(z)^2=\phi (z)$ and $\|\varphi\|_{b}:=\sup_{z\in\HH_{-}}|y^2\phi (z)|<\infty$, called a {\it cusped form} for $X$. The space of all cusped forms $\phi :\HH_{-}\to\C$ for $X$ is a complex Banach space ${Q}_b(X)$ with the norm $\|\cdot\|_b$ (see \cite{GardinerLakic}).  

The Schwarzian derivative map is a holomorphic map from the unit ball in $L^{\infty}(\HH )$ onto an open subset of ${Q}_b(X)$ and it projects to a homeomorphism $\Phi$ from $T (X)$ to an open subset of ${Q}_b(X)$ containing the origin. 
The open ball $B_{[0]}(\frac{1}{2}\log 2)$ in $T (X)$ of radius $\frac{1}{2}\log 2$ and center $[0]$ maps under $\Phi$ onto an open set in ${Q}_b(X)$ which contains the ball of radius $\frac{2}{3}$ and is contained in the ball of radius $2$ with center $0\in{Q}_b(X)$. The map $\Phi :B_{[0]}(\frac{1}{2}\log 2)\to {Q}_b(X)$ is a chart map for the base point $[0]\in T (X)$ (see \cite[\S 6]{GardinerLakic}). The Ahlfors-Weill section provides an explicit formula for $\Phi^{-1}$ on the ball of radius $\frac{1}{2}$ and center $0$ in ${Q}_b(X)$. Namely if $\phi\in {Q}_b(X)$ with $\|\phi\|_{b}<\frac{1}{2}$ then Ahlfors and Weill prove that $\Phi^{-1}(\phi )=[-2y^2\phi (\bar{z})]$ (see \cite[\S 6]{GardinerLakic}). The Beltrami coefficient $\eta_{\phi}(z):=-2y^2\phi (\bar{z})$  is said to be {\it harmonic}. The Ahlfors-Weill formula gives an explicit expression of Beltrami coefficients that represent points in $T (X)$ corresponding to the holomorphic disks $\{\tau\phi :|\tau |<1,\|\phi\|_{b}<\frac{1}{2}\}$ in the chart in ${Q}_b(X)$, namely $$\Phi^{-1}(\{\tau\phi :|\tau |<1\})=\{ [\tau\eta_\phi ]\in T (X) :|\tau |<1\}.$$
More generally, for $\phi_0,\phi\in Q_b(X)$ with $\|\phi_0\|_b<1/2$, the holomorphic disk based at $\phi_0$ in the direction $\phi$ is given by  $$\{ \phi_0+\tau\phi :|\tau |<(1/2-\|\phi_0\|_b)/\|\phi\|_b \}.$$
We have
\begin{equation}
\label{eq:section-h-disk}
\Phi^{-1} (\{ \phi_0+\tau\phi :|\tau |<\frac{1/2-\|\phi_0\|_b}{\|\phi\|_b}\} )=\{ [\eta_{\phi_0}+\tau\eta_\phi ]\in T (X) :|\tau |<\frac{1/2-\|\phi_0\|_b}{\|\phi\|_b}\}
\end{equation}
 
Let $[\mu_0]\in T (X)$ be fixed and define $X^{[\mu_0]}=\mathbb{H}^2/ f^{\mu_0}\Gamma (f^{\mu_0})^{-1}$ to be the image Riemann surface. A chart for $[\mu_0]\in T (X)$ is given by the chart of $[0]\in T (X^{[\mu_0]})$ under the {\it translation map} $T_{[\mu_0 ]}:T (X)\to T (X^{[\mu_0]})$ defined by
$$
T_{[\mu_0 ]}([f])=[f\circ (f^{\mu_0})^{-1}]
$$
for $[f]\in T (X)$. Namely, let $B^{[\mu_0]}_{[0]}(\frac{1}{2}\log 2)\subset T (X^{[\mu_0]})$ be a ball with center $[0]$ and radius $\frac{1}{2}\log 2$ in the Teichm\"uller space of $X^{[\mu_0]}$. Let $\Phi_{[\mu_0]}$ be the Bers embedding for $T (X^{[\mu_0]})$. Then the chart for $[\mu_0]$ is given by
$$
\Phi_{[\mu_0]}\circ T_{[\mu_0]}:T_{[\mu_0]}^{-1}(B^{[\mu_0]}_{[0]}(\frac{1}{2}\log 2))\to {Q}_b(X^{[\mu_0]})
$$
where $T_{[\mu_0]}^{-1}(B^{[\mu_0]}_{[0]}(\frac{1}{2}\log 2))=B_{[\mu_0]}(\frac{1}{2}\log 2)$ because the translation map $T_{[\mu_0]}$ is an isometry for the Teichm\"uller metric. For $\phi_0,\phi\in Q_b(X^{[\mu_0]})$ with $\|\phi_0\|_b<1/2$ and for $\tau\in\mathbb{C}$ with $|\tau |<\frac{1/2-\|\phi_0\|_b}{\|\phi\|_b}$, using the Ahlfors-Weill section and the chain rule we have
\begin{equation}
\label{eq:chart-translation}
(\Phi_{[\mu_0]}\circ T_{[\mu_0]})^{-1}(\phi_0+\tau\phi )=\Big{[}\frac{ (\eta_{\phi_0}+\tau\eta_\phi )\circ f^{\mu_0}
\frac{\overline{f^{\mu_0}_{{z}}}}{f^{\mu_0}_z}+{\mu_0 }}{1+ (\eta_{\phi_0}+\tau\eta_\phi )\circ f^{\mu_0}\frac{\overline{f^{\mu_0}_{{z}}}}{f^{\mu_0}_z}\overline{\mu_0}}\Big{]}.
\end{equation}

By the Bers simultaneous uniformization theorem, the quasi-Fuchsian space ${QF}(X)$ is identified with the product of the Teichm\"uller space $T (X)$ of the Riemann surface $X$ and the Teichm\"uller space $T (\bar{X})$ of the mirror image Riemann surface $\bar{X}$. A point $[\mu ]\in {QF}(X)$ is represented by a Beltrami coefficient $\mu$ on $\C$ and we identify it with the pair of (equivalence classes of) Beltrami coefficients $([\mu_1],[\mu_2])$, where $\mu_1=\mu |_{\HH}$ and $\mu_2=\mu |_{\HH_{-}}$.  The Teichm\"uller space $T (\bar{X})$ is defined as the equivalence classes of Beltrami coefficients in $\HH_{-}$ invariant under $\Gamma$ where two Beltrami coefficients are equivalent if their corresponding normalized quasiconformal maps from $\HH_{-}$ onto itself agree on $\hat{\R}$. The charts on ${QF}(X)$ are given by the product charts on $T (X)\times T (\bar{X})$ and the inverse of the charts are given by the product of the Ahlfors-Weil sections. The Teichm\"uller space $T(X )$ embeds into ${QF}(X)$ as a totally real-analytic submanifold by the formula $[\mu ]\mapsto [\tilde{\mu}]$, where $\tilde{\mu}(z)=\mu (z)$ for $z\in\HH$ and $\tilde{\mu }(z)=\overline{\mu (\bar{z})}$ for $z\in\HH_{-}$. Note that the correspondence $[\mu ]\mapsto [\tilde{\mu}]$ is real-linear but not complex-linear.

Given a pair of Beltrami coefficients $(\mu_1 ,\mu_2 )$ with $\mu_1\in L^{\infty}(\HH )$ and $\mu_2\in L^{\infty}(\HH_{-})$ define $\tilde{\mu}_1=\mu_1$ on $\HH$, $\tilde{\mu}_1=0$ on $\HH_{-}$, and $\tilde{\mu}_2=\mu_2$ on $\HH_{-}$, $\tilde{\mu}_2=0$ on $\HH$. The Schwarzian derivatives $S(f^{\tilde{\mu}_1})$ and  $S(f^{\tilde{\mu}_2})$ give a pair of holomorphic functions $(\phi_1,\phi_2)$ in $\HH_{-}$ and $\HH$, respectively. The holomorphic functions $\phi_1$ and $\phi_2$ (in $Q_b(X)$ and $Q_b(\bar{X})$) satisfy the invariance property under $\Gamma$ and the boundedness in the $\|\cdot\|_b
$-norm in their respective domains. Let $\eta_i$ be the harmonic Beltrami coefficient corresponding to $\phi_i$ under the Ahlfors-Weill section. Then the pair of harmonic Beltrami coefficients $\eta:=(\eta_1,\eta_2)$ represents the point in ${QF}(X)$ that is mapped to $\phi :=(\phi_1,\phi_2)$ under the Bers embedding. Such $\eta$ is called a {\it complex harmonic differential}, giving the inverse of the chart map of $QF(X)$. 

Let $[{\mu_0}]\in T(X)\subset QF(X)$ with ${\mu_0}^+:={\mu_0}|\mathbb{H}^2$ and ${\mu_0}^-:={\mu_0}|\mathbb{H}^2_-$. 
If $\phi_0^+,\phi^+\in Q_b(X^{[{\mu_0}^+]})$, $\phi_0^-,\phi^-\in Q_b(\bar{X}^{[{\mu_0}^-]})$ with $\|\phi_0^+\|_b<1/2$, $\|\phi_0^-\|_b<1/2$, the holomorphic disk based at $(\phi_0^+,\phi_0^-)$ in the direction $(\phi^+,\phi^-)$ is given by  
$$\{ (\phi_0^++\tau\phi^+,\phi_0^-+\tau\phi^-) :|\tau |<\min (\frac{1/2-\|\phi_0^+\|_b}{\|\phi^+\|_b},\frac{1/2-\|\phi_0^-\|_b}{\|\phi^-\|_b})  \}.$$ 
The formulas (\ref{eq:section-h-disk}) and (\ref{eq:chart-translation}) extend to the holomorphic disk above since they are defined separately on $\mathbb{H}^2$ and $\mathbb{H}^2_-$. We note that the corresponding Beltrami coefficient 
\begin{equation}
\label{eq:hol-disk-qf}
\mu (\tau ):=((\Phi_{[\mu_0^+]}\circ T_{[\mu_0^+]})^{-1}(\phi_0^++\tau\phi^+),(\Phi_{[\mu_0^-]}\circ T_{[\mu_0^-]})^{-1}(\phi_0^-+\tau\phi^-))  
 \end{equation}
 is holomorphic in $\tau$ as a function in $L^{\infty}(\mathbb{C})$.

\section{The shear coordinates parametrizations of the Teichm\"uller space of Riemann surfaces with
bounded ideal triangulations}

Let $X$ be a Riemann surface with a Fuchsian covering group of the first kind and countably many punctures that accumulate to each topological end of $X$. An ideal triangulation $\mathcal{T}$ is said to be {\it bounded} if the edges of $\mathcal{T}$ connect punctures, there is an upper bound on the number of incident edges on every puncture, and the shears on all edges are equal to zero. Let $d>0$ be the least common multiple on the number of edges of $\mathcal{T}$ that are incident to the punctures of $X$.

Let $\mathcal{E}$ be the edges of the ideal triangulation $\mathcal{T}$ of $X$.
We define $\ell^{\infty}(\mathcal{E},\mathbb{R})$ to be the vector space of all bounded functions $f:\mathcal{E}\to \mathbb{R}$ with the supremum norm $\| f\|_{\infty}=\sup_{e\in\mathcal{E}}|f(e)|$.

We give a parametrization of the Teichm\"uller space $T(X)$ in terms of the shear coordinates on the edges $\mathcal{E}$. Note that this parametrization complements the parametrization of $T(X)$ when $X$ has a bounded pants decomposition (see Shiga \cite{Shiga}) and when $X$ has an upper bounded pants decomposition (see Alessandrini-Liu-Papadopoulos-Su \cite{ALPS}). Indeed, in the case we consider any pants decomposition has unbounded cuff lengths (see \cite{Kinjo,BPV}).

\begin{thm}
\label{thm:Teich-param}
Let $X$ be a Riemann surface with bounded ideal triangulation $\mathcal{T}$. Then, the shear map 
$$
s_{*}:T(X)\to\ell^{\infty}(\mathcal{E},\mathbb{R})
$$
is a homeomorphism onto its image, and the image $s_*(T(X))$ consists of all $s\in \ell^{\infty}(\mathcal{E},\mathbb{R})$ such that the sum of shears at each puncture equals zero.
\end{thm}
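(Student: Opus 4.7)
The plan is to reduce to the canonical case where $X=\mathbb{H}^2/\Gamma$ with $\Gamma<PSL_2(\mathbb{Z})$ and $\mathcal{T}$ lifts to the Farey tessellation $\mathcal{F}$, as is assumed in the definition at the beginning of this section. In this setting, $T(X)$ identifies with $\Gamma$-invariant Teichm\"uller classes of normalized quasisymmetric maps $h:\hat{\mathbb{R}}\to\hat{\mathbb{R}}$ fixing $0,1,\infty$, and a shear function on $\mathcal{E}$ lifts to a $\Gamma$-invariant function $\tilde s$ on $E(\mathcal{F})$. For well-definedness, boundedness of $s_*([f])$ follows from the quasisymmetric constant of $h$ via (\ref{eqn:qs-cond}), while the zero-sum at each puncture is a cusp condition: the $\Gamma$-stabilizer of a vertex $v$ of $\mathcal{F}$ lifting a puncture is a parabolic element $\gamma_v$ that cyclically shifts the fan at $v$ by one period, and the deformation must conjugate $\gamma_v$ to another parabolic element (otherwise the puncture opens into a funnel). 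A direct computation with shears (cf.\ \cite{Bonahon}) shows that $h\gamma_v h^{-1}$ is parabolic iff the exponentiated sum of shears over one period equals $1$, i.e., the sum vanishes. Injectivity is then standard: the developing map of \cite{Penner,Saric10} reconstructs $h$ on the dense set of vertices of $\mathcal{F}$, so the shears determine $[h]$ uniquely under our normalization.

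For surjectivity, given $s\in\ell^\infty(\mathcal{E},\mathbb{R})$ with zero sum at every puncture, lift to $\tilde s$ on $E(\mathcal{F})$. Every vertex of $\mathcal{F}$ is a cusp of $\Gamma$ (a consequence of the reduction), and the valence at each cusp is bounded by some $d$, so each fan is $\Gamma$-periodic of period at most $d$ with zero sum per period. Any partial sum $\sum_{j=m}^{m+k}\tilde s(e_j)$ decomposes into full periods (each contributing zero) plus a residual fragment of length less than $d$, yielding
$$
\Big|\sum_{j=m}^{m+k}\tilde s(e_j)\Big|\leq (d-1)\|s\|_\infty,
$$
which is precisely the sufficient condition (\ref{eq:qs-suff-cond}) of \cite{PS}. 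Thus $h_{s}$ is quasisymmetric; the $\Gamma$-invariance of $\tilde s$ yields that $h_s\Gamma h_s^{-1}$ is a Fuchsian group, and the zero-sum condition ensures each parabolic generator of $\Gamma$ is conjugated to a parabolic element of $h_s\Gamma h_s^{-1}$, so punctures are preserved.

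Bicontinuity follows from Theorem~D of \cite{Saric10}: $h_n\to h$ in the Teichm\"uller topology iff $M(s_n,s)\to 1$. Taking $k=0$ in the definition of $s(p;m,k)$ gives $s(p;m,0)=e^{s(e_m)}$, so $M(s_n,s)\to 1$ immediately forces $\|s_n-s\|_\infty\to 0$. Conversely, the uniform bound from the surjectivity step makes every $s(p;m,k)$ a ratio of bounded sums of bounded exponentials (each exponent bounded by $(d-1)\|s\|_\infty$), depending continuously on at most $d$ distinct shear values via periodicity, so $\|s_n-s\|_\infty\to 0$ forces $M(s_n,s)\to 1$. The principal obstacle will be the cusp identification underlying the necessity of the zero-sum condition: one must carefully match the trace of the conjugated parabolic generator with the exponentiated shear-sum over one fan period, and the bounded-valence hypothesis is exactly what keeps this holonomy calculation finite at each cusp.
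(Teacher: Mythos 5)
Your proposal follows essentially the same route as the paper: characterize the image by the zero-sum condition at punctures plus boundedness, get surjectivity from the sufficient condition (\ref{eq:qs-suff-cond}) of \cite{PS} with the bound $(d-1)\|s\|_\infty$ coming from splitting partial fan sums into full periods plus a residual, and get bicontinuity from Theorem D of \cite{Saric10} by comparing $\|\cdot\|_\infty$-convergence of shears with $M(s_n,s)\to 1$. Your extra remarks (the holonomy computation showing $h\gamma_v h^{-1}$ is parabolic iff the period sum of shears vanishes, and the explicit appeal to the developing map for injectivity) are consistent with, and slightly more explicit than, what the paper writes.

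The one step you compress to the point of a gap is the continuity of $(s_*)^{-1}$, which is where the paper spends most of its proof. Your claim that $s(p;m,k)$ ``depends continuously on at most $d$ distinct shear values via periodicity'' is not by itself enough: the sums in the numerator and denominator of $s(p;m,k)$ have $k+1$ terms with $k$ unbounded, so you have an infinite family of functions of those $d$ values and you need the continuity estimate to be \emph{uniform} in $k$ (equicontinuity), not just continuity of each member. What saves the argument --- and what the paper proves via its three-factor decomposition with $m=qd+r$ --- is that periodicity forces both numerator and denominator to have the form $q\cdot(\text{fixed period sum})+(\text{residual of fewer than } d \text{ terms})$, with the period sums bounded below by a constant depending only on $d$ and $\|s\|_\infty$; the perturbation of each is $O(q\|s_n-s\|_\infty)$ while the denominator grows like $q$, so the ratio is within $O(\|s_n-s\|_\infty)$ of $1$ uniformly in $q$, $r$, and the fan. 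You should make this uniformity explicit; as stated, a reader could object that pointwise continuity in $d$ variables does not control a supremum over infinitely many $k$.
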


\begin{proof}
We first characterize the image $s_*(T(X))$ of $T(X)$. For each $[f]\in T(X)$, the punctures on $X$ correspond to the punctures on the image surface $f(X)$. Therefore, the sum of the shears $s_*([f])$ on the edges ending at a puncture is zero. Moreover, the shear function $s_*([f]):\mathcal{E}\to\mathbb{R}$ is bounded since each quadruple with zero shears is mapped by a quasiconformal map to a quadruple whose shear is bounded with a bound depending only on the quasiconformal constant of $f$ (see \cite[Theorem A]{Saric10}, \cite[Theorem 1]{Saric23} and \cite[Theorem 1.1]{PS}). 

Conversely, 
assume that $s\in\ell^{\infty}(\mathcal{E},\mathbb{R})$ and that the sum of the shears on the edges at each puncture is zero (when projected to $X$). We need to prove that $s$ is in $s_*(T(X))$. Note that any $s:\mathcal{E}\to\mathbb{R}$ defines a developing map $h_s$ from the set of vertices $\hat{\mathbb{Q}}$ into $\hat{\mathbb{R}}$ that realizes $s$ (see Penner \cite{Penner} and also \cite{Saric10}). By \cite[Theorem 1.1]{PS}, the developing map $h_s$ is quasisymmetric if there exists $C>0$ such that at every fan of edges $\{ e_n\}_{n\in\mathbb{Z}}$, for each $m\in\mathbb{Z}$ and each $k\in\mathbb{N}$, we have
\begin{equation}
\label{eq:qs-PS}
|\sum_{j=m}^{m+k} s(e_j)|<C.
\end{equation}
We will use the above sufficient condition for quasisymmetry. Indeed, we have that
$$
\sum_{j=m}^{m+q-1} s(e_j)=0
$$
for all fans and all starting edges $e_m$, in the fans. Therefore, the condition in (\ref{eq:qs-PS}) is satisfied for $C=(d-1)\| s\|_{\infty}$, where $d$ is the least common multiple of the number of incidence geodesics at each puncture of $X$. The map $h_s$ is quasisymmetric,. 

Next, we prove that the map $s_*:T(X)\to\ell^{\infty}(\mathcal{E},\mathbb{R})$ is continuous. Since both $T(X)$ and $\ell^{\infty}(\mathcal{E},\mathbb{R})$ are metric spaces, it is enough to prove the sequential continuity. Assume that a sequence of quasisymmetric maps $h_n:\hat{\mathbb{R}}\to \hat{\mathbb{R}}$ that fix $0$, $1$ and $\infty$ converge to a quasisymmetric map $h$ in the Teichm\"uller topology. If $s_n$ and $s$ are the shear functions corresponding to $h_n$ and $h$, then by \cite{Saric10}, we have that $\sup_{p\in\hat{\mathbb{Q}}}M_{s_n,s}(p)\to 1$ as $n\to\infty$. In particular, this implies that, as $n\to\infty$,
$$
\sup_{e\in\mathcal{E}}e^{|s_n(e)-s(e)|}\to 1
$$
which is equivalent to
$$
\| s_n-s\|_{\infty}\to 0.
$$

Now we prove that $(s_*)^{-1}:s_*(T(X))\to T(X)$ is continuous. Let $\| s_n-s\|_{\infty}\to 0$ as $n\to\infty$. If $\sup_{p\in\hat{\mathbb{Q}}} M_{s_n,s}(p)\to 1$ as $n\to\infty$ then $h_{s_n}$ converges to $h_s$ in the Teichm\"uller topology by \cite{Saric10}. Therefore, we need to prove that $\sup_{p\in\hat{\mathbb{Q}}} M_{s_n,s}(p)\to 1$ as $n\to\infty$. 

Given a vertex $p\in\hat{\mathbb{Q}}$, denote by $\{ e_j\}_{j\in\mathbb{Z}}$ the fan of geodesics in $\mathcal{F}$ with one endpoint $p$. Let $k\in\mathbb{Z}$ and $m\in \mathbb{N}$. We have
$$s(p;m,k)=e^{s_k}\frac{1+e^{s_{k+1}}+e^{s_{k+1}+s_{k+2}}+...+e^{s_{k+1}+s_{k+2}+...+s_{k+m}}}{1+e^{-s_{k-1}}+e^{-s_{k-1}-s_{k-2}}+...+e^{-s_{k+1}-s_{k-2}-...-s_{k-m}}}$$
and 
$$
M_{s_n,s}(p)=\sup_{m,k}\frac{s_n(p;m,k)}{s(p;m,k)}.
$$

Let $m=qd+r$ for some $q,r\in \mathbb{N}\cup\{ 0\}$ with $0\leq r<d$. Since the sum of shears about any puncture is zero, any sum of shears on $d$ adjacent geodesics in a fan is zero. Thus $$s(p;m,k)=e^{s_k}\frac{q(1+e^{s_{k+1}}+...+e^{s_{k+1}+s_{k+2}+...+s_{k+d-1}})+e^{s_{k+1}}+...+e^{s_{k+1}+s_{k+2}+...+s_{k+r}}}{q(1+e^{-s_{k-1}}+...+e^{-s_{k-1}-s_{k-2}-...-s_{k-d+1}})+e^{-s_{k-1}}+...e^{-s_{k-1}-s_{k-2}-...-s_{k-r}}}$$

Let $C>0$ be such that
$$
\| s_n\|_{\infty},\| s\|_{\infty} \leq C
$$
for all $n$. Consider $\frac{s(p;m,k)}{s_n(p;m,k)}$ and note that it can be written as a product of three factors $$\frac{1+e^{s_{k+1}}+...+e^{s_{k+1}+s_{k+2}+...+s_{k+m}}}{1+e^{(s_n)_{k+1}}+...+e^{(s_n)_{k+1}+(s_n)_{k+2}+...(s_n)_{k+m}}},$$
$$
e^{s_k-(s_n)_k}
$$
and
$$
\frac{1+e^{-(s_n)_{k-1}}+...+e^{-(s_n)_{k-1}-(s_n)_{k-2}-...-(s_n)_{k-m}}}{1+e^{-s_{k+1}}+...+e^{-s_{k+1}-s_{k+2}-...-s_{k+m}}},
$$
where $s_k:=s(e_k)$ and $(s_n)_k:=s_n(e_k)$.
The above middle factor satisfies $
e^{-\| s-s_n\|_{\infty}}\leq e^{s_k-(s_n)_k}\leq e^{\| s-s_n\|_{\infty}}
$ which implies that it converges to $1$ uniformly in all $k$ and all fans. 

To estimate the first factor, consider
\begin{equation}
\label{eq:numerator}
|\frac{1+e^{s_{k+1}}+...+e^{s_{k+1}+s_{k+2}+...+s_{k+m}}}{1+e^{(s_n)_{k+1}}+...+e^{(s_n)_{k+1}+(s_n)_{k+2}+...(s_n)_{k+m}}}-1|.
\end{equation}
Once we write the above expression as a single fraction and use the fact that the sum of $d$ consecutive shears is zero with $m=qd+r$, the numerator is bounded above by the sum of the terms
$$
(q+1)|e^{\sum_{i=1}^ls_{k+i}}-e^{\sum_{i=1}^l(s_n)_{k+i}}|
$$ 
for $l=1,\ldots ,m$. 

Note that
\begin{equation}
\label{eq:insert}
\begin{split}
|e^{\sum_{i=1}^ls_{k+i}}-e^{\sum_{i=1}^l(s_n)_{k+i}}|\leq |e^{\sum_{i=1}^ls_{k+i}}-e^{(\sum_{i=1}^{l-1}(s_n)_{k+i})+s_l}|+\ \ \ \ \ \\  | e^{(\sum_{i=1}^{l-1}(s_n)_{k+i})+s_l}-e^{\sum_{i=1}^l(s_n)_{k+i}}|
\end{split}
\end{equation}
By inserting appropriate terms as in (\ref{eq:insert}), we obtain
\begin{equation}
\begin{split}
|e^{\sum_{i=1}^ls_{k+i}}-e^{(\sum_{i=1}^{l-1}(s_n)_{k+i})+s_l}|\leq e^{C} |e^{\sum_{i=1}^{l-1}s_{k+i}}-e^{(\sum_{i=1}^{l-1}(s_n)_{k+i})}|\leq\ldots \\
\leq 2^de^{dC}(e^{\| s_n-s\|_{\infty}}-1).
\end{split}
\end{equation}
Therefore, there is a constant $K=K(d)$ such that the numerator in (\ref{eq:numerator}) is bounded from the above by $(q+1)K(d)\| s-s_n\|_{\infty}$. The denominator in (\ref{eq:numerator}) is bounded from below by $q K_1+K_2r$, where both $K_1$ and $K_2$ are positive when $\| s-s_n\|_{\infty}$ is smaller than some fixed positive number. Therefore we have
$$
|\frac{1+e^{s_{k+1}}+...+e^{s_{k+1}+s_{k+2}+...+s_{k+m}}}{1+e^{(s_n)_{k+1}}+...+e^{(s_n)_{k+1}+(s_n)_{k+2}+...(s_n)_{k+m}}}-1|\leq K\| s-s_n\|_{\infty}
$$
for a fixed $K>0$ and $\| s-s_n\|_{\infty}$ bounded from the above.
 Analogously, the third factor also satisfies
 $$
|\frac{1+e^{-(s_n)_{k-1}}+...+e^{-(s_n)_{k-1}-(s_n)_{k-2}-...-(s_n)_{k-m}}}{1+e^{-s_{k+1}}+...+e^{-s_{k+1}-s_{k+2}-...-s_{k+m}}}-1|\leq K\| s-s_n\|_{\infty} .
$$

Putting the three estimates together, we conclude that
$$
\sup_p M_{s,s_n}(p)\to 1
$$
as $n\to \infty$. Therefore $h_{s_n}\to h_s$ as $n\to\infty$ in the Teichm\"uller topology. Therefore $(s_*)^{-1}$ is continuous, which finishes the proof of the theorem.
\end{proof}

\begin{rem}
The above theorem established that the $\ell^{\infty}$-norm describes the correct topology on the shears, making them homeomorphic to the Teichm\"uller space. This fact depends on the existence of bounded ideal triangulation. For example, the $\ell^{\infty}$-topology on the shear function on the Farey triangulation corresponding to the universal Teichm\"uller space is not homeomorphic to the Teichm\"uller topology. It would be interesting to understand better the topology on shears for non-bounded ideal triangulations and, in particular, the case of the universal Teichm\"uller space (see also an open question in \cite{PS}).
\end{rem}

\section{The smoothness of shear coordinates}

Let $X=\mathbb{H}^2/\Gamma$ be a planar Riemann surface with an ideal triangulation $\mathcal{T}$ such that the number of edges at each puncture is bounded above and all the shears are zero. The triangulation $\mathcal{T}$ lifts to the Farey triangulation $\mathcal{F}$ of $\mathbb{H}^2$ and the quotient by $\Gamma$ recovers $\mathcal{T}$.

In this section, we prove that the shear coordinates parametrization of the Teichm\"uller space $T(X)$ 
is a real-analytic diffeomorphism. To do so, we extend the shears parametrization from a neighborhood of $T(X)$ in the quasi-Fuchsian space $QF(X)$ to complex shears and show that the extended map is a biholomorphic local homeomorphism.
 
\subsection{The complexification of shears}

Let $(a,b,c,d)$ be a quadruple of distinct points on $\hat{\mathbb{R}}$ given in the positive direction for its orientation as the boundary of the upper half-plane $\mathbb{H}^2$. Let $\Delta_1$ be the ideal triangle in $\mathbb{H}^2$ with endpoints $(a,b,d)$ and let $\Delta_2$ be the ideal triangle with endpoints $(b,c,d)$. Recall that the shear of the common side $(b,d)$ is given by
$$
s=\log cr(a,b,c,d)=\log \frac{(d-a)(c-b)}{(d-c)(b-a)}.
$$
Note that $cr(a,b,c,d)\notin \hat{\mathbb{C}}\setminus\{ -1,0,\infty\} =\mathbb{C}\setminus\{ -1,0\}$ as long as $(a,b,c,d)$ are distinct points in $\hat{\mathbb{C}}$. In fact, $cr(a,b,c,d)=-1$ if either $c=a$ or $b=d$; $cr(a,b,c,d)=0$ if either $d=a$ or $c=b$; $cr(a,b,c,d)=\infty$ if either $c=d$ or $b=a$.

Let $\mu_0$ be a Beltrami coefficient in $\mathbb{C}$ that represents a point $[f^{\mu_0}]\in T(X)$. Thus $\mu_0$ satisfies
$$
\mu_0(\gamma (z))\frac{\overline{\gamma'(z)}}{\gamma'(z)}=\mu_0(z)
$$
for all $\gamma\in\Gamma$, and 
$$
\mu_0(z)=\overline{\mu_0(\bar{z})}
$$
for a.a. $z\in\mathbb{C}$.

Let $\mathcal{E}=E(\mathcal{F})$ be the set of edges of the Farey triangulation $\mathcal{F}$. The shear coordinates $s_{[\mu_0]}:\mathcal{E}\to\mathbb{R}$ of the point $[f^{\mu_0}]\in T(X)$ are given by
$$
s_{[\mu_0]}(e)=\log cr(f^{\mu_0}(a),f^{\mu_0}(b),f^{\mu_0}(c),f^{\mu_0}(d)),
$$
where $(a,b,c,d)\in\hat{\mathbb{R}}$ is the quadruple of points representing the two adjacent triangles of $\mathcal{F}$ with common edge $e$.

Since $f^{\mu_0}$ is a quasiconformal map, it quasi-preserves the moduli of quadrilaterals (see \cite{GardinerLakic}). Since the quadrilateral $\mathbb{H}^2(a,b,c,d)$ whose interior is $\mathbb{H}^2$ and vertices are $(a,b,c,d)$ has modulus $1$, it follows that the image quadrilateral $\mathbb{H}^2(f^{\mu_0}(a),f^{\mu_0}(b),f^{\mu_0}(c),f^{\mu_0}(d))$ has modulus between $1/K$ and $K$, where $K=\frac{1+\|\mu_0\|_{\infty}}{1-\|\mu_0\|_{\infty}}$. Since the moduli of the quadrilaterals and the cross-ratios of the quadruples are continuous functions of the vertices, it follows that they are continuous functions of each other (see \cite{LV}).  Therefore, there exists $x_0>1$ such that 
$$1/x_0\leq cr(f^{\mu_0}(a),f^{\mu_0}(b),f^{\mu_0}(c),f^{\mu_0}(d))\leq x_0$$ 
for all quadruples $(a,b,c,d)$ corresponding to the edges $e\in\mathcal{E}$. 

We denote by $\rho_{-1,0}$ the hyperbolic metric on the domain $\mathbb{C}\setminus\{ -1,0 \}=\hat{\mathbb{C}}\setminus\{ -1,0,\infty\}$. Let $[1/x_0,x_0]$ be the closed interval in $\mathbb{R}$ with endpoints $1/x_0$ and $x_0$ which we consider as a subset of $\mathbb{C}\setminus\{ -1,0 \}$. Let $r_0>0$ be a fixed constant such that the set of points $D_{r_0}([1/x_0,x_0])$ that are on the hyperbolic distance at most $r_0$ from $[1/x_0,x_0]$ lies in the right half-plane in $\mathbb{C}$, i.e., $D_{r_0}([1/x_0,x_0])\subset\{ z\in\mathbb{C}:Re(z)>0\}$. This property will allow us to define the complex shears for quasiFuchsian deformations of $X$ since taking the logarithm of the cross-ratios in $\{ z\in\mathbb{C}:Re(z)>0\}$ gives the imaginary parts in the interval $[-\pi /2 ,\pi /2]$. 

\begin{lem}
\label{lem:ext-U}
Let $[f^{\mu_0}]\in T(X)$ be fixed and let $r_0>0$ be such that $D_{r_0}([1/x_0,x_0])\subset\{ z\in\mathbb{C}:Re(z)>0\}$  as above. Then there exists $\epsilon_0>0$ such that
$$
cr(f^{\mu_0+\nu}(a),f^{\mu_0+\nu}(b),f^{\mu_0+\nu}(c),f^{\mu_0+\nu}(d))\in D_{r_0}([1/x_0,x_0])
$$
for all $\nu\in L^{\infty}(\mathbb{C})$ with $\|\nu\|_{\infty}<\epsilon_0$ and for all quadruples $(a,b,c,d)$ that correspond to edges $e\in\mathcal{E}$.
\end{lem}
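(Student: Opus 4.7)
The plan is to reduce the statement to a uniform fact about $K$-quasiconformal self-maps of $\hat{\C}$ fixing three prescribed points, as $K\to 1$. Although $\mathcal{E}$ is infinite, after a suitable M\"obius normalization each edge produces a quasiconformal map with the \emph{same} three fixed points and the \emph{same} dilatation bound, so the uniformity required in the lemma follows from a single Mori-type equicontinuity estimate.

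First, I would set $g_{\nu}:=f^{\mu_{0}+\nu}\circ (f^{\mu_{0}})^{-1}$. By the standard composition formula for Beltrami coefficients of $f^{\mu}$-maps,
$$
\|\mu_{g_{\nu}}\|_{\infty}\leq \frac{\|\nu\|_{\infty}}{1-\|\mu_{0}\|_{\infty}(\|\mu_{0}\|_{\infty}+\|\nu\|_{\infty})},
$$
so the maximal dilatation $K_{\nu}$ of $g_{\nu}$ tends to $1$ as $\|\nu\|_{\infty}\to 0$, with a rate depending only on the fixed quantity $\|\mu_{0}\|_{\infty}$.

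Next, fix an edge $e\in\mathcal{E}$ with Farey vertices $(a,b,c,d)$, and write $P_{i}:=f^{\mu_{0}}(p_{i})$ and $Q_{i}:=f^{\mu_{0}+\nu}(p_{i})=g_{\nu}(P_{i})$. Let $M_{e}$ be the M\"obius transformation sending $(P_{a},P_{b},P_{d})$ to $(-1,0,\infty)$; then $x_{e}:=M_{e}(P_{c})=cr(P_{a},P_{b},P_{c},P_{d})\in [1/x_{0},x_{0}]$ by the choice of $x_{0}$. Put $\tilde g_{e}:=M_{e}\circ g_{\nu}\circ M_{e}^{-1}$ and let $N_{e}$ be the M\"obius map returning $(\tilde g_{e}(-1),\tilde g_{e}(0),\tilde g_{e}(\infty))$ to $(-1,0,\infty)$. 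Then $F_{e}:=N_{e}\circ\tilde g_{e}$ is a $K_{\nu}$-quasiconformal self-map of $\hat{\C}$ fixing $-1,0,\infty$ (neither M\"obius conjugation nor post-composition alters the dilatation), and M\"obius invariance of the cross-ratio gives
$$
cr(Q_{a},Q_{b},Q_{c},Q_{d})=cr(-1,0,F_{e}(x_{e}),\infty)=F_{e}(x_{e}).
$$

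Finally, I would invoke the classical Mori-type fact that the family of $K$-quasiconformal self-maps of $\hat{\C}$ fixing three prescribed points is equicontinuous with modulus of continuity depending only on $K$, and converges uniformly to the identity on compact subsets of the thrice-punctured sphere as $K\to 1$. Since $[1/x_{0},x_{0}]$ is compactly contained in $\C\setminus\{-1,0\}$, the open set $D_{r_{0}}([1/x_{0},x_{0}])$ contains a Euclidean $\delta$-neighborhood of $[1/x_{0},x_{0}]$. Choose $\epsilon_{0}>0$ so small that whenever $\|\nu\|_{\infty}<\epsilon_{0}$ every $K_{\nu}$-qc self-map of $\hat{\C}$ fixing $-1,0,\infty$ moves each point of $[1/x_{0},x_{0}]$ by less than $\delta$; crucially, this choice is independent of $e$. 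Applying this to $F_{e}$ and $x_{e}\in[1/x_{0},x_{0}]$ for every edge $e$ yields
$$
cr(Q_{a},Q_{b},Q_{c},Q_{d})=F_{e}(x_{e})\in D_{r_{0}}([1/x_{0},x_{0}])
$$
as required. The one point that might look like an obstacle is the infinitude of $\mathcal{E}$, but the M\"obius normalization in the previous step absorbs it into a single equicontinuous family of qc maps with common dilatation bound and common three fixed points, so the Mori-type estimate delivers the uniform conclusion automatically.
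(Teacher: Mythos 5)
Your proof is correct, but it takes a genuinely different route from the paper's. The paper fixes $\mu$ with $\|\mu\|_\infty=1$ and a quadruple $(a,b,c,d)$, observes that $\tau\mapsto cr(f^{\mu_0+\tau\mu}(a,b,c,d))$ is a holomorphic map from the disk $\mathbb{D}_{1-\|\mu_0\|_\infty}$ into the thrice-punctured sphere $\mathbb{C}\setminus\{-1,0\}$ sending $0$ into $[1/x_0,x_0]$, and applies the Schwarz--Pick lemma: the map contracts hyperbolic metrics, so every $\tau$ within hyperbolic distance $r_0$ of the origin is sent into $D_{r_0}([1/x_0,x_0])$, and this hyperbolic disk about $0$ is a Euclidean disk of some radius $\epsilon_0$ independent of $\mu$ and of the edge. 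You instead factor $f^{\mu_0+\nu}=g_\nu\circ f^{\mu_0}$, conjugate and renormalize by M\"obius maps edge by edge so that every edge contributes a member of the single family of $K_\nu$-quasiconformal self-maps of $\hat{\mathbb{C}}$ fixing $-1,0,\infty$, and invoke the Mori/Lehto--Virtanen distortion theorem that this family converges uniformly to the identity as $K_\nu\to 1$. Both arguments deliver the required uniformity over the infinite edge set --- the paper's because the Schwarz--Pick bound never sees the edge, yours because the normalization collapses all edges into one equicontinuous family. What each buys: the paper's version lands directly in the hyperbolic neighborhood $D_{r_0}$ with no need to compare Euclidean and hyperbolic neighborhoods, and the holomorphic-disk viewpoint is exactly what is reused afterwards to prove the shear map is holomorphic; your version is more elementary and self-contained, resting only on standard quasiconformal distortion, at the cost of the (harmless, since $[1/x_0,x_0]$ is compact in $\mathbb{C}\setminus\{-1,0\}$) extra step of fitting a Euclidean $\delta$-neighborhood inside $D_{r_0}([1/x_0,x_0])$. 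Your Beltrami composition estimate and the cross-ratio identity $cr(Q_a,Q_b,Q_c,Q_d)=F_e(x_e)$ are both correct as stated.
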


\begin{proof}
Let $\mu\in L^{\infty}(\mathbb{C})$ with $\|\mu\|_{\infty}=1$. The map $\tau\mapsto cr(f^{\mu_0+\tau\mu}(a,b,c,d))$ is well-defined and holomorphic for all complex numbers $\tau$ with $|\tau |<1-\|\mu_0\|_{\infty}$ because, for such that $\tau$, $\|\mu_0+\tau\mu\|_{\infty}<1$. In addition, since $f^{\mu_0+\tau\mu}$ is quasiconformal, the image quadruple $f^{\mu_0+\tau\mu}(a,b,c,d)$ has pairwise distinct components which implies that $cr(f^{\mu_0+\tau\mu}(a,b,c,d))\in \mathbb{C}\setminus\{ -1,0\}$.  

Therefore, we have a holomorphic map from $\mathbb{D}_{1-\|\mu_0\|_{\infty}}=\{ \tau\in\mathbb{C}:|\tau |< 1-\|\mu_0\|_{\infty}\}$ into $\mathbb{C}\setminus\{ -1,0\}$. By Schwarz's Lemma (see \cite{Ahlfors}), the holomorphic map is distance non-increasing for the hyperbolic metrics on the domain and range. Since $\tau =0\mapsto cr(f^{\mu_0}(a,b,c,d))\subset [1/x_0,x_0]$, there exists $\delta >0$ such that $cr(f^{\mu_0+\tau\mu}(a,b,c,d))\subset D_{r_0}([1/x_0,x_0])$ for all $\tau$ on the hyperbolic distance less than $\delta$ from $0$ for the domain $\mathbb{D}_{1-\|\mu_0\|_{\infty}}$. The choice of $\delta$ is independent of $\|\mu\|_{\infty}=1$ and of $(a,b,c,d)$ for $cr(a,b,c,d)=1$. The hyperbolic disk in the domain $\mathbb{D}_{1-\|\mu_0\|_{\infty}}$ centered in $0$ of radius $\delta$ is a Euclidean disk of some radius $\epsilon_0>0$ which finishes the proof.
\end{proof}

Given $[f^{\mu_0}]\in T(X)$, let $\epsilon_0>0$ be the constant from Lemma \ref{lem:ext-U}. Define
$U_{[f^{\mu_0}]}(\epsilon_0)\subset QF(X)$ to consist of all classes of quasiconformal maps $f^{\mu_0+\nu}$ with $\nu\in L^{\infty}(\mathbb{C})$, $\|\nu\|_{\infty}<\epsilon_0$, and
$$
\nu (\gamma (z))\frac{\overline{\gamma'(z)}}{\gamma'(z)}=\nu (z)
$$
for all $\gamma\in\Gamma$ and a.a. $z\in\mathbb{C}$. The set $U_{[f^{\mu_0}]}(\epsilon_0)$ contains an open neighborhood of $[f^{\mu_0}]$ in $QF(X)$. 

To $[f^{\mu_0+\nu}]\in U_{[f^{\mu_0}]}(\epsilon_0)\subset QF(X)$, we assign complex shear coordinates on the edges $\mathcal{E}$ of the Farey triangulation $\mathcal{F}$. Given $e\in\mathcal{E}$, let $(a,b,c,d)$ be the quadruple consisting of vertices of the two ideal hyperbolic triangles of $\mathcal{F}$ that share edge $e$ with $(b,c)$ the endpoints of $e$. Define
$$
s_{[f^{\mu_0+\nu}]}(e)=\log cr(f^{\mu_0+\nu}(a),f^{\mu_0+\nu}(b),f^{\mu_0+\nu}(c),f^{\mu_0+\nu}(d)),
$$
where the branch of the logarithm is chosen such that its imaginary parts are in the interval $[-\pi /2,\pi /2]$. The shear coordinate function $s_{*}: U_{[f^{\mu_0}]}(\epsilon_0)\to
(\mathbb{R}\times [-\pi /2,\pi /2])^{\mathcal{E}}$ is injective and, for each $e\in\mathcal{E}$,  $\tau\mapsto s_{[f^{\mu_0+\tau\nu}]}(e)$ is holomorphic in $\tau$.

\subsection{The map from $U_{[f^{\mu_0}]}(\epsilon_0)$ to the complex shear coordinates is holomorphic} 

Our goal in this subsection is to prove that $s_{*}: U_{[f^{\mu_0}]}(\epsilon_0)\to
(\mathbb{R}\times [-\pi /2,\pi /2])^{\mathcal{E}}$ is holomorphic, which will imply that its restriction to the Teichm\"uller space is a real-analytic map. Define $\ell^{\infty}(\mathcal{E},\mathbb{C})$ to be the complex Banach space of all bounded functions $g:\mathcal{E}\to\mathbb{C}$ with the norm $\| g\|_{\infty}=\sup_{e\in\mathcal{E}}|g(e)|$. 
 Since $cr(f^{\mu_0+\tau\mu}(a,b,c,d)\subset D_{r_0}([1/x_0,x_0])$, the image $s_{*}(U_{[f^{\mu_0}]}(\epsilon_0))$ is in $\log (D_{r_0}([1/x_0,x_0] )$ which means that $s_{[f^{\mu_0+\nu}]}\in \ell^{\infty}(\mathcal{E},\mathbb{C})$ for each $[f^{\mu_0+\nu}]\in B_{[f^{\mu_0}]}(\epsilon_0)$ and that $s_{*}(B_{[f^{\mu_0}]}(\epsilon_0))$ is a bounded subset of $\ell^{\infty}(\mathcal{E},\mathbb{C})$.

By \cite[Theorem 14.9, page 198]{Chae}, the map $s_{*}:U_{[f^{\mu_0}]}(\epsilon_0)\to \ell^{\infty}(\mathcal{E},\mathbb{C})$ is holomorphic if it is bounded and weakly holomorphic. We established above that 
$s_{*}$ is bounded.
Let $[\widetilde{\mu_0}]$ be an arbitrary point in $U_{[f^{\mu_0}]}$( $\epsilon_0$) which corresponds to $\phi_0=(\phi_0^+,\phi_0^-)$, $\phi_0^+ \in Q_b(X^{[\mu_0^+]})$ and $\phi_0^- \in Q_b(X^{[\mu_0^-]})$. Namely
$$
\widetilde{\mu_0}=((\Phi_{[\mu_0^+]}\circ T_{[\mu_0^+]})^{-1}(\phi_0^+),(\Phi_{[\mu_0^-]}\circ T_{[\mu_0^-]})^{-1}(\phi_0^-)).
$$ 
Let $\{ (\phi_0^++\tau\phi^+,\phi_0^-+\tau\phi^-) :|\tau |<\min (\frac{1/2-\|\phi_0^+\|_b}{\|\phi^+\|_b},\frac{1/2-\|\phi_0^-\|_b}{\|\phi^-\|_b})  \}$ be a disk centered at $\phi_0$ in the direction $\phi =(\phi^+,\phi^-)$.  The map
$$
\tau\mapsto \mu (\tau ):=((\Phi_{[\mu_0^+]}\circ T_{[\mu_0^+]})^{-1}(\phi_0^++\tau\phi^+),(\Phi_{[\mu_0^-]}\circ T_{[\mu_0^-]})^{-1}(\phi_0^-+\tau\phi^-))
$$
is holomorphic function of $\tau$ into $L^{\infty}(\mathbb{C})$.
To show that $s_{*}$ is weakly holomorphic, we need to 
prove that $\tau\mapsto s_{[f^{\mu (\tau )}]}$ is differentiable as a function of a single complex variable $\tau$ into the infinite-dimensional complex Banach space $\ell^{\infty}(\mathcal{E},\mathbb{C})$. 

\begin{prop}
\label{prop:diff-tau}
Let $[f^{\mu_0}]\in T(X)$ and let $U_{[f^{\mu_0}]}(\epsilon_0)$ be its neighborhood in $QF(X)$ as above. Let $\{ (\phi_0^++\tau\phi^+,\phi_0^-+\tau\phi^-) :|\tau |<\min (\frac{1/2-\|\phi_0^+\|_b}{\|\phi^+\|_b},\frac{1/2-\|\phi_0^-\|_b}{\|\phi^-\|_b})  \}$ be a complex disk in the image of  $U_{[f^{\mu_0}]}(\epsilon_0)$ under the chart map. Then the complex shear map from $\{\tau\in\mathbb{C}: |\tau |<\min (\frac{1/2-\|\phi_0^+\|_b}{\|\phi^+\|_b},\frac{1/2-\|\phi_0^-\|_b}{\|\phi^-\|_b})\}$ into $\ell^{\infty}(\mathcal{E},\mathbb{C})$ given by
$$
\tau\mapsto s_{[f^{\mu (\tau )}]}
$$
is holomorphic, where $\mu (\tau)=((\Phi_{[\mu_0^+]}\circ T_{[\mu_0^+]})^{-1}(\phi_0^++\tau\phi^+),(\Phi_{[\mu_0^-]}\circ T_{[\mu_0^-]})^{-1}(\phi_0^-+\tau\phi^-))$ .
\end{prop}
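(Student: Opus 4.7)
The plan is to verify holomorphy of $\tau\mapsto s_{[f^{\mu(\tau)}]}$ in two stages: first establish that each coordinate $\tau\mapsto s_{[f^{\mu(\tau)}]}(e)$ is holomorphic as a scalar function, and then upgrade to norm holomorphy into $\ell^{\infty}(\mathcal{E},\mathbb{C})$ via uniform Cauchy estimates. The essential fact that makes the upgrade work is the uniform-in-$e$ boundedness of the cross-ratios already delivered by Lemma \ref{lem:ext-U}.

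For the coordinate-wise step, fix an edge $e\in\mathcal{E}$ with ideal quadruple $(a,b,c,d)\in\hat{\mathbb{R}}^4$. Because $\tau\mapsto\mu(\tau)$ is a holomorphic map into $L^{\infty}(\mathbb{C})$ with $\|\mu(\tau)\|_{\infty}<1$ throughout the disk, the standard holomorphic-dependence part of the Ahlfors--Bers measurable Riemann mapping theorem gives that $\tau\mapsto f^{\mu(\tau)}(z)$ is holomorphic for every fixed $z\in\hat{\mathbb{C}}$. Hence
$$
\tau\mapsto cr\bigl(f^{\mu(\tau)}(a),f^{\mu(\tau)}(b),f^{\mu(\tau)}(c),f^{\mu(\tau)}(d)\bigr)
$$
is a holomorphic map into $\mathbb{C}\setminus\{-1,0\}$, and composition with the chosen branch of $\log$ shows that $g_e(\tau):=s_{[f^{\mu(\tau)}]}(e)$ is holomorphic in $\tau$. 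For uniform boundedness, I would repeat the Schwarz lemma argument of Lemma \ref{lem:ext-U} but applied to the holomorphic family $\mu(\tau)$ in place of the single direction $\mu_0+\tau\mu$: the cross-ratios stay inside the fixed hyperbolic neighbourhood $D_{r_0}([1/x_0,x_0])$ for every edge $e\in\mathcal{E}$, locally uniformly in $\tau$. Thus there exists a constant $M$ such that $|g_e(\tau)|\le M$ for all $e\in\mathcal{E}$ and all $\tau$ in any chosen closed subdisk of the domain.

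To upgrade, fix such a closed subdisk and a radius $r>0$ so that the circle $|\zeta-\tau|=r$ is contained in it. Applying the Cauchy integral formula coordinate-wise,
$$
g_e'(\tau)=\frac{1}{2\pi i}\oint_{|\zeta-\tau|=r}\frac{g_e(\zeta)}{(\zeta-\tau)^2}\,d\zeta,\qquad |g_e'(\tau)|\le M/r,
$$
so defining $s'(\tau)(e):=g_e'(\tau)$ produces a well-defined element $s'(\tau)\in\ell^{\infty}(\mathcal{E},\mathbb{C})$ of norm at most $M/r$. The exact Cauchy remainder
$$
\frac{g_e(\tau+h)-g_e(\tau)}{h}-g_e'(\tau)=\frac{h}{2\pi i}\oint_{|\zeta-\tau|=r}\frac{g_e(\zeta)}{(\zeta-\tau-h)(\zeta-\tau)^2}\,d\zeta
$$
is bounded, uniformly in $e$, by $C M|h|$ for all sufficiently small $h$. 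Consequently
$$
\left\|\frac{s_{[f^{\mu(\tau+h)}]}-s_{[f^{\mu(\tau)}]}}{h}-s'(\tau)\right\|_{\ell^{\infty}(\mathcal{E},\mathbb{C})}\le C M|h|\longrightarrow 0\quad\text{as }h\to 0,
$$
which is the definition of norm holomorphy of $\tau\mapsto s_{[f^{\mu(\tau)}]}$ into $\ell^{\infty}(\mathcal{E},\mathbb{C})$.

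The one substantive point is the uniform-in-$e$ bound on the cross-ratios: without it the Cauchy remainder estimate, while still valid for each individual coordinate, would fail to produce a limit in $\ell^{\infty}$-norm. This uniformity is exactly what the Schwarz lemma argument underlying Lemma \ref{lem:ext-U} furnishes, once one notes that the argument only uses $\|\mu(\tau)\|_{\infty}<1$ on the disk and the fact that the base point cross-ratios lie in a bounded interval $[1/x_0,x_0]$; both hypotheses transfer from the single direction $\mu_0+\tau\mu$ to any holomorphic family $\mu(\tau)$ through the disk. Everything else is routine vector-valued complex analysis.
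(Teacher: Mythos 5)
Your proof is correct, but it takes a genuinely different route from the paper's. The paper obtains the uniformity in $e$ directly at the level of difference quotients of the quasiconformal maps: it writes $f^{\mu(\tau)}=f^{\sigma(\tau)}\circ f^{\widetilde{\mu_0}}$, invokes the Ahlfors--Bers statement that the limit defining $\frac{d}{d\tau}f^{\sigma(\tau)}(z)$ is uniform on $|z|<R$ over all holomorphic families with $\|\frac{d}{d\tau}\sigma(\tau)\|_{\infty}\leq C$, and then --- since the quadruple attached to an edge need not lie in $|z|<R$ --- inserts a M\"obius renormalization $A$ sending three of the four image points to $0,1,\infty$, so that the uniform cross-ratio bound of Lemma \ref{lem:ext-U} forces the fourth point into a fixed compact set where the uniform Ahlfors--Bers estimate applies. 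You instead use Ahlfors--Bers only in its soft, pointwise form (each coordinate $g_e$ is a scalar holomorphic function of $\tau$), take the uniform bound on the cross-ratios from Lemma \ref{lem:ext-U} as the sole quantitative input, and upgrade to norm holomorphy into $\ell^{\infty}(\mathcal{E},\mathbb{C})$ via the Cauchy integral remainder estimate; this upgrade is legitimate precisely because the coordinate evaluations are a norming family for $\ell^{\infty}$, so a uniform-in-$e$ bound on the scalar remainders is the same as a bound on the $\ell^{\infty}$-norm of the vector remainder. Your route is shorter and entirely avoids the renormalization step; the paper's route extracts the derivative more explicitly from the Ahlfors--Bers variational theory. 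Both arguments ultimately rest on the same two inputs (holomorphic dependence of $f^{\mu(\tau)}(z)$ on $\tau$ and the uniform containment of the cross-ratios in $D_{r_0}([1/x_0,x_0])$), and differ only in how the uniformity over the edge set is harvested.
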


\begin{proof} Let $\widetilde{\mu_0}=((\Phi_{[\mu_0^+]}\circ T_{[\mu_0^+]})^{-1}(\phi_0^+),(\Phi_{[\mu_0^-]}\circ T_{[\mu_0^-]})^{-1}(\phi_0^-))$. 
By the chain rule (for example, see \cite[page 11]{GardinerLakic}), the Beltrami coefficient $\sigma (\tau )$ of $f^{\mu (\tau)}\circ (f^{\widetilde{\mu_0}})^{-1}$ is 
$$\sigma=\Big{(}\frac{\mu (\tau )-\widetilde{\mu_0}}{1-\overline{\widetilde{\mu_0}}\mu (\tau )}\frac{1}{\theta}\Big{)}\circ (f^{\widetilde{\mu_0}})^{-1},$$
where $\|\theta\|_{\infty}=1$.
Note that $\sigma =\sigma (\tau )$ is a holomorphic function of $\tau$ and $\sigma (0)=0$.

By the solution of the measurable Riemann's mapping theorem (see \cite{AhlforsBers}), the quasiconformal mapping $f^{\sigma}= f^{\mu (\tau)}\circ (f^{\widetilde{\mu_0}})^{-1}$ depends holomorphically on $\tau$ and it equals the identity when $\tau =0$. Here, we consider $f^{\sigma}$ to belong to the function space that has finite Dirichlet norms for $|z|<R$ and has  $\partial /\partial z$ and $\partial /\partial \bar{z}$ derivatives with finite $L^{\infty}$-norms. In particular, we conclude that $\sigma\mapsto f^{\sigma}$ is a holomorphic map in the parameter $\tau$ into the space of $L^{\infty}$-functions on $|z|<R$. Moreover, the map is bounded. Since a map is holomorphic if and only if it is   bounded and weakly holomorphic (see \cite[Theorem 14.9, page 198]{Chae}), the limit
\begin{equation}
\label{eq:uniform-hol}
\lim_{\omega\to 0}\frac{|f^{\sigma (\tau+\omega )}(z)-f^{\sigma (\tau )}(z)-\frac{d}{d\tau}f^{\sigma (\tau )}(z)|}{|\omega |}=0
\end{equation} 
is uniform in all holomorphic families of Beltrami coefficients $\sigma (\tau )$ if $\| \frac{d}{d\tau}\sigma (\tau )\|_{\infty}\leq C$ for some fixed $C$ (see \cite{AhlforsBers}).

The quadruple $(a,b,c,d)$ corresponding to an edge $e\in\mathcal{E}$ can be outside $|z|<R$, and the above estimate cannot be applied directly.  Let $A\in PSL_2(\mathbb{C})$ be such that it maps $(f^{\widetilde{\mu_0}}(a),f^{\widetilde{\mu_0}}(b),f^{\widetilde{\mu_0}}(d))$ onto $(0,1,\infty )$. 
Since the quadruple $(f^{\widetilde{\mu_0}}(a),f^{\widetilde{\mu_0}}(b),f^{\widetilde{\mu_0}}(c),f^{\widetilde{\mu_0}}(d))$ has cross-ratio in the set $D_{r_0}([1/x_0,x_0])$, it follows that $|A(f^{\widetilde{\mu_0}}(c))|<R(x_0)$ for some $R=R(x_0)$ which only depends of $x_0$. We write
$$
f^{\mu (\tau )}=(f^{\sigma (\tau )}\circ A^{-1})\circ (A\circ f^{\widetilde{\mu_0}})
$$
and note that the norm of the Beltrami coefficient of $f^{\sigma (\tau )}\circ A^{-1}$ is equal to $\|\sigma\|_{\infty}$. 

Since
$$
cr(f^{\widetilde{\mu_0}}(a),f^{\widetilde{\mu_0}}(b),f^{\widetilde{\mu_0}}(c),f^{\widetilde{\mu_0}}(d))=cr(A\circ f^{\widetilde{\mu_0}}(a),A\circ f^{\widetilde{\mu_0}}(b),A\circ f^{\widetilde{\mu_0}}(c),A\circ f^{\widetilde{\mu_0}}(d))
$$
and
$$
cr(A\circ f^{\widetilde{\mu_0}}(a),A\circ f^{\widetilde{\mu_0}}(b),A\circ f^{\widetilde{\mu_0}}(c),A\circ f^{\widetilde{\mu_0}}(d))=A\circ f^{\widetilde{\mu_0}}(c)-1
$$
it follows that the limit in (\ref{eq:uniform-hol}) is uniform over all $cr(f^{\widetilde{\mu_0}}(a),f^{\widetilde{\mu_0}}(b),f^{\widetilde{\mu_0}}(c),f^{\widetilde{\mu_0}}(d))$ corresponding to all quadruples $(a,b,c,d)$ representing the edges $\mathcal{E}$. Since the shears are the logarithms of the cross-ratios and the cross-ratios are bounded away from $0$ and $\infty$, the limits of the difference quotients uniformly converge to the derivatives of the shears. Therefore, the complexified map is holomorphic.
\end{proof}

\subsection{The geometry of $\mathbb{H}^3$}
A model of hyperbolic 3-space we will be considering is the upper half space $\mathbb{H}^3=\{(x,y,z)\in \mathbb{R}^3: z>0\}$. 
The geodesic subspaces of co-dimension $1$ are either Euclidean  half-planes orthogonal to the ideal boundary  $\partial \mathbb{H}^3=\hat{\mathbb{C}}$  along a line or a Euclidean hemispheres whose boundary is a circle in $\hat{\mathbb{C}}$. In either case, these are isometric embeddings of $\mathbb{H}^2$ in $\mathbb{H}^3$ for their hyperbolic metrics. The geodesic subspaces of co-dimension $1$, called hyperbolic planes, divide $\mathbb{H}^3$ into two components which we call half-spaces. A geodesic in $\mathbb{H}^3$ is either a Euclidean ray orthogonal to $\mathbb{C}$ or a Euclidean semi-circle orthogonal to $\mathbb{C}$. 

The orientation preserving isometries of $\mathbb{H}^3$ restrict to the M{\"o}bius transformations on the boundary $\hat{\mathbb{C}}$, i.e.,  $PSL_2(\mathbb{C})$.  Since the actions of $PSL_2(\mathbb{C})$ can be described as a sequence of reflections over circles, the corresponding isometry in $\mathbb{H}^3$ is realized through sequences of reflections of half-planes and hemispheres whose boundary circles are the ones where the reflection takes place. 
An isometry of $\mathbb{H}^3$ is called {\it elliptic} if it pointwise fixes a unique geodesic and rotates all other points around the fixed geodesic by a fixed angle. 
An isometry is {\it parabolic} if it fixes a single point on $\hat{\mathbb{C}}$. 
An isometry is {\it hyperbolic} if it has two fixed points on $\hat{\mathbb{C}}$, translates points on the geodesic connecting the two points by a fixed amount, and set-wise fixes any plane containing this geodesic. This geodesic is called the {\it translation axis}. A hyperbolic isometry is obtained by the composition of two hyperbolic reflections in hyperbolic planes that are disjoint and their ideal boundaries are also disjoint. The translation axis is the unique orthogonal geodesic to the both hyperbolic planes. 
An isometry is {\it  loxodromic} if it is a composition of a hyperbolic isometry followed by an elliptic isometry whose set of fixed points lie on the translation axis of the hyperbolic isometry. This geodesic is also called the translation axis of the loxodromic element.

For $A\in PSL_2(\mathbb{C})$ a loxodromic element, the translation length is given by $\lambda_A=arccosh(\frac{1}{2}tr(A))$. When the trace is real,  $A$ is hyperbolic and $\lambda_A$ is real.  

Let $T^1\mathbb{H}^3$ be the unit tangent bundle of $\mathbb{H}^3$. Elements of $T^1\mathbb{H}^3$ are denoted by $(x,v)$ for $x\in \mathbb{H}^3$ and $v\in T_x\mathbb{H}^3$ the tangent space to $\mathbb{H}^3$ at point $x$ where $||v||_{T_x(\mathbb{H}^3)}=1$. The metric on $T^1\mathbb{H}^3$ can be defined by the following: if $(x,v),(y,u)\in T^1\mathbb{H}^3$, and $\gamma$ is a 
geodesic between $x$ and $y$ in $\mathbb{H}^3$, let $\tau:T_x\mathbb{H}^3\rightarrow T_y\mathbb{H}^3$ be the parallel transport  from $x$ to $y$  along $\gamma$. For $\Theta[(x,v),(y,u)]=||\tau (x,v)-(y,u)||$,  $$d[(x,v),(y,u)]^2=\Theta[(x,v),(y,u)]^2+\text{length}(\gamma)^2$$

We will need the following  lemmas in the next section.
\begin{lem}
\label{lem:vect-norm}
Let $K\subset \mathbb{H}^3$ be a fixed compact set, and let $(w,v)\in T^1\mathbb{H}^3$ with $w\in K$. Then there exists $C=C(K)>0$ such that 
$$
d_{T^1\mathbb{H}^3}((w,v),A(w,v))\leq C\| A-I\| ,
$$
where $A\in PSL_2(\mathbb{C})$ has translation axis intersecting $K$ and purely imaginary translation length.
\end{lem}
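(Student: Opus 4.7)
The plan is to parametrize the admissible elements $A$ by the rotation angle $\theta$ (with $\lambda_A = i\theta$) and then bound both sides of the inequality separately in terms of $|\theta|$, combining the two estimates at the end. For the upper bound on the $T^1\mathbb{H}^3$-distance I use the explicit hyperbolic rotation formula, and for the lower bound on $\|A - I\|$ I use a spectral estimate after conjugating $A$ to a standard diagonal form.

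For the upper bound $d_{T^1\mathbb{H}^3}((w,v), A(w,v)) \leq C_1(K)\,|\theta|$, observe that $A$ fixes its translation axis $\ell$ pointwise and rotates by $\theta$ around $\ell$. Let $r = d(w,\ell)$; since $w \in K$ and $\ell \cap K \neq \emptyset$, we have $r \leq \mathrm{diam}(K)$. The totally geodesic plane through $w$ perpendicular to $\ell$ is mapped by $A$ to itself, and on it $A$ acts as a hyperbolic rotation, yielding the two-dimensional identity
$$
\sinh\!\left(\frac{d(w, A(w))}{2}\right) = \sinh(r)\,\left|\sin\!\left(\frac{\theta}{2}\right)\right|,
$$
hence $d(w, A(w)) \leq \sinh(\mathrm{diam}(K))\,|\theta|$ after normalizing $\theta \in [-\pi, \pi]$ using $A \sim -A$ in $PSL_2(\mathbb{C})$. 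The vector component $\Theta[(w,v), A(w,v)]$ depends smoothly on $(w, v, \ell, \theta)$ and vanishes when $\theta = 0$, so by compactness of the parameter set $\{(w, v, \ell) : w \in K,\ v \in T^1_w\mathbb{H}^3,\ \ell \cap K \neq \emptyset\}$ it admits a uniform first-order estimate $\Theta \leq C(K)\,|\theta|$. The two contributions together give the desired upper bound.

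For the lower bound $\|A - I\| \geq c_0(K)\,|\theta|$, write $A = M_\ell D_\theta M_\ell^{-1}$, where $D_\theta = \mathrm{diag}(e^{i\theta/2}, e^{-i\theta/2})$ and $M_\ell \in PSL_2(\mathbb{C})$ carries the standard axis through $0$ and $\infty$ onto $\ell$. Then
$$
\|A - I\| \;=\; \|M_\ell(D_\theta - I)M_\ell^{-1}\| \;\geq\; \|M_\ell\|^{-1}\,\|M_\ell^{-1}\|^{-1}\,\|D_\theta - I\|,
$$
and $\|D_\theta - I\| = |e^{i\theta/2} - 1| = 2|\sin(\theta/4)| \geq |\theta|/\pi$ for $\theta \in [0, 2\pi)$. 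The key point is that $M_\ell$ can be chosen as a continuous function of $\ell$ on the set of admissible axes; this set, parametrized by $(\ell \cap K,\ \text{direction of } \ell)$, is a closed subset of the compact set $T^1K$, hence compact, so $\|M_\ell\|$ and $\|M_\ell^{-1}\|$ are uniformly bounded in terms of $K$. Combining with the upper bound yields the lemma with $C = \pi\,C_1(K)\cdot\sup_{\ell}\|M_\ell\|\,\|M_\ell^{-1}\|$.

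The main obstacle is not the pointwise scaling at a single $A$, which is essentially the derivative of the orbit map $PSL_2(\mathbb{C}) \to T^1\mathbb{H}^3$, $A \mapsto A(w,v)$, but rather the uniformity of the constants as the axis $\ell$ ranges over all geodesics meeting $K$. This is handled in both estimates by reducing to the same compact parameter space of admissible axes and extracting bounds from continuity of the relevant constructions.
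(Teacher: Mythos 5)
Your argument is correct, but it takes a genuinely different route from the paper. The paper's proof is a one-step first-order estimate on the evaluation map $A \mapsto A(w,v)$: the action of $A$ and its derivative are written explicitly in terms of the matrix entries, the displacement of a point of $K$ and of a unit tangent vector there is bounded directly by a constant depending on $K$ times $\| A-I\|$, and the details are delegated to \cite[Lemma II.3.3.2]{EpsteinMarden} and \cite[Lemma 5.1]{Saric14}; notably, that argument never uses that $A$ is elliptic or that its axis meets $K$. You instead interpolate through the rotation angle $\theta$, proving the two-sided comparison $d_{T^1\mathbb{H}^3}((w,v),A(w,v))\leq C_1(K)\,|\theta|$ and $\| A-I\|\geq c_0(K)\,|\theta|$ and composing the two; the second inequality is precisely where the hypothesis on the axis enters, since it is what keeps $\| M_{\ell}\|\,\| M_{\ell}^{-1}\|$ uniformly bounded. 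Your route buys an explicit geometric picture (the displacement identity $\sinh (d/2)=\sinh (r)\,|\sin (\theta /2)|$) and a quantitative lower bound on $\| A-I\|$ that the paper does not record, at the cost of the extra spectral step and two points that deserve a word of justification, neither of which is a gap: (i) you do not actually need a globally continuous section $\ell\mapsto M_{\ell}$, which may not exist over the whole space of admissible axes --- uniform boundedness of $\| M_{\ell}\|\,\| M_{\ell}^{-1}\|$ suffices, and it follows because $M_{\ell}$ can be chosen to move the basepoint $j=(0,0,1)$ a distance at most $d(j,K)+\mathrm{diam}(K)$, while $\cosh d(j,M(j))$ controls the entries of a unit-determinant representative of $M$ and hence of $M^{-1}$; and (ii) the uniform first-order bound on $\Theta$ at $\theta =0$ uses that parallel transport $\tau_{x\to y}$ depends smoothly on $(x,y)$ up to the diagonal, which is true but worth stating since the connecting geodesic degenerates there.
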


\begin{proof}
The action of $A$ on the upper half-plane is given by a formula that depends on the entries of the matrix representation of $A$ (see \cite{Beardon}). The derivatives are bounded in terms of these entries, and the image of a unit tangent vector is a linear combination of the derivatives. These are bounded by a constant multiple of $\| A-I\|$. For more details, see \cite[Lemma II.3.3.2]{EpsteinMarden} or \cite[Lemma 5.1]{Saric14}.
\end{proof}

The following lemma is established in \cite{KeenSeries} (see also \cite{Saric,Saric13,Saric14}).

\begin{lem}
\label{lem:inclusion}
Let $(p_n,v_n),(p_{n+1},v_{n+1})\in T^1\mathbb{H}^3$ with $p_n,p_{n+1}$ two consecutive points of the division on the geodesic ray such that the unit tangent vectors $v_n,v_{n+1}$ are  pointing in the direction of the ray. Then there exists $\delta >0$ such that if 
$d_{T^1\mathbb{H}^3}((p_{n+1},v_{n+1}),(q,w))<\delta$ then
$$
C(p_n)\supset\overline{C(q)},
$$
where $C(q)$ is the half-space with boundary geodesic plane passing through $q$ and orthogonal to $w$, and $w$ pointing inside $C(q)$.
\end{lem}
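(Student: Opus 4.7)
The plan is to exploit the fact that $p_n$ and $p_{n+1}$ are distinct points on the same geodesic ray with tangent vectors along the ray, so the bounding hyperbolic planes $\partial C(p_n)$ and $\partial C(p_{n+1})$ are both perpendicular to that geodesic at distinct points, and then to invoke continuity of the map $(q,w)\mapsto C(q)$ from the unit tangent bundle to half-spaces in $\mathbb{H}^3$.

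First I would record the elementary hyperbolic geometry fact that any two totally geodesic planes in $\mathbb{H}^3$ perpendicular to a common geodesic $\gamma$ at distinct points are disjoint (otherwise an ideal triangle with two right angles would arise). Applied to $\gamma$ joining $p_n$ and $p_{n+1}$, this gives that $\partial C(p_n)$ and $\partial C(p_{n+1})$ are disjoint, and their hyperbolic distance is exactly $d_{\mathbb{H}^3}(p_n,p_{n+1})>0$, attained along the segment from $p_n$ to $p_{n+1}$. Since $v_{n+1}$ points forward along $\gamma$, the orientation of $C(p_{n+1})$ is compatible with $C(p_n)$, and we conclude the strict, positively-separated inclusion $\overline{C(p_{n+1})}\subset C(p_n)$.

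Next I would verify that the assignment $(q,w)\mapsto \partial C(q)$ is continuous in a uniform sense on compact subsets of $T^1\mathbb{H}^3$. Concretely, parametrize the hyperbolic plane through $q$ orthogonal to $w$ as $\exp_q(w^\perp)$, where $\exp_q$ is the Riemannian exponential map and $w^\perp\subset T_q\mathbb{H}^3$ is the orthogonal complement. Both $\exp$ and the map $(q,w)\mapsto w^\perp$ are smooth, so if $(q_k,w_k)\to (p_{n+1},v_{n+1})$ in $T^1\mathbb{H}^3$, then $\partial C(q_k)\cap K$ converges in Hausdorff distance to $\partial C(p_{n+1})\cap K$ for every compact $K\subset\mathbb{H}^3$, and the oriented side also converges.

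Combining the two steps: since $\partial C(p_{n+1})$ is at a positive hyperbolic distance $d_{\mathbb{H}^3}(p_n,p_{n+1})$ from $\partial C(p_n)$ and lies on the correct side, continuity yields $\delta>0$ such that whenever $d_{T^1\mathbb{H}^3}((p_{n+1},v_{n+1}),(q,w))<\delta$, the plane $\partial C(q)$ is disjoint from $\partial C(p_n)$ and lies (together with the half-space it bounds in the direction of $w$) strictly inside $C(p_n)$, i.e.\ $\overline{C(q)}\subset C(p_n)$. The main (minor) technical point will be ensuring that the continuity is uniform enough to control the entire plane $\partial C(q)$ in relation to $\partial C(p_n)$; this is handled because both planes converge to their limits ideally in $\hat{\mathbb{C}}$, and the separation at infinity is inherited from the positive hyperbolic separation via the geometry of disjoint hyperbolic half-spaces.
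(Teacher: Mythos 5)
The paper offers no proof of this lemma---it is imported from \cite{KeenSeries} (see also \cite{Saric,Saric13,Saric14})---so there is no in-paper argument to compare against line by line. Your proof is correct and is essentially the standard argument from those references: the planes $\partial C(p_n)$ and $\partial C(p_{n+1})$ are orthogonal to the same geodesic at points a positive distance apart, hence disjoint (your parenthetical should say a triangle with two right angles, not an \emph{ideal} triangle, but the fact is standard), so $\overline{C(p_{n+1})}$ is compactly contained in $C(p_n)$, including at the ideal boundary; continuity of $(q,w)\mapsto C(q)$ then preserves the strict inclusion under a small perturbation. Two minor remarks. First, the ``technical point'' you flag at the end is most cleanly dispatched by observing that the inclusion $\overline{C(q)}\subset C(p_n)$ already follows from the inclusion of the closed ideal disk of $C(q)$ in the open ideal disk of $C(p_n)$, since a closed half-space is the union of the geodesics with both endpoints in its closed ideal disk; alternatively, write $C(q)=g\,C(p_{n+1})$ for $g\in PSL_2(\mathbb{C})$ moving $(p_{n+1},v_{n+1})$ to $(q,w)$, note $g\to\mathrm{id}$ uniformly on the compact model $\mathbb{H}^3\cup\hat{\mathbb{C}}$ as $(q,w)\to(p_{n+1},v_{n+1})$, and use that $\overline{C(p_{n+1})}$ is a compact subset of the open set $C(p_n)$. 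Either route makes the last paragraph rigorous without any extra input. Second, for the application in the proof of Theorem \ref{thm:complex_shears-qf} one needs $\delta$ independent of $n$; your construction gives a $\delta$ depending only on $d_{\mathbb{H}^3}(p_n,p_{n+1})$, which equals $1$ for every $n$, so uniformity follows by homogeneity of $\mathbb{H}^3$ and is worth stating explicitly.
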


The following lemma is first stated in \cite{Bonahon} (see \cite[Lemma 5.5 ]{Saric14} for a proof).

\begin{lem}
\label{lem:diff-est-opposite}
Let $a$ be a compact geodesic arc in $\mathbb{H}^3$. Then there exists a constant $C>0$ such that for any $\epsilon >0$ and any two geodesics $g$ and $h$ of $\mathbb{H}^3$ that intersect $a$ and have a common ideal endpoint, we have
\begin{equation}
\label{lem:cancellations}
\| T_g^{\epsilon}\circ T_h^{-\epsilon}-I\| \leq C\epsilon |a|_{g,h} ,
\end{equation}
where $|a|_{g,h}$ is the hyperbolic length of the part of $a$ between $g$ and $h$.
\end{lem}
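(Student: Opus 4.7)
The plan is to compute $T_g^{\epsilon}\circ T_h^{-\epsilon}$ explicitly in a convenient normalization where the cancellation becomes transparent. Fix a reference point $y_0$ on $a$ and conjugate by $\gamma_0\in PSL_2(\mathbb{C})$ sending the common ideal endpoint $\xi$ of $g$ and $h$ to $\infty$ and $y_0$ to $(0,1)$ in the upper half-space model. In these coordinates $g$ and $h$ become vertical Euclidean lines based at points $q_g,q_h\in\mathbb{C}$, and the loxodromic/elliptic elements act on $\hat{\mathbb{C}}$ as $z\mapsto e^{\epsilon}(z-q_g)+q_g$ and $z\mapsto e^{-\epsilon}(z-q_h)+q_h$. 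Representing each as an element of $SL_2(\mathbb{C})$ and multiplying out, the products of the off-diagonal entries in the conjugated form collapse to give
\[
\gamma_0\,(T_g^{\epsilon}\circ T_h^{-\epsilon})\,\gamma_0^{-1}=\begin{pmatrix} 1 & (q_h-q_g)(e^{\epsilon}-1) \\ 0 & 1\end{pmatrix}.
\]
Thus the composition is a parabolic translation of $\mathbb{C}$, and its deviation from $I$ in the normalized frame is $|q_h-q_g|\,|e^{\epsilon}-1|\leq 2|q_h-q_g|\,\epsilon$ for small $\epsilon$.

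Next I would relate $|q_h-q_g|$ to the hyperbolic length $|a|_{g,h}$. Because $\gamma_0$ is an isometry, the image $\gamma_0(a)$ is a hyperbolic arc of length $|a|$ passing through $(0,1)$, so it lies in a hyperbolic ball of radius $|a|$ about $(0,1)$. This ball is a bounded region of $\mathbb{H}^3$ whose size depends only on $|a|$ (not on $\xi$), and on it the Euclidean and hyperbolic metrics are comparable. Since $\gamma_0(x_g)$ and $\gamma_0(x_h)$ lie in this region with horizontal projections $q_g$ and $q_h$, we get
\[
|q_h-q_g|\leq C(a)\,d_{\mathbb{H}^3}(\gamma_0(x_g),\gamma_0(x_h))=C(a)\,|a|_{g,h}.
\]
Combined with the previous estimate this yields a bound $2C(a)\,\epsilon\,|a|_{g,h}$ in the normalized frame.

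To transfer the bound back to the original frame, I would avoid the fact that the $SL_2(\mathbb{C})$-norm is not conjugation invariant under arbitrary $\gamma_0$ by passing through displacements on $T^1\mathbb{H}^3$. The displacement $d_{T^1\mathbb{H}^3}((y_0,v_0),P(y_0,v_0))$ of a unit tangent vector anchored on the compact arc $a$ is an isometric invariant, and in the normalized frame it is bounded above by a constant multiple of the off-diagonal entry computed above, hence by $C'(a)\,\epsilon\,|a|_{g,h}$. Conversely, by the same linearization argument underpinning Lemma \ref{lem:vect-norm}, for $P$ close to the identity one has $\|P-I\|\asymp d_{T^1\mathbb{H}^3}((y_0,v_0),P(y_0,v_0))$, with constants depending only on a fixed compact neighborhood of $a$; here injectivity of the $SL_2(\mathbb{C})$-action on $T^1\mathbb{H}^3$ at $(y_0,v_0)$ is what prevents the estimate from collapsing. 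Combining these two comparisons gives the desired bound $\|T_g^{\epsilon}\circ T_h^{-\epsilon}-I\|\leq C\,\epsilon\,|a|_{g,h}$.

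The main obstacle is arranging the cancellation in the first step: a naive triangle-inequality bound $\|T_g^{\epsilon}T_h^{-\epsilon}-I\|\leq \|T_g^{\epsilon}-I\|+\|T_h^{-\epsilon}-I\|+\|(T_g^{\epsilon}-I)(T_h^{-\epsilon}-I)\|$ is only $O(\epsilon)$ with no $|a|_{g,h}$ factor, so there is no slack left for the improvement we need. The gain comes entirely from the fact that $g$ and $h$ share an ideal endpoint and carry opposite rotations, so in the normalization sending the shared endpoint to $\infty$ the leading-order contributions annihilate and only the residue $(q_h-q_g)(e^{\epsilon}-1)$ survives. The second delicate point, ensuring $|q_h-q_g|\leq C(a)\,|a|_{g,h}$ \emph{uniformly} over $\xi$, rests on the key observation that $\gamma_0(a)$ always lies in a compact region near $(0,1)$ independent of $\xi$, since $\gamma_0$ is an isometry and $y_0$ is sent to the fixed point $(0,1)$.
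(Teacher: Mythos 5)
The paper does not actually prove this lemma; it is quoted from Bonahon with a pointer to \cite[Lemma 5.5]{Saric14} for the proof. Your core computation is exactly the standard argument behind that reference: normalizing the common ideal endpoint to $\infty$ makes $T_g^{\epsilon}$ and $T_h^{-\epsilon}$ affine maps $z\mapsto e^{\epsilon}(z-q_g)+q_g$ and $z\mapsto e^{-\epsilon}(z-q_h)+q_h$, whose composition is the parabolic $z\mapsto z+(q_h-q_g)(e^{\epsilon}-1)$, and the comparison $|q_h-q_g|\leq C(a)\,|a|_{g,h}$ follows because $\gamma_0(a)$ sits in a fixed compact region around $(0,0,1)$ where the Euclidean and hyperbolic metrics are bi-Lipschitz. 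Both of these steps are correct (modulo the implicit consistent orientation of $g$ and $h$ toward the shared endpoint, without which the multipliers would not cancel, and the caveat that $|e^{\epsilon}-1|\leq 2\epsilon$ only for bounded $\epsilon$ --- which is all the paper ever uses).

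The genuine gap is in your transfer back to the original frame. You assert that for $P$ near the identity one has $\|P-I\|\asymp d_{T^1\mathbb{H}^3}\bigl((y_0,v_0),P(y_0,v_0)\bigr)$, justified by ``injectivity of the $SL_2(\mathbb{C})$-action on $T^1\mathbb{H}^3$ at $(y_0,v_0)$.'' That injectivity fails: the stabilizer of a single unit tangent vector is the circle of elliptic rotations about the geodesic through $y_0$ in the direction $v_0$, so the displacement of one tangent vector can vanish while $\|P-I\|$ is of order $1$. Lemma \ref{lem:vect-norm} only gives the one-sided inequality (displacement controlled by $\|A-I\|$), not the converse you invoke. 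The repair is easy and does not need $T^1\mathbb{H}^3$ at all: the condition $\gamma_0(y_0)=(0,0,1)$ forces $\gamma_0$ into a translate of the maximal compact subgroup $PSU(2)$, a compact subset of $PSL_2(\mathbb{C})$ depending only on the fixed arc $a$ (not on $\xi$), so $\|\gamma_0^{-1}N\gamma_0-I\|\leq\|\gamma_0^{-1}\|\,\|N-I\|\,\|\gamma_0\|\leq C(a)\|N-I\|$ directly. With that substitution your argument is complete.
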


\subsection{The map from complex shear coordinates to $QF(X)$} 

Let $s_0=s_{[f^{\mu_0}]}:\mathcal{E}\to\mathbb{R}$ be the shear coordinate parameters of a point in $T(X)$. The zero-shear coordinate (${\bf 0}(e):=0$ for all $e\in\mathcal{E}$) corresponds to the basepoint $[id]\in T(X)$. Given $\epsilon_0>0$, define the $\epsilon_0$-neighborhood of the zero shear in $s_{*}(T(X))$ by 
$$U_{\epsilon_0} ({\bf 0})=\{  s_{f^{[\mu ]}}:[f^{\mu}]\in T(X)\ \mathrm{and}\ \|  s_{f^{[\mu ]}}\|_{\infty}<\epsilon_0\}. $$ 
We define a complex neighborhood of the zero-shear coordinate by
$$
V_{\epsilon_0}({\bf 0})=\{ \tau\cdot s:s\in U_{\epsilon_0}({\bf 0})\ \mathrm{and}\ \tau\in\mathbb{C}\ \mathrm{with}\ |\tau |<1\} .
$$

In other words, $V_{\epsilon_0}({\bf 0})$ consists of all functions $s\in \ell^{\infty}(\mathcal{E},\mathbb{C})$ that are invariant under the action of the covering group $\Gamma$ of $X$, the sum of the values of $s$ on the geodesics at each puncture is zero and $\| s\|_{\infty}<\epsilon_0$.

The complex neighborhood of $s_0:=s_{[f^{\mu_0}]}$ is defined by
$$
V_{\epsilon_0}({s_0})=s_0+V_{\epsilon_0}({\bf 0}).
$$
The shear coordinates $s_{[f^{\mu}])}\in \ell^{\infty}(\mathcal{E},\mathbb{R})$ are invariant under the action of the covering group $\Gamma$ of $X$ on the set of edges $\mathcal{E}$. Each fan of edges in $\mathcal{E}$ (i.e., the set 
of edges with a common endpoint) corresponds to the lifts of edges of the triangulation $\mathcal{T}$ of $X$ meeting at a puncture of $X$. This implies that to each vertex of $\mathcal{T}$, there corresponds a parabolic subgroup of the covering group $\Gamma$ of $X$,  whose set of fixed points is the vertex of the fan.  Given a vertex $p$, an edge $e_1$ with vertex $p$ and the primitive parabolic element $A$ of the covering group of $X$ with fixed point $p$, let $(e_1,e_2,\ldots ,e_n)$ be the set of adjacent edges with vertex $p$  between $e_1$ and $A(e_1)$ (not including $A(e_1)$). For each $s_{[f^{\mu}]}$ with $[f^{\mu}]\in T(X)$, we have 
$$
\sum_{j=1}^ns_{[f^{\mu}]}(e_j)=0.
$$
Moreover, there is an upper bound on all $n$ over all vertices of $\mathcal{F}$.

We prove that there is an extension of the map $s_{[f^{\mu}]}+U_{\epsilon_0}({\bf 0})\to T(X)$ from the real shear coordinates to the Teichm\"uller to a map from the complex shear coordinates of $s_{[f^{\mu}]}+V_{\epsilon_0}({\bf 0})$ to the QuasiFuchsian space $QF(X)$, and that the extension is holomorphic.

\begin{thm}
\label{thm:complex_shears-qf}
Let $[f^{\mu_0}]\in T(X)$ and $s_0=s_{[f^{\mu_0}]}\in \ell^{\infty}(\mathcal{E},\mathbb{R})$ be its shear coordinates. Then there exists $\epsilon_0=\epsilon_0([f^{\mu_0}])>0$ such that the inverse of the shear map extends to a holomorphic map
$$
s_{*}^{-1}:V_{\epsilon_0}({s_0})\to QF(X)
$$
of a complex neighborhood $V_{\epsilon_0}({s_0})=s_0+V_{\epsilon_0}({\bf 0})\subset QF(X)$. 
\end{thm}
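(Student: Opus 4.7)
The plan is to construct $s_*^{-1}$ by a bending/developing construction in $\mathbb{H}^3$, verify that the resulting family is a holomorphic motion of the vertex set $\mathbb{Q}$ of $\mathcal{F}$, extend to a holomorphic path in $QF(X)$ via Słodkowski's theorem, and finally upgrade one-variable holomorphicity along every complex disk in $V_{\epsilon_0}(s_0)$ to holomorphicity of $s_*^{-1}$ on this open set by boundedness plus weak holomorphicity (\cite[Theorem 14.9]{Chae}).

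I would fix a basepoint ideal triangle $\Delta_\star\subset\mathbb{H}^2\subset\mathbb{H}^3$ and, for each complex shear function $g\in V_{\epsilon_0}(\mathbf{0})$, develop $\mathcal{F}$ into $\mathbb{H}^3$ by prescribing that adjacent triangles sharing an edge $e$ be joined with complex shear $s_0(e)+g(e)$; concretely, crossing $e$ is realized by the loxodromic with axis along the image of $e$ and complex translation length $s_0(e)+g(e)$. Along any finite edge path in $\mathcal{F}$ the developed partial image is a M\"obius composition of loxodromics depending affinely, hence holomorphically, on $g$. To make sense of the developed endpoint $F_g(v)\in\hat{\mathbb{C}}$ of each vertex $v\in\mathbb{Q}$ I use the bounded-triangulation hypothesis: the fan at every puncture has at most $d$ edges on which $s_0$ sums to zero and $\|g\|_\infty<\epsilon_0$, so every partial fan sum of $s_0+g$ is uniformly bounded in the sense of (\ref{eq:qs-suff-cond}). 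Combining with the cancellation estimate of Lemma \ref{lem:diff-est-opposite} and the nested-halfspace Lemma \ref{lem:inclusion}, the fan development collapses geometrically to a single point $F_g(v)\in\hat{\mathbb{C}}$, uniformly in $g$; the construction is also $\Gamma$-equivariant via a representation $\rho_g\colon\Gamma\to PSL_2(\mathbb{C})$ depending holomorphically on $g$.

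The main obstacle is injectivity of $F_g$ on $\mathbb{Q}$, i.e.\ verifying that $\{F_g\}$ is a genuine holomorphic motion. For $g$ purely real, $s_0+g$ is a real shear function with zero fan sums and finite $\ell^\infty$-norm, hence lies in the image of the shear map by Theorem \ref{thm:Teich-param}, so $F_g$ is the restriction to $\mathbb{Q}$ of a quasisymmetric homeomorphism of $\hat{\mathbb{R}}$ and is injective. For general complex $g$ I would parametrize $g=\tau g_1$ with $g_1\in U_{\epsilon_0}(\mathbf{0})$ real and $\tau\in\mathbb{D}$: for each pair $v\ne w\in\mathbb{Q}$ the function $\tau\mapsto F_{\tau g_1}(v)-F_{\tau g_1}(w)$ is holomorphic on $\mathbb{D}$ and nonzero on $(-1,1)$ by the real case, so by a $\lambda$-lemma style argument its zero set is bounded away from $0$ by a definite amount. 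A uniform radius $\epsilon_0$ independent of $v,w$ works because Lemma \ref{lem:diff-est-opposite} gives fan-by-fan estimates whose constants depend only on the uniform bound $d$ on fan length. This is the infinite-surface analogue of the opposite-shear cancellation of \cite{Bonahon,Saric14}, made available by the zero-sum condition at every puncture.

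Once $\tau\mapsto F_{\tau g_1}|_\mathbb{Q}$ is known to be a holomorphic motion, Słodkowski's extension theorem promotes it to a holomorphic motion $\hat F_\tau$ of $\hat{\mathbb{C}}$ whose Beltrami coefficient is a holomorphic map $\mathbb{D}\to L^\infty(\mathbb{C})$ of norm $<1$. Since $\hat F_\tau|_{\hat{\mathbb{R}}}$ coincides with $F_{\tau g_1}$ on the dense subset $\mathbb{Q}$ and therefore inherits the $\rho_{\tau g_1}$-equivariance on $\hat{\mathbb{R}}$, a Bers simultaneous uniformization argument produces a $\Gamma$-equivariant quasiconformal representative on $\hat{\mathbb{C}}$ with the same boundary values and holomorphic $\tau$-dependence, locating $[\hat F_\tau]\in QF(X)$. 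Composing with the Bers/translation chart at $[f^{\mu_0}]$ from \S 5, $\tau\mapsto s_*^{-1}(s_0+\tau g_1)$ is holomorphic into $QF(X)$. Since the image of $V_{\epsilon_0}(s_0)$ is bounded (by Lemma \ref{lem:ext-U} applied on the output side) and this one-variable holomorphicity holds on every complex disk in $V_{\epsilon_0}(s_0)$, the criterion \cite[Theorem 14.9]{Chae} upgrades $s_*^{-1}\colon V_{\epsilon_0}(s_0)\to QF(X)$ to a holomorphic map of Banach manifolds.
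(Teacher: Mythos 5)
Your overall architecture matches the paper's: develop $\mathcal{F}$ into $\mathbb{H}^3$ by loxodromics with complex translation lengths, show the vertex images form a holomorphic motion, extend by Slodkowski to get a holomorphic family of Beltrami coefficients, and upgrade to Banach-space holomorphicity via boundedness plus weak holomorphicity from \cite[Theorem 14.9]{Chae}. The real-parameter case via Theorem \ref{thm:Teich-param}, the role of the zero fan sums, and the use of Lemma \ref{lem:diff-est-opposite} are all correctly identified.

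However, there is a genuine gap at the crux of the argument: injectivity of the developed map on $\mathbb{Q}$ for non-real parameters. You propose to fix a pair $v\neq w$, note that $\tau\mapsto F_{\tau g_1}(v)-F_{\tau g_1}(w)$ is holomorphic and nonvanishing on $(-1,1)$, and conclude by a ``$\lambda$-lemma style argument'' that its zero set is bounded away from $0$ uniformly in $v,w$. This does not work. First, the $\lambda$-lemma presupposes a holomorphic motion, i.e.\ injectivity for \emph{all} parameters in the disk; it cannot be used to establish that injectivity. Second, a holomorphic function nonvanishing on the real segment can have zeros arbitrarily close to $0$ off the axis (e.g.\ $\tau^2+\delta^2$), so nonvanishing on $(-1,1)$ gives no zero-free disk at all without quantitative input; and any Rouch\'e-type lower bound one could extract degenerates as $v\to w$, since $|F_0(v)-F_0(w)|\to 0$ while the $\tau$-derivative does not, so no radius $\epsilon_0$ uniform over all pairs can come out of this pairwise analysis. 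The paper's proof replaces this with a genuinely geometric separation argument: the geodesic from $x$ to $y$ is cut into unit arcs with division points $p_n$, one forms the nested half-spaces $C(p_n)\supset\overline{C(p_{n+1})}$, and the cancellation estimate $\|\mathcal{B}|_{\Delta_{p_{n+1}}}-I\|\leq Ck_0|\eta|$ (obtained by grouping the fan crossings into period-$d$ blocks with zero shear sum and applying Lemma \ref{lem:diff-est-opposite} block by block) combines with Lemmas \ref{lem:vect-norm} and \ref{lem:inclusion} to show the nesting persists after bending. The images of $x$ and $y$ then lie in the disjoint boundary disks of $C^+(p_0)$ and $C^-(p_0)$, which is a separation statement uniform over all pairs at once. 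Without this (or an equivalent uniform mechanism), your injectivity claim, and hence the holomorphic motion and everything downstream of it, is unsupported.
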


\begin{proof}
By \cite[Theorem 14.9, page 198]{Chae}, it is enough to prove that $s_*^{-1}$ is bounded on $V_{\epsilon_0}({s_0})$ and that $\zeta \mapsto (s_*)^{-1}(s_0+s_1'+\zeta s_1)$ is a holomorphic function of the variable $\zeta$ into $T(X)$ for all $\zeta\in\mathbb{C}$ with $\| s_1'+\zeta s_1\|_{\infty}<\epsilon_0$ and $s_1',s_1\in V_{\epsilon_0}({\bf 0})$. 

Given $s\in V_{\epsilon_0}({s_0})$, we first define the map on the vertices $\hat{\mathbb{Q}}=\mathbb{Q}\cup\{\infty\}$ of the Farey triangulation whose shears are  $s$. We start by normalizing the map to be the identity on the triangle $\Delta_0$ of $\mathcal{F}$ with vertices $0$, $1$, and $\infty$. Let $\Delta$ be another triangle of $\mathcal{F}$ and $l$ an oriented geodesic arc connecting $\Delta_0$ to $\Delta$. Let $\{ g_0,g_1,\ldots ,g_n\}$ be the set of all edges in $\mathcal{E}$ which separate $\Delta_0$ from $\Delta$ such that the point $g_i\cap l$ is before the point $g_{i+1}\cap l$ for the orientation of $l$ and for all $i=0,\ldots ,n-1$. We orient each $g_i$ such that $\Delta_0$ is to its left.

We write $s=s_0+ s_1'+\zeta s_1$, where $s_1,s_1'\in V_{\epsilon_0} ({\bf 0})$ and $\zeta\in\mathbb{C}$ with $\| s_1'+\zeta s_1\|_{\infty}<\epsilon_0$. On the vertices of the triangle $\Delta$, the map is given by the developing (or cocycle) map according to the complex shears $s$ (see \cite{EpsteinMarden}). Namely, the map is
\begin{equation}
\label{eq:cocycle}
T^{s(g_0)}_{g_0}\circ T^{s(g_1)}_{g_1}\circ\cdots\circ T^{s(g_n)}_{g_n} (z),
\end{equation}
where $T_{g_i}^{s(g_i)}(z)$ is as a loxodromic element of $PSL_2(\mathbb{C})$ with fixed points $r_i$ (the initial point of $g_i$) and $a_i$ (the endpoint of $g_i$), and the complex translation length $s(g_i)=s_0(g_i)+s_1'(g_i)+\zeta s_1(g_i)$.  
Note that
$$
\frac{T_{g_i}^{s(g_i)}(z)-a_i}{T_{g_i}^{s(g_i)}(z)-r_i}=e^{s_0(g_i)+s_1'(g_i)+\zeta s_1(g_i)}\frac{z-a_i}{z-r_i}
$$
which gives
$$
T_{g_i}^{s(g_i)}(z)=\frac{(r_i-a_ie^{s_0(g_i)+s_1'(g_i)+\zeta s_1(g_i)})z+a_ir_i(e^{s_0(g_i)+s_1'(g_i)+\zeta s_1(g_i)}-1)}{(1-e^{s_0(g_i)+s_1'(g_i)+\zeta s_1(g_i)})z+r_ie^{s_0(g_i)+s_1'(g_i)+\zeta s_1(g_i)}-a_i}.
$$
In particular, the composition $T^{s(g_0)}_{g_0}\circ T^{s(g_1)}_{g_1}\circ\cdots\circ T^{s(g_n)}_{g_n} (z)$ is holomorphic in $\zeta$ for a fixed $z\in\bar{\mathbb{C}}$ as long as $T^{s(g_0)}_{g_0}\circ T^{s(g_1)}_{g_1}\circ\cdots\circ T^{s(g_n)}_{g_n} (z)\neq \infty$. We will prove below that for all $z\in\mathbb{Q}$, $T^{s(g_0)}_{g_0}\circ T^{s(g_1)}_{g_1}\circ\cdots\circ T^{s(g_n)}_{g_n} (z)\neq \infty$ for $\epsilon_0>0$ small enough and for the corresponding $\zeta$ with $\| s_1'+\zeta s_1\|_{\infty}<\epsilon_0$ (where $s=s_0+s'_1+\zeta s_1$)  which will establish that the map is well-defined on $\hat{\mathbb{Q}}$ and holomorphic.

Our first observation is that 
\begin{equation}
\label{eq:conjugation}
(A\circ T_{g_i}^{s(g_i)}\circ A^{-1})(z)=T_{A(g_i)}^{ s(g_i)}(z),
\end{equation} 
where $A\in PSL_2(\mathbb{C})$ and $A(g_i)$ is the geodesic in $\mathbb{H}^3$ whose endpoints are $A(r_i)$ and $A(a_i)$. In other words, $T_{A(g_i)}^{ s(g_i)}(z)$ is the M\"obius map with fixed points $A(r_i)$ and $A(a_i)$, and the translation length $s(g_i)$. 

At this point we represent the shear function $s\in U_{\epsilon_0}(s_0)$ by $$s=s_0+\zeta s_1,$$ where $s_0,s_1\in s_*(T(X))$, $\| s_1\|_{\infty}=\epsilon_0$, and $\zeta=\xi +i\eta\in\mathbb{C}$ with $|\zeta |<1$. Then we have $s(g_i)=s_0(g_i)+\zeta s_1(g_i)=[s_0(g_i)+\xi s_1(g_i)]+i \eta s_1(g_i)$, where $s_0(g_i)+\xi s_1(g_i)$ and $\eta s_1(g_i)$ are real-valued. By (\ref{eq:conjugation}), the developing map in (\ref{eq:cocycle}) can be written as
\begin{equation}
\label{eq:real-imaginary}
\Big{[}T^{i\eta  s_1(g_0)}_{g_0'}\circ \cdots\circ T^{i\eta   s_1(g_n)}_{g_n'}\Big{]}\circ 
\Big{[}T^{ s_0(g_0)+\xi s_1(g_0)}_{g_0}\circ \cdots\circ T^{s_0(g_n)+\xi s_1(g_n)}_{g_n}\Big{]},
\end{equation}
where $g_i'=\Big{[}T^{ s_0(g_0)+\xi s_1(g_0)}_{g_0}\circ \cdots\circ T^{s_0(g_i)+\xi s_1(g_i)}_{g_i}\Big{]}(g_i)$. The formula (\ref{eq:real-imaginary}) decomposes the developing map from (\ref{eq:cocycle}) into the (real) shear map that preserves the real line and the pure (imaginary) bending part (see \cite{EpsteinMarden}). The real part of the developing map moves the support geodesics of the imaginary part.

By Theorem \ref{thm:Teich-param}, the (real) developing map corresponding to the real shears $s_0+\xi s_1$ is a quasisymmetric map since $s_0+\xi s_1\in \ell^{\infty}(\mathcal{E},\mathbb{R})$. We denote by $\mathcal{F}'$ the image of $\mathcal{F}$ under the real developing map. It remains to map with the purely imaginary developing map 
 $$
 \mathcal{B}=T^{i\eta   s_1(g_0)}_{g_0'}\circ \cdots\circ T^{i\eta  s_1(g_n)}_{g_n'}
 $$ 
 given by the purely imaginary shear coordinates starting from $\mathcal{F}'$.
  
We prove that $\mathcal{B}$ is injective on the vertices of $\mathcal{F}'$.
Indeed, let $x,y\in\mathbb{R}\subset \hat{\mathbb{C}}=\partial\mathbb{H}^3$ be two different points that are vertices of $\mathcal{F}'$. Let $h$ be a geodesic connecting $x$ and $y$ in $\mathbb{H}^2\subset \mathbb{H}^3$. If $h$ is an edge in $\mathcal{F}'$ then $\mathcal{B}|_h$ is an isometry of $\mathbb{H}^3$ and therefore $\mathcal{B}(x)\neq\mathcal{B}(y)$. 

The main case is when $h$ transversely intersects edges of $\mathcal{F}'$. Let $\Delta_0'$ be a triangle of $\mathcal{F}'$ that intersects $h$ and fixes a point $p\in h\cap\Delta_0'$. The point $p$ divides $h$ into two infinite rays, and we orient them to start at $p$.
Divide each ray of $h$ into arcs of length $1$ starting from point $p$. Denote the points of the division of $h$ by $\{ p_n\}_{n\in\mathbb{Z}}$ with $p_0=p$. The points $p_n$ and $p_{n+1}$ for $n\geq 0$ are adjacent and lie on one ray with indices increasing along the ray. The points $p_{n+1}$ and $p_n$ for $n\leq -1$ are adjacent and lie on the other ray with indices decreasing along the ray. 

For $n\neq 0$, let $C(p_n)$ be the union of the open half-space in $\mathbb{H}^3$ whose boundary is a geodesic plane orthogonal to $h$ passing through $p_n$ that contains the infinite part of the ray to which $p_n$ belongs to and the open disk on the boundary $\bar{\mathbb{C}}$ of the open half-space. Let $C^+(p_0)$ be the open half-space (together with the open disk on its boundary on $\bar{\mathbb{C}}$) whose boundary in $\mathbb{H}^3$ is the geodesic plane through $p_0$ orthogonal to $h$ containing all $p_n$ with $n>0$. Let $C^-(p_0)$ be the open half-space (together with the open disk on its boundary on $\bar{\mathbb{C}}$) whose boundary in $\mathbb{H}^3$ is the geodesic plane through $p_0$ orthogonal to $h$ containing all $p_n$ with $n<0$. 
Denote by $\overline{C(p_n)}$ the Euclidean closure of $C(p_n)$ in the model $\mathbb{H}^3\cup\bar{\mathbb{C}}$.

Then we have
$$
C^+(p_0)\supset \overline{C(p_1)},\ \ C(p_n)\supset \overline{C(p_{n+1})}
$$
for all $n\geq 1$, 
and
$$
C^-(p_0)\supset \overline{C(p_{-1})},\ \  C(p_n)\supset \overline{C(p_{n-1})}
$$
for all $n\leq -1$. 
We will prove that the above inclusions remain in force after applying the developing map $\mathcal{B}$ when $\epsilon_0>0$ is small enough.  This implies $\mathcal{B}(x)\neq \mathcal{B}(y)$ since the (open) boundary disks of $C^+(p_0)$ and $C^-(p_0)$ are disjoint, and the nesting property guarantees that $\mathcal{B}(x)$ is in the boundary disk of $C^+(p_0)$ and $\mathcal{B}(y)$ is in the boundary disk of $C^-(p_0)$. 

It is enough to prove that $\mathcal{B}(C(p_n))\supset\overline{\mathcal{B}(C(p_{n+1}))}$ for all $n>0$ and the statement for $n<0$ and $n=0$ follows by the same reasoning.
By pre-composing and post-composing the developing map $\mathcal{B}$ with hyperbolic isometries, we can assume that $p_n=(0,0,1)\in\mathbb{H}^3$, $h$ is geodesic orthogonal to the $z$-axis with endpoints $(1,0,0)$ and $(-1,0,0)$ on the $x$-axis, $p_{n+1}$ has positive $x$-coordinate and $\mathcal{B}$ is normalized such that it equals to the identity on the triangle of $\mathcal{F}'$ that contains $p_n$. Then $C(p_n)=\mathcal{B}(C(p_n))$ is the half-space of $\mathbb{H}^3$ whose boundary is the geodesic plane which is the Euclidean half-plane above the $y$-axis and whose points have positive $x$-coordinates. 

The restriction of $\mathcal{B}$ to the triangle that contains $p_{n+1}$ is a M\"obius map $B_{n+1}\in PSL_2(\mathbb{C})$. Let $v_{n+1}$ be the unit tangent vector to $h$ at $p_{n+1}$ in the direction of the ray containing $p_{n+1}$. 
By Lemma \ref{lem:vect-norm}, the distance from $(p_{n+1},v_{n+1})$ to $(\mathcal{B}(p_{n+1}),d\mathcal{B}(v_{n+1}))$ as elements of the unit tangent bundle $T^1\mathbb{H}^3$ is bounded above by $D\cdot\| B_{n+1}-I\|$, where $I$ is the identity and $D>0$ is a constant. By Lemma \ref{lem:inclusion}, there is $\delta >0$ such that $\overline{C(q_{n+1},w_{n+1})}\subset C(p_n,v_n)$ if $d_{T^1\mathbb{H}^3}((q_{n+1},w_{n+1}),(p_{n+1},v_{n+1}))<\delta$. 

Therefore, it remains to prove that $\| B_{n+1}-I\|$ can be made arbitrarily small for $\epsilon_0>0$ small enough. The bound on $\| B_{n+1}-I\|$ follows by a geometric property of cancellations in the long compositions in the definition of the developing map for shears with bounded variations. This general idea goes back to papers \cite{Bonahon,Saric13,Saric14}, but the geometric situation and the estimates are new.

Depending on the relative position, the part of the geodesic between $p_n$ and $p_{n+1}$ can intersect many edges of $\mathcal{F}'$. There is no upper bound on the number of edges it can intersect, even though its length is $1$. This phenomenon happens because the distance between edges in the same fan of $\mathcal{F}'$ is zero. Let $\{ g_1',\ldots ,g_j'\}$ be the set of consecutive edges of $\mathcal{F}'$ that the geodesic arc between $p_n$ and $p_{n+1}$ intersects. We group consecutive edges into fans as follows. Namely, $\{ g_1',\ldots ,g_{j_1}'\}$ is the first group which belongs to fan with vertex $q_1$, then $\{ g_{j_1}',\ldots ,g_{j_2}'\}$ is the second group which belongs to fan with vertex $q_2$ with $q_2\neq q_1$, and so on. Note that two consecutive groups share an edge; the first two groups share $g_{j_1}'$. If we have three consecutive groups of geodesics belonging to different fans, then the distance between the last geodesic in the first group and the first geodesic in the third group is bounded below by a positive constant. To see this, note that the distance for the Farey triangulation is at least the distance between two opposite sides of an ideal rectangle obtained by the union of two adjacent triangles. Since the shears for $\mathcal{F}$ are all zero, this distance is the same for any two edges in $\mathcal{F}$. Since $\mathcal{F}'$ is the image of $\mathcal{F}$ under a quasisymmetric map, this distance in $\mathcal{F}'$ is bounded below by a positive constant (see \cite[page 81]{LV}, \cite{GardinerLakic}). 

Since there is a lower bound on the distance between edges in $\{ g_1',\ldots ,g_j'\}$ that do not belong to adjacent fans and the distance between $p_n$ and $p_{n+1}$ is $1$, it follows that the geodesic arc $[p_n,p_{n+1}]$ between $p_n$ and $p_{n+1}$ can intersect at most a fixed number $k_0$ of different fans. The number $k_0$ is uniform over all pairs of vertices $x,y$ on $\mathcal{F}'$ and over all geodesic arcs $[p_n,p_{n+1}]$, while the total number $j$ of geodesics that $[p_n,p_{n+1}]$ intersects goes to infinity. 

The developing map $\mathcal{B}$ is a composition of at most $k_0$ developing maps corresponding to non-zero shears in a single fan. We estimate the norm of the developing map for each fan separately. We consider the developing map
$$
T^{i\eta   s_1(g_1)}_{g_1'}\circ \cdots\circ T^{i\eta  s_1(g_{j_1})}_{g_{j_1}'}
$$
corresponding to the adjacent edges of the fan with vertex $q_1$. Denote by $\langle A_{q_1}\rangle$ the parabolic M\"obius subgroup  of the Fuchsian covering group $\Gamma_X$ of $X$ that fixes $q_1$. Then $\langle A_{q_1}\rangle$ identifies the edges of the fan associated with $q_1$. We recall that the number of different orbits under $\langle A_{q_1}\rangle $ of the edges in the fan associated with $q_1$ is finite (since the triangulation of $X$ has an upper bound of edges at each puncture), and there is an upper bound on the number of different orbits for all fans.

We partition the set $\{ g_1',\ldots ,g_{j_1}'\}$ into groups of adjacent edges such that the image under the primitive parabolic element $A_{q_1}$ maps the first element of one group to the first element of the following group plus the tail group which has no following group. In other words, there are $m,j,l>0$ and $e\geq 0$ such that $m$ groups of adjacent edges $\{ g_{k,1}',\ldots ,g_{k,j}'\}_{k=1}^m$ have property $A_{q_1}(g_{k,1}')=A_{q_1}(g_{k+1,1}')$ for $k=1,\ldots ,m-1$; followed by $\{ g_{m+1,1}',\ldots ,g_{m+1,e}'\}$ with $e< j$ and $A_{q_1}(g_{m,1}')=A_{q_1}(g_{m+1,1}')$ if $e>0$. We also recall that $j$ has an upper bound when considering all fans.

We first find an estimate for 
\begin{equation}
\label{eq:develop-fan}
T^{i\eta  s_1(g_{k,1})}_{g_{k,1}'}\circ T^{i\eta   s_1(g_{k,2})}_{g_{k,2}'} \circ T^{i\eta  s_1(g_{k,3})}_{g_{k,3}'}\circ \cdots\circ T^{i\eta   s_1(g_{k,j-1})}_{g_{k,j-1}'}\circ T^{i\eta  s_1(g_{k,j})}_{g_{k,j}'} .
\end{equation}
Note that 
\begin{equation}
\label{eq:sum-shears}
s_1(g_{k,1})+\cdots +s_1(g_{k,j})=0,
\end{equation} 
the geodesics $g_{k,1}',\ldots ,g_{k,j}'$ have a common endpoint $q_k$ and $|\eta |<1$ by the assumption. 

Lemma \ref{lem:diff-est-opposite} states that given a closed geodesic arc $a$ in $\mathbb{H}^3$, there exists a constant $C>0$ such that for any $\epsilon >0$ and any two geodesics $g$ and $h$ of $\mathbb{H}^3$ that intersect $a$ and have a common endpoint, we have
\begin{equation}
\label{eq:cancellations}
\| T_g^{\epsilon}\circ T_h^{-\epsilon}-I\| \leq C |a|\epsilon ,
\end{equation}
where $|a|$ is the hyperbolic length of $a$.

We rewrite the composition (\ref{eq:develop-fan}) by adding extra terms such that it becomes a composition of pieces of the form in (\ref{eq:cancellations}). Namely, we replace $T^{i\eta   s_1(g_{k,1})}_{g_{k,1}'}\circ T^{i\eta  s_1(g_{k,2})}_{g_{k,2}'}$ with
$$
T^{i\eta  s_1(g_{k,1})}_{g_{k,1}'}\circ T^{-i\eta  s_1(g_{k,1})}_{g_{k,2}'}\circ  T^{i\eta  [s_1(g_{k,1})+s_1(g_{k,2})]}_{g_{k,2}'} .
$$
Then we replace $T^{i\eta  [s_1(g_{k,1})+s_1(g_{k,2})]}_{g_{k,2}'}\circ T^{i\eta  s_1(g_{k,3})}_{g_{k,3}'}$ with
$$
T^{i\eta  [s_1(g_{k,1})+s_1(g_{k,2})]}_{g_{k,2}'}\circ T^{-i\eta  [s_1(g_{k,1})+s_1(g_{k,2})]}_{g_{k,3}'}\circ T^{i\eta  [s_1(g_{k,1})+s_1(g_{k,2})+s_1(g_{k,3})]}_{g_{k,3}'} .
$$
Continuing in this fashion, in the last step, we obtain
$$
T^{i\eta [s_1(g_{k,1})+s_1(g_{k,2})+\cdots +s_1(g_{k,j-1})]}_{g_{k,j-1}'}\circ T^{i\eta  s_1(g_{k,j})}_{g_{k,j}'} 
$$
which by (\ref{eq:sum-shears}) equals to
$$
T^{-i\eta s_1(g_{k,j})}_{g_{k,j-1}'}\circ T^{i\eta  s_1(g_{k,j})}_{g_{k,j}'} .
$$
Therefore, the composition in (\ref{eq:develop-fan}) is rewritten as a composition of $j$ pairs of rotations by angles with opposite signs. Denote by 
\begin{equation}
\label{eq:pairs} 
\begin{split}B_1=T^{i\eta  s_1(g_{k,1})}_{g_{k,1}'}\circ T^{-i\eta  s_1(g_{k,1})}_{g_{k,2}'},B_2=T^{i\eta  [s_1(g_{k,1})+s_1(g_{k,2})]}_{g_{k,2}'}\circ T^{-i\eta  [s_1(g_{k,1})+s_1(g_{k,2})]}_{g_{k,3}'},\\ \ldots ,B_j=T^{-i\eta s_1(g_{k,j})}_{g_{k,j-1}'}\circ T^{i\eta  s_1(g_{k,j})}_{g_{k,j}'} 
\end{split}
\end{equation} 
the composition of these pairs of rotations by angles with opposite signs. The maximum of the absolute value of the angles is at most $ (j\| s_1\|_{\infty})|\eta |$. Note that $j\| s_1\|_{\infty}$ is a fixed constant for the whole triangulation and the neighborhood $V_{\epsilon_0}({s_0})$.

By (\ref{eq:cancellations}), for all $u=1,2,\ldots ,j$, we obtain
\begin{equation}
\label{eq:est-B}
\| B_u-I\|\leq C d_u  |\eta |,
\end{equation}
where $C$ is a universal constant and $d_u$ is the length of the piece of the geodesic arc $[p_n,p_{n+1}]$ between $g_{k,u-1}'$ and $g_{k,u}'$.

By (\ref{eq:est-B}), we obtain
$$
\log \| B_1\circ B_2\circ\cdots\circ B_j\|\leq\sum_{u=1}^j \log\|B_u\|\leq\sum_{u=1}^j\log (1+C d_u |\eta |)\leq C(\sum_{u=1}^j d_u)|\eta |.
$$
Recall that we partitioned edges $\{ g_1,\ldots ,g_{j_1}\}$ into $m$ groups and each group satisfies the above estimate. The number of groups $m$ is not bounded. However, by adding over all groups, we obtain that the norm of the composition is bounded since the above estimate is a multiple of the length of the part of $[p_n,p_{n+1}]$ between the edges. The length of $[p_n,p_{n+1}]$ is $1$ which gives the uniform bound. The last group $\{ g_{m+1,1},\ldots ,g_{m+1,e}\}$ with $e< j$ has finitely many elements, and the composition for the developing map of this group is bounded by a constant because each element is a rotation.
Moreover, the geodesic arc $[p_n,p_{n+1}]$ intersects at most finitely many fans, and again, the norm of the composition is bounded by a constant $C$ (uniformly for all geodesics and all arcs $[p_n,p_{n+1}]$).

We prove that the composition in (\ref{eq:develop-fan}) is close to the identity. Indeed, note that
\begin{equation}
\label{eq:composition-group}
\begin{split}
\| B_1\circ B_2\circ\cdots \circ B_j-I\|\leq \| B_1\circ B_2\circ\cdots \circ B_j-B_1\circ B_2\circ\cdots \circ B_{j-1}\| +\\
\| B_1\circ B_2\circ\cdots \circ B_{j-1}-B_1\circ B_2\circ\cdots \circ B_{j-2}\|+\cdots +\| B_1-I\|.
\end{split}
\end{equation}
By (\ref{eq:est-B}) and the above bound, we have
\begin{equation}
\label{eq:composition-group-inter}
\begin{split}
\| B_1\circ B_2\circ\cdots \circ B_{h}-B_1\circ B_2\circ\cdots \circ B_{h-1}\|\leq \\ \| B_1\circ B_2\circ\cdots \circ B_{h-1}\|\cdot\| B_{h}-I\|\leq C d_h|\eta |.
\end{split}
\end{equation}
By (\ref{eq:composition-group}) and (\ref{eq:composition-group-inter}), we have
\begin{equation}
\label{eq:composition-group-main}
\| B_1\circ B_2\circ\cdots \circ B_j-I\|\leq C|\eta |.
\end{equation}

We are ready to prove that the developing map is close to the identity when $\epsilon_0>0$ is small enough. Let $\Delta_{p_n}$ and $\Delta_{p_{n+1}}$ be the triangles that contain $p_n$ and $p_{n+1}$, respectively. Then $\mathcal{B}|_{\Delta_{p_n}}=I$ and $\mathcal{B}|_{\Delta_{p_{n+1}}}$ is the composition of the finitely many groups (at most $k_0$) rotations (whose support geodesics belong to a single fan) as above.

Using the triangle inequality, (\ref{eq:composition-group-main}) and the fact that the composition of rotations for each group has uniformly bounded  norm, it follows that
\begin{equation}
\label{eq:est-develop-main}
\| \mathcal{B}|_{\Delta_{p_{n+1}}}-I\|\leq Ck_0|\eta |.
\end{equation}
Since $k_0$ is an upper bound on the number of groups of geodesics over all geodesic arcs $[p_n,p_{n+1}]$ and over all $x,y\in\bar{\mathbb{R}}\subset\bar{\mathbb{C}}$, and $|\eta |<\epsilon_0$, it follows that $\| \mathcal{B}|_{\Delta_{p_{n+1}}}-I\|$ can be made small by choosing $\epsilon_0>0$ small enough. Therefore
$$
\mathcal{B}(x)\neq\mathcal{B}(y)
$$
when $x\neq y$. 

Since $\mathcal{B}(\infty )=\infty$, it follows that $T^{s(g_0)}_{g_0}\circ T^{s(g_1)}_{g_1}\circ\cdots\circ T^{s(g_n)}_{g_n} (z)=\mathcal{B}(z)\neq\infty$ for $z\in\mathbb{R}$. Therefore $\zeta\mapsto \mathcal{B}(z)$ is holomorphic in $\zeta$ for each $z\in\mathbb{R}$. Let $\mathcal{B}=\mathcal{B}''\circ\mathcal{B}'$, where $\mathcal{B}'$ is the developing map for the complex shears $s_0+s_1'$. By the above, the developing map $\mathcal{B'}$ is injective on $\mathbb{Q}$ and the image $\mathcal{B}'(\mathbb{Q})$ is a subset of $\mathbb{C}$. The map $\mathcal{B}''$ depends on the complex parameter $\zeta$ and we established that for each $z\in\mathbb{Q}$ the map $\zeta\to\mathcal{B}(z)$ is holomorphic in $\zeta$. We note that (\ref{eq:conjugation}) implies that $\zeta\to\mathcal{B}''(z)$ is also holomorphic.

The map $\mathcal{B}''$ is injective on $\mathcal{B}'(\mathbb{Q})$ because $\mathcal{B}$ is injective on $\mathbb{Q}$. Moreover, the map $\mathcal{B}''$ is the identity on $\mathcal{B}'(\mathbb{Q})$ when $\zeta =0$. Therefore, the map $\mathcal{B}''$ is a holomorphic motion of $\mathcal{B}'(\mathbb{Q})$ in the parameter $\zeta$. It extends to the holomorphic motion of the closure of $\mathcal{B}'(\mathbb{Q})$ by the lambda lemma of Man\`e-Sad-Sullivan \cite{MSS}. By Slodkowski's theorem \cite{Slodkowski},  $\mathcal{B}''$ can be extended to a holomorphic motion of  $\mathbb{C}$ that conjugates Fuchsian group $\Gamma_X$ to other Fuchsian groups for each parameter $\zeta$ (see \cite{EKK}). Therefore, the map
$(\zeta ,z)\mapsto \mathcal{B}(z)$ is a holomorphic motion of $\mathbb{C}$ and the induced map that send $\zeta$ to the Beltrami coefficient of the extension of $\mathcal{B}|_{\mathbb{R}}$ is holomorphic map into $L^{\infty}(\mathbb{C})$. The projection to $T(X)$ is also holomorphic and we proved that the map $\zeta \mapsto (s_*)^{-1}(s_0+s_1'+\zeta s_1)$ is holomorphic.
\end{proof}

\section{The mapping class groups and triangulations}

Let $X$ be a Riemann surface with a bounded ideal hyperbolic triangulation $\mathcal{T}$. 
We consider the action of an element $[f]$ of the big mapping class group $MCG(X)$ on the triangulation $\mathcal{T}$ of $X$. 

Since $X$ has a bounded ideal hyperbolic triangulation, it is quasiconformal to a Riemann surface with an ideal triangulation whose all shears are zero (see Theorem \ref{thm:Teich-param}). The lift to $\mathbb{H}^2$
 of a triangulation of an infinite surface with all zero shears does not accumulate in $\mathbb{H}^2$. Therefore, the Fuchsian covering group of $X$ is of the first kind. By \cite[Theorem 3.6]{Saric23}, a homeomorphism $f:X\to X$ lifts to a homeomorphism of $\mathbb{H}^2$ that extends to a homeomorphism of the extended real line $\hat{\mathbb{R}}$.  We prove
 
 \begin{thm}
 \label{thm:qmcg-triangulations}
 Let $X$ be a Riemann surface with a bounded ideal hyperbolic triangulation. Then a homeomorphism $f:X\to X$ is homotopic to a quasiconformal map if and only if the triangulations $\mathcal{T}$ and $\mathcal{T}_{[f]}$ have bounded intersections, where $\mathcal{T}_{[f]}$ is found by replacing each edge of the image triangulation $f(\mathcal{T})$ with an ideal hyperbolic geodesic homotopic to it modulo its ideal endpoints that are at the punctures of $X$.
 \end{thm}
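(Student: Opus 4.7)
The plan is to reduce to the Farey model provided in Section~\ref{sec:examples} and then translate the bounded-intersection hypothesis into the Parlier--Whitney sufficient condition~(\ref{eq:qs-suff-cond}) for quasisymmetry. First I would apply the zero-shear reduction of Section~\ref{sec:examples} to replace $X$ by the quasiconformally equivalent surface $X_0=\mathbb{H}^2/\Gamma$ with $\Gamma<PSL_2(\mathbb{Z})$, on which the image of $\mathcal{T}$ is the Farey triangulation $\mathcal F$. The homeomorphism $f$ lifts to $\tilde f\colon\mathbb{H}^2\to\mathbb{H}^2$ conjugating $\Gamma$ to itself, and by the first-kind property $\tilde f$ extends to a homeomorphism $h\colon\hat{\mathbb R}\to\hat{\mathbb R}$ whose quasisymmetry is equivalent to $f$ being homotopic to a quasiconformal map. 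The lifted triangulation $\tilde{\mathcal T}_{[f]}$ is the geodesic tessellation with vertex set $h(\hat{\mathbb Q})$ and the combinatorial structure of $\mathcal F$; by $\Gamma$-equivariance, the bounded-intersection hypothesis on $X$ is equivalent to a uniform bound $M$ on the mutual edge intersections of $\mathcal F$ and $\tilde{\mathcal T}_{[f]}$ in $\mathbb{H}^2$.

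For $(\Rightarrow)$, assuming $h$ is quasisymmetric, Theorem~\ref{thm:Teich-param} gives $\|s_h\|_\infty<\infty$, so $\mathcal T_{[f]}$ is itself a bounded ideal triangulation of $X$ and $\tilde f$ extends to a quasiconformal self-map of $\mathbb{H}^2$ that is a quasi-isometry off horoball neighborhoods of the cusps. Hence $\tilde f(\mathcal F)$ lies within bounded Hausdorff distance of $\tilde{\mathcal T}_{[f]}$, and both triangulations have bounded valence at every ideal vertex and uniformly non-degenerate triangles outside the horoballs. A compactness argument in a fundamental region for $\Gamma$, combined with the bounded valence to control the cusp contributions, produces a uniform constant $M$ bounding the edge intersection number in both directions.

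For $(\Leftarrow)$, assume the intersection bound $M$. By~(\ref{eq:qs-suff-cond}) it suffices to show $\bigl|\sum_{j=m}^{m+k}s_h(e_j)\bigr|\leq C$ on every fan of $\mathcal F$. After a M\"obius normalization sending the fan vertex to $\infty$ and its edges to vertical rays above integers $\{v_n=n\}$, a telescoping cross-ratio computation yields
\[
\sum_{j=m}^{m+k}s_h(e_j)=\log\frac{h(m+k+1)-h(m+k)}{h(m)-h(m-1)}.
\]
Since $h$ conjugates the parabolic generator $z\mapsto z+d_p$ of the fan puncture (with $d_p$ bounded by the global valence $d$) to itself, the differences $\Delta_n:=h(n+1)-h(n)$ are $d_p$-periodic in $n$, so the fan sum on any single fan is bounded by $\log(\max_n\Delta_n/\min_n\Delta_n)$. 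The main obstacle is the uniform bound across all fans: if $\max\Delta/\min\Delta$ were unbounded over cusps, the wildly uneven gaps among the $h(v_j)$ would allow one to exhibit a Farey edge with endpoints in $\hat{\mathbb Q}$ that straddles many consecutive $h(v_j)$ in the compressed region, producing an intersection with $\tilde{\mathcal T}_{[f]}$ exceeding $M$. The technical difficulty is that Farey endpoints lie in $\hat{\mathbb Q}$ rather than in $h(\hat{\mathbb Q})$, so the witnessing edge must be built from Farey approximants of the critical points using $\Gamma$-equivariance and the bounded valence to produce a single constant $C=C(M,d)$ valid for all fans; then~(\ref{eq:qs-suff-cond}) delivers the quasisymmetry of $h$.
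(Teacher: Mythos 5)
Your first paragraph reproduces the paper's reduction exactly: pass to the zero-shear model so that $\mathcal{T}$ lifts to the Farey tessellation $\mathcal F$, note that $\tilde f$ extends to a circle homeomorphism $h$ preserving the vertex set, and observe that $f$ is homotopic to a quasiconformal map iff $h$ is quasisymmetric, while $\Gamma$-equivariance translates the intersection bound upstairs. At that point the paper stops: it invokes Parlier--\v Sari\'c \cite[Theorem 1.2]{PS}, which states precisely that a circle homeomorphism preserving the Farey vertices is quasisymmetric if and only if $\mathcal F$ and its (straightened) image have uniformly finite intersections. You instead set out to reprove that cited theorem, and this is where the proposal has a genuine gap rather than a complete alternative route.

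In the $(\Leftarrow)$ direction, your telescoping identity and the $d_p$-periodicity of the gaps $\Delta_n=h(v_{n+1})-h(v_n)$ are correct, and they do bound the fan sum on each individual fan by $\log(\max_n\Delta_n/\min_n\Delta_n)$ over one period. But the sufficient condition (\ref{eq:qs-suff-cond}) requires a single constant $C$ valid over \emph{all} fans, and converting the hypothesis ``each edge meets at most $M$ edges of the other tessellation'' into a uniform bound on these gap ratios is exactly the substantive content of \cite[Theorem 1.2]{PS}. You name this step (``the witnessing edge must be built from Farey approximants of the critical points'') but do not carry it out, so the implication you actually need is asserted, not proved. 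In the $(\Rightarrow)$ direction the sketch is also shaky: a ``compactness argument in a fundamental region for $\Gamma$'' is not available, since $X$ is of infinite type and $\Gamma$ has a non-compact, infinite-area fundamental domain; the correct uniformization comes from the transitivity of $PSL_2(\mathbb{Z})$ on oriented edges of $\mathcal F$ together with compactness of normalized $K$-quasisymmetric maps, not from any fundamental domain for $\Gamma$. Either fill in both implications in detail or do what the paper does and cite \cite[Theorem 1.2]{PS}; as written, the core equivalence is left unestablished.
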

 
 \begin{proof}
 We lift $f:X\to X$ to the universal covering $\tilde{f}:\mathbb{H}^2 \to\mathbb{H}^2$. The lift extends $\tilde{f}$ extends to a homemorphism of $\hat{\mathbb{R}}$ that setwise preserves the endpoints of the lift $\tilde{\mathcal{T}}$ of the triangulation $\mathcal{T}$ because the endpoints correspond to punctures.
 
  Up to conjugation by a quasiconformal map, we can assume that $\mathcal{T}$ has all shears zero. Therefore, $\mathcal{T}$ lifts to the Farey triangulation $\mathcal{F}$ of $\mathbb{H}^2$ and $\tilde{f}$ preserves the endpoints of $\mathcal{F}$. By \cite[Theorem 1.2]{PS}, the homeomorphism $\tilde{f}:\hat{\mathbb{R}}\to\hat{\mathbb{R}}$ is quasisymmetric if and only if $\mathcal{F}$ and $\tilde{f}(\mathcal{F})$ have finite intersections. If a conjugate of a homeomorphism by a quasisymmetric map is quasisymmetric then the homeomorphism is also quasisymmetric. The theorem follows.
 \end{proof}

\end{document}